\documentclass[12pt]{amsproc}

\usepackage[a4paper, margin=1in]{geometry}
\usepackage{amsmath,amssymb}
\usepackage{bbm}
\usepackage{tikz}
\usepackage{hyperref}

\numberwithin{equation}{section}

\newtheorem{thm}{Theorem}[section]%
\newtheorem{lem}[thm]{Lemma}%
\newtheorem{cor}[thm]{Corollary}%
\newtheorem{defi}[thm]{Definition}%
\newtheorem{pro}[thm]{Proposition}%
\newtheorem{rem}[thm]{Remark}%

\newtheorem*{thm A}{Theorem A}
\newtheorem*{thm B}{Theorem B}

\begin{document}

	\title[Some classifications of finite-dimensional Hopf algebras over $H_{b:x^2y}$]{Some classifications of finite-dimensional Hopf algebras over the Hopf algebra $H_{b:x^2y}$ of Kashina }

	\author[Y.H. Wu]{Yihan Wu}
	\address{School of Mathematical Sciences, MOE Key Laboratory of Mathematics and Engineering Applications \& Shanghai Key Laboratory of PMMP, East China Normal University, Shanghai 200241, China}
	\address{Department of Mathematics and Statistics, University of Hasselt, Diepenbeek 3590, Belgium}
	\email{52265500002@stu.ecnu.edu.cn}
	
	\author[H.Y. Wang]{Hengyi Wang}
	\address{School of Mathematical Sciences, MOE Key Laboratory of Mathematics and Engineering Applications \& Shanghai Key Laboratory of PMMP, East China Normal University, Shanghai 200241, China}
	\address{UFR Mathématique de Université Paris Cité, CNRS, IMJ-PRG, Bâtiment Sophie Germain – 8 place Aurélie Nemours, 75013 Paris, France}
	\email{52265500001@stu.ecnu.edu.cn}
	
	\author[N.H. Hu]{Naihong Hu$^*$}
	\address{School of Mathematical Sciences, MOE Key Laboratory of Mathematics and Engineering Applications \& Shanghai Key Laboratory of PMMP, East China Normal University, Shanghai 200241, China}
	\email{nhhu@math.ecnu.edu.cn}
	
	\thanks{
		This work is supported by the NNSF of China (Grant No. 12171155), and in part by the Science and Technology Commission of Shanghai Municipality (Grant No. 22DZ2229014).
	}
	
	\subjclass[2020]{Primary 16T05; Secondary 16T99}
	\date{}
	
	\maketitle
	
	\begin{abstract}
		Let $H$ be the $16$-dimensional nontrivial
		(namely, noncommutative and noncocommutative) semisimple Hopf
		algebra $H_{b:x^2y}$ classified by  Kashina. We figure out
		all simple Yetter-Drinfeld $H$-modules,  and then determine all
		finite-dimensional Nichols algebras satisfying the constraint condition $\mathcal{B}(V)\cong
		\bigotimes_{i\in I}\mathcal{B}(V_i)$, where $V=\bigoplus_{i\in
			I}V_i$, each $V_i$ is a simple object in $_H^H\mathcal{YD}$.
		Finally, we describe some  liftings of  the corresponding  Radford biproducts
		$\mathcal{B}(V)\sharp H$, which provide some classifications of finite dimensional Hopf algebras with $H$ as their coradical.
	\end{abstract}
	
		 \normalsize
	\section{Introduction}	
	\noindent\textbf{1.1.}
	Let $\mathbb{K}$ be an algebraically closed field of characteristic
	zero.In 1975, Kaplansky \cite{K} proposed ten conjectures, one of which concerns the number of isomorphism classes of finite-dimensional Hopf algebras over $\mathbb{K}$.
	So far, although numerous classification results have been obtained, there is still no unified standard method. A notable exception is the lifting method proposed by Andruskiewitsch and Schneider \cite{AS} in 1998, which has since played a central role in the classification of finite-dimensional (co-)pointed Hopf algebras. (eg. \cite{AS2}, \cite{AS07},	\cite{AHS}, \cite{AS3},	\cite{AV},  \cite{AI}, \cite{FG},	\cite{H06} etc.).
	
	Actually, the lifting method works well for those classes of
	finite-dimensional Hopf algebras with Radford biproduct structure
	or their deformations by certain Hopf $2$-cocycles. Let $A$ be a
	Hopf algebra. If the coradical $A_0$ is a Hopf subalgebra, then the
	coradical filtration $\{A_n\}_{n\geq 0}$ is a Hopf algebra
	filtration, and the associated graded coalgebra
	$grA=\bigoplus_{n\geq 0}A_n/A_{n-1}$ is also a Hopf algebra, where
	$A_{-1}=0$. Let $\pi:gr A\rightarrow A_0$ be the homogeneous
	projection. By a theorem of Radford~\cite{R85}, there exists a
	unique connected graded braided Hopf algebra $R=\bigoplus_{n\geq
		0}R(n)$ in the monoidal  braided category $^{A_0}_{A_0}\mathcal{YD}$ such
	that $grA\cong R\sharp A_0$. We call $R$ or $R(1)$ the diagram or
	infinitesimal braiding of $A$, respectively. Moreover, $R$ is
	strictly graded, that is, $R(0)=\mathbb{K}, ~R(1)=\mathcal{P}(R)$
	(see Definition 2.1 \cite{AS2}).
	
	\smallskip\noindent\textbf{1.2.}
	The lifting method has been applied to classify some
	finite-dimensional pointed Hopf algebras such as
	\cite{AFG10},  \cite{AFG11}, \cite{ACG15a},\cite{ACG15b},  \cite{FG}, \cite{GGI},
	etc., and copointed Hopf algebras \cite{AV},~\cite{GIV}, etc.
	Nevertheless, there are a few classification results on
	finite-dimensional Hopf algebras whose coradical is neither a group
	algebra nor the dual of a group algebra, for instance, \cite{AGM15},
	\cite{CS}, \cite{GG16}, \cite{HX16}, \cite{HX18}, \cite{S16},
	\cite{X19},	\cite{X192}, \cite{X21}, \cite{X23}	\cite{X232}, \cite{X24}  etc. In fact, Shi began a program in \cite{S16} to
	classify the objects of finite-dimensional growth from a given
	nontrivial $8$-dimensional semisimple Hopf algebra $A_0=H_8$ (Kac-Paljutkin algebra) via some relevant
	Nichols algebras $\mathcal B(V)$ derived from its semisimple
	Yetter-Drinfeld modules $V\in {^{A_0}_{A_0}\mathcal{YD}}$. On the other hand, Kashina  \cite {K00} classified the $16$-dimensional semisimple Hopf algebras, which correspond to $16$ isoclasses, i.e.,
	\begin{equation*}
		\begin{split}
		H_{a:1},\ H_{a:y},\ H_{b:1},\ (H_{b:1})^*,\ H_{b:y},\ H_{b:x^2y},\ H_{B:1},\ H_{B:X}, \\
		H_{c:\sigma_0},\ H_{c:\sigma_1},\ (H_{c:\sigma_1})^*,\ H_{C:1},\ H_{C:\sigma_1},\ H_{d:-1,1},\ H_{d:1,1},\ H_E.
		\end{split}
	\end{equation*}
	Subsequently, Zheng, Gao, Hu, et al. carried out  similar classifications of finite dimensional Hopf algebras with the  coradicals arising from  some of  $16$-dimensional Kashina semisimple Hopf algebras: $H_{b:1}$ (\cite{Z211}), $H_{d:-1,1}$ (\cite{Z212}), $(H_{b:1})^*$ (\cite{Z23}), $H_{a:y}$ (\cite{Z25}) respectively. This paper  aims to classify some finite dimensional Hopf algebras over the Kashina semisimple Hopf algebra $H_{b:x^2y}$.

	The classification of finite-dimensional Hopf algebras $A$ whose
	coradical $A_0$ is a Hopf subalgebra proceeds through the following main  three steps:
	
	\begin{enumerate}
		\item[(a)] Determine those Yetter-Drinfeld simple or semisimple $A_0$-modules $V$ in $^{A_0}_{A_0}\mathcal{YD}$ such that the corresponding Nichols algebra $\mathcal{B}(V)$ is finite-dimensional and present $\mathcal{B}(V)$ by generators and relations.
		
		\item[(b)] If $R=\bigoplus_{n\geq 0}R(n)$ is a finite-dimensional Hopf algebra in $^{A_0}_{A_0}\mathcal{YD}$ with $V=R(1)$, decide if $R\cong \mathcal{B}(V)$.
		
		\item[(c)] Given $V$ as in (a), classify all finite dimensional Hopf algebras $A$ such that $\operatorname{gr}A \cong \mathcal{B}(V)\sharp A_0$. We call $A$ a lifting of $\mathcal{B}(V)$ over $A_0$.
	\end{enumerate}
	
	\smallskip\noindent\textbf{1.3.}
	In this paper, we first fix a $16$-dimensional non-trivial semisimple Hopf
	algebra $H=H_{b:x^2y}$,  and then study the above
	questions. For the definition of Hopf algebra $H$, see Definition \ref{H}. We first determine the Drinfeld double $D:=D(H^{cop})$
	of $H^{cop}$ and describe the simple $D$-modules. In fact, we prove
	in Theorem \ref{sims} that there are $32$ one-dimensional simple objects
	$\mathbb{K}_{\chi_{i,j,k,l}}$ with $0\leq i,~k,~l<2, ~0\leq j<4$,
	and $56$ two-dimensional simple objects $V_{i,j,k,l}$ with
	$(i,j,k,l)\in \Omega$, $W_{i,j,k}$ with $(i,j,k)\in
	\Lambda^1$, $U_{i,j,k,l}$ with $(i,j,k,l)\in \Lambda^2$. Using the
	fact that the braided categories $_D\mathcal{M}$ and
	$_H^H\mathcal{YD}$ are monoidally isomorphic  each other (see \cite{CK}), we
	actually get the simple objects in $_H^H\mathcal{YD}$. Furthermore, we get
	all the possible finite-dimensional Nichols algebras of semisimple
	objects  satisfying  the constraint condition $\mathcal{B}(N)\cong \bigotimes_{i\in
		I}\mathcal{B}(N_i)$,  for $N=\oplus_{i\in I} \, N_i$. The following  is our first main result.
	
	\begin{thm A}
		Suppose $N=\oplus_{i\in I}N_{i}$ is semisimple with each simple object $N_i$ in $_H^H\mathcal{YD}$ such that the Nichols algebra $\mathcal {B}(N)\cong \otimes  _{i\in I}\mathcal {B}(N_i)$. Then dim $\mathcal{B}(N)< \infty$ if and only if N is isomorphism to one of the following Yetter-Drinfel modules:\\
	$(1)~\Omega_{i,j}=V_i\oplus V_j,~~i,j\in \mathbb{I}_{1,8};$\\
	$(2)~ \Omega_{1,i,j}(n_1,n_2,n_3)=M_{1}^{\oplus n_1}\oplus V_{2i-1}^{\oplus n_2}\oplus V_{2j}^{\oplus n_3}, \, i, \, j\in \mathbb{I}_{1,2}, \, n_1\geq 1, \, n_2,\, n_3\in \mathbb{I}_{0,1}, \, (n_1,n_2,n_3)\neq (2,0,0);$\\
	$(3)~\Omega_{7,i,j}(n_1,n_2,n_3)=M_7^{\oplus n_1}\oplus V_{2i-1}^{\oplus n_2}\oplus V_{2j}^{\oplus n_3}, \, i, \, j \in  \mathbb{I}_{1,2}, \, n_1\geq 1,\, n_2,\, n_3\in \mathbb{I}_{0,1},\, (n_1,n_2,n_3)\neq (2,0,0);$\\
	$(4)~\Omega_{8,i,j}(n_1,n_2,n_3)=M_8^{\oplus n_1}\oplus V_{2i-1}^{\oplus n_2}\oplus V_{2j}^{\oplus n_3}, \, i\in \mathbb{I}_{3,4}, \, n_1\geq 1, \,n_2, \,n_3\in \mathbb{I}_{0,1}$, $(n_1,n_2,n_3)\neq (2,0,0);$\\
	$(5)~\Omega_{9,i,j}(n_1,n_2,n_3)=M_9^{\oplus n_1}\oplus V_{2i-1}^{\oplus n_2}\oplus V_{2j}^{\oplus n_3},\, i\in \mathbb{I}_{3,4},\, j\in  \mathbb{I}_{1,2}, \,  n_1\geq 1, \, n_2,n_3\in \mathbb{I}_{0,1}$, $(n_1,n_2,n_3)\neq (2,0,0);$\\
	$(6)~\Omega_{10,i,j}(n_1,n_2,n_3)=M_{10}^{\oplus n_1}\oplus V_{2i-1}^{\oplus n_2}\oplus V_{2j}^{\oplus n_3}, \, i\in \mathbb{I}_{3,4},\, j\in \mathbb{I}_{1,2}, \, n_1\geq 1, \, n_2,n_3\in \mathbb{I}_{0,1}$, $(n_1,n_2,n_3)\neq (2,0,0);$\\
	$(7)~\Omega_{i}=M_i\oplus M_{i}, \, i\in \mathbb I_{1,12};$\\
	$(8)~{\Omega_{i}}^{(1)}=M_1\oplus M_{i}$, $i\in \{6,8\};$\\
	$(9)~{\Omega_{i}}^{(2)}=M_{i}\oplus M_{i+2}, \, i\in \{2,3\};$\\
	$(10)~{\Omega_{i}}^{(3)}=M_i\oplus M_{i+1}, \, i\in \{6,7,9,11\}$.
	\end{thm A}

	Finally, based on the principle of the lifting method, we classify the finite-dimensional Hopf algebras over $H$ such that their infinitesimal braidings are those Yetter-Drinfeld modules listed $(1),(7),(8),(9),(10)$ in  \textbf{Theorem A}. The following is the second main result.
	\begin{thm B}
		Let $A$ be a finite-dimensional Hopf algebras over $H$ such that its infinitesimal braiding is isomorphic to one of the Yetter-Drinfeld modules $(1)$,~$(7)$,~$(8)$,~$(9)$,~$(10)$ in  \textbf{Theorem A}, then $A$ is isomorphic to one of the following\\
		$(1)$ $\mathfrak{U}_{1,1}(\lambda,\mu)$, see Definition $\ref{def1}$;
	
	$\mathfrak{U}_{1,2}(\lambda)$, see Definition $\ref{def2}$;
	
	$\mathfrak{U}_{1,3}(\lambda)$, see Definition $\ref{def3}$;
	
	$\mathfrak{U}_{1,4}(\lambda,\mu)$, see Definition $\ref{def4}$;
	
	$\mathfrak{U}_{1,5}(\lambda,\mu)$, see Definition $\ref{def5}$;
	
	$\mathfrak{U}_{1,6}(\lambda,\mu)$, see Definition $\ref{def6}$;
	
	$\mathfrak{U}_{1,7}(\lambda, \mu)$, see Definition $\ref{def7}$;
	
	$\mathfrak{U}_{1,8}(\lambda, \mu)$, see Definition $\ref{def8}$.
	
	\noindent
	$(2)$ $\mathfrak{U}_{1}(\lambda, \mu)$, see Definition $\ref{def9}$;
	
	$\mathfrak{U}_{2}(J)$, see Definition $\ref{def10}$;
	
	$\mathfrak{U}_{3}(I)$, see Definition
	$\ref{def11}$;

	$\mathfrak{U}_{6}(I)$, see Definition $\ref{def12}$;
	
	$\mathfrak{U}_{9}(I)$, see Definition $\ref{def13}$;
	
	$\mathfrak{U}_{10}(I)$, see Definition $\ref{def14}$;
	
	$\mathfrak{U}_{13}(I)$, see Definition $\ref{def15}$;
	
	$\mathfrak{U}_{14}(\lambda,\mu)$, see Definition $\ref{def16}$;
	
	$\mathfrak{U}_{15}(I)$, see Definition $\ref{def17}$;
	
	$\mathfrak{U}_{16}(\lambda,\mu)$, see Definition $\ref{def18}$;
	
	$\mathfrak{U}_{17}(I, \beta)$, see Definition $\ref{def19}$;
	
	$\mathfrak{U}_{19}(I)$, see Definition $\ref{def20}$.
	\end{thm B}
	\begin{rem}
		In \cite{AGM15}, authors constructed some examples of Hopf algebras with the dual Chevalley property by determining all semisimple Hopf algebras that are Morita-equivalent
		to finite group algebras. But in our case: the semisimple Hopf algebra $H_{b:x^2y}$ is a $2$-pseudo-cocycle twist of some group algebra rather than a $2$-cocycle twist, so it is no longer Morita-equivalent to a group algebra as semisimple algebras.
	\end{rem}
	
	\noindent\textbf{1.4.}
	The paper is organized as follows. In Section 2, we recall some
	basics and notations of Yetter-Drinfeld modules, Nichols algebras,
	Radford biproduct and Drinfeld double. In Section $3$, we recall
	the defition of $H$ and present the Drinfeld double $D=D(H^{cop})$
	by generators and relations. We also determine the simple
	$D$-modules. In Section 4, we describe the simple objects of
	$_H^H\mathcal{YD}$ by using the monoidal isomorphism of braided
	categories $_H^H\mathcal{YD}\cong {_D\mathcal{M}}$. In Section 5, we
	obtain all the possible finite-dimensional Nichols algebras of
	semisimple modules satisfying $\mathcal{B}(N)\cong \bigotimes_{i\in
		I}\mathcal{B}(N_i)$, for $N=\oplus_{i\in I}N_i$. In Section 6, based on the principle of the
	lifting method, we classify the finite-dimensional Hopf algebras
	over $H$ such that their infinitesimal braidings are those
	Yetter-Drinfeld modules listed in $(1)$, $(7)$, $(8)$, $(9)$, $(10)$ \textbf{Theorem A}. Then we get \textbf{Theorem B}.
	
		\section{Preliminaries}
	\noindent$\mathbf{Conventions}$.
	Throughout the paper, the ground field $\mathbb{K}$ is an algebraically closed field of characteristic zero and we denote by $\xi$ a primitive $4$-th root of unity. For the references of Hopf algebra theory, one can consult \cite{M}, \cite{CK}, \cite{R}, \cite{SW}, etc.
	
	If $H$ is a Hopf algebra over $\mathbb{K}$, then $\triangle,~\varepsilon, ~S$ denote the comultiplication, the counit and the antipode, respectively. We use Sweedler's notation for the comultiplication and coaction, $e.g.,$
	$\triangle(h)=h_{(1)}\otimes h_{(2)}$ for $h\in H$. We denote by $H^{op}$ the Hopf algebra with the opposite multiplication, by $H^{cop}$ the Hopf algebra with the opposite comultiplication, and by $H^{bop}$ the Hopf algebra $H^{op~cop}$. Denote by $G(H)$ the set of group-like elements of $H$.
	\begin{equation*}
		\mathcal{P}_{g,h}(H)=\{x\in H\mid \triangle(x)=x\otimes g+h\otimes x\},\hspace{1em}\forall ~g,~h\in G(H).
	\end{equation*}
	In particular, the linear space $\mathcal{P}(H)=\mathcal{P}_{1,1}(H)$ is called the set of primitive elements.
	
	If $V$ is a $\mathbb{K}$-vector space, $v\in V,~ f\in V^\ast$, we use either $f(v),~\langle f,v\rangle$ or $\langle v,f\rangle$ to denote the evaluation. Given $n\geq 0,$ we denote $\mathbb{Z}_n=\mathbb{Z}/n\mathbb{Z}$ and $\mathbb{I}_{0,n}=\{0,1,\cdots,n\}$. It is emphasized that the operations $ij$ and $i\pm j$ are considered modulo $n+1$ for $i,j\in \mathbb{I}_{0,n}$ when not specified.
	\subsection{Yetter-Drinfeld modules and Nichols algebras}
	Let $H$  be a Hopf algebra with bijective antipode. A left Yetter-Drinfeld module $V$ over $H$ is a left $H$-module $(V, \cdot)$ and a left $H$-module $(V, \delta)$ with $\delta(v)=v_{(-1)}\otimes v_{(0)}\in H\otimes V$ for all $v\in V$, satisfying
	\begin{center}
		$\delta(h\cdot v)=h_{(1)}v_{(-1)}S(h_{(3)})\otimes h_{(2)}\cdot v_{(0)}$,  \ $\forall \, v\in V, \ \forall \,h\in H.$
	\end{center}
	We denote by $^{H}_{H}\mathcal{YD}$ the category of finite-dimensional left Yetter-Drinfeld modules over $H$. It is a braided monoidal category: for $V,W\in ^{H}_{H}\mathcal{YD}$, the braiding $c_{V,W}:V\otimes W\to W\otimes V$ is given by
	\begin{flalign}\label{braiding}
		c_{V,W}(v\otimes w)=v_{(-1)}\cdot w\otimes v_{(0)}, \quad \forall \, v\in V, \ w\in W.
	\end{flalign}
	In particular, $(V,c_{V,V})$ is a braided vector space, that is, $c := c_{V,V}$ is a linear isomorphism satisfying the braid equation
	\begin{center}
		$(c\otimes \mathrm{id})(\mathrm{id} \otimes c)(c\otimes \mathrm{id})=(\mathrm{id} \otimes c)(c\otimes \mathrm{id})(\mathrm{id} \otimes c)$.
	\end{center}
	\begin{defi} $($\cite{AS2}, Definition 2.1$)$
		Let $H$ be a Hopf algebra and $V$ a Yetter-Drinfeld module over $H$. A braided $\mathbb{N}$-graded Hopf algebra $R=\bigoplus_{n\geq 0}R(n)$ in $^H_H\mathcal{YD}$ is called a Nichols algebra of $V$ if
		$\mathbb{K}\simeq R(0)$,  $V\simeq R(1)$, $R(1)=\mathcal{P}(R)$,  $R$ is generated by $R(1)$ as algebra.
	\end{defi}
	
	For any $V\in ^H_{H}\mathcal{YD}$ there is  a Nichols algebra $\mathcal{B}(V)$ associated to it. It is the quotient of the tensor algebra $T(V)$ by the maximal homogeneous two-sided ideal $I$ satisfying:
	$I$ is generated by homogeneous elements of degree greater than or equal to  two,
	$\Delta(I) \subseteq I\otimes T(V)+T(V)\otimes I$, i.e., it is also a coideal.
	In such case, $\mathcal{B}(V)=T(V)/I$  (\cite{AS2}, Section 2.1).
	\begin{rem}
		It is well konwn that the Nichols algebra $\mathcal{B}(V)$ is completely determined as an algebra and a coalgebra by the braided space $V$. If $W\subseteq V$ is a subspace such that $c(W\otimes W)\subseteq W\otimes W$, then $\dim \mathcal{B}(V)=\infty$ if $\dim\mathcal{B}(W)=\infty$. In particular, if $V$ contains  a nonzero element $v$ such that $c(v\otimes v)=v\otimes v$, then $\dim\mathcal{B}(V)=\infty$.
	\end{rem}\label{bra}
	\begin{lem}\label{nicdim} $($\cite{Gn00}, Theorem \,2.2 \,$)$
		Let $(V,c)$ be a braided vector space, $V=\bigoplus_{i=1}^{n}V_{i},$  such that each $\mathcal{B}(V_i)$ is finite-dimensional. Then $\dim \mathcal{B}(V)\geq \prod_{i=1}^{n}\dim \mathcal {B}(V_{i})$. Furthermore,  the equality holds if and only if $b_{ij}=b_{ji}^{-1}$ for all $i\neq j$, where $b_{ij}=c\mid_{V_{i}\otimes V_{j}}$.
	\end{lem}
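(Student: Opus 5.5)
The inequality $\dim\mathcal{B}(V)\geq\prod_i\dim\mathcal{B}(V_i)$ should come from a surjection of graded coalgebras $\mathcal{B}(V)\to\bigotimes_i\mathcal{B}(V_i)$ onto the braided tensor product. The equality criterion should come from identifying when this map is injective, which happens exactly when the braidings between distinct summands are mutually inverse. I will do induction on $n$, so it suffices to treat $V=V_1\oplus V_2$ with $c(V_i\otimes V_j)=V_j\otimes V_i$. For $n$ summands I apply the two-summand case to $V_1\oplus(V_2\oplus\cdots\oplus V_n)$. For the "if" direction this uses the fact that pairwise inverse braidings $b_{1j}=b_{j1}^{-1}$ for all $j\geq 2$ give $b_{1,W}=b_{W,1}^{-1}$ for $W=\bigoplus_{j\ge2}V_j$. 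For the "only if" direction I restrict to two-summand subspaces.

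\textbf{Step 1 (braided tensor product).} I form $\mathcal{B}(V_1)\underline{\otimes}\mathcal{B}(V_2)$, the tensor product algebra twisted by $b_{21}$, extended multiplicatively to $\mathcal{B}(V_2)\otimes\mathcal{B}(V_1)\to\mathcal{B}(V_1)\otimes\mathcal{B}(V_2)$. I give it the tensor coproduct twisted by $b_{12}$.
\begin{itemize}
\item This object is a braided graded Hopf algebra exactly when the ``mixed'' braid relations hold. That requires $b_{12}$ and $b_{21}$ to be compatible, and the cleanest sufficient condition is $b_{12}b_{21}=\mathrm{id}$.
\item Without that condition I instead use the natural map of graded coalgebras. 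Because $\mathcal{B}(V)$ is characterized by $\ker$ of the quantum symmetrizers $\mathfrak{S}_m$, I would argue at the level of symmetrizers, using Gra\~na's approach.
\item The shuffle decomposition $\mathfrak{S}_{m}$ restricted to $\bigoplus_{p+q=m}V_1^{\otimes p}\otimes V_2^{\otimes q}$ factors as $(\mathfrak{S}_p\otimes\mathfrak{S}_q)\circ$ (sum over $(p,q)$-shuffles). Its image therefore surjects, after projecting onto the component ordered $V_1^{\otimes p}\otimes V_2^{\otimes q}$, onto $\mathcal{B}^p(V_1)\otimes\mathcal{B}^q(V_2)$.
\item Comparing dimensions degree by degree gives the inequality.
\end{itemize}

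\textbf{Step 2 (equality $\Leftarrow$).} If $b_{12}=b_{21}^{-1}$, then $V_1\otimes V_2$ and $V_2\otimes V_1$ braid as a symmetric pair. I check that $\mathcal{B}(V_1)\underline{\otimes}\mathcal{B}(V_2)$ is a graded braided Hopf algebra generated in degree one. I also check that its primitives are exactly $V_1\oplus V_2$, by looking at the degree-$(p,q)$ components of the coproduct. By the characterization in Definition~2.1 it is then $\mathcal{B}(V)$, and equality follows.

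\textbf{Step 3 (equality $\Rightarrow$, the main obstacle).} Suppose $b_{12}b_{21}\neq\mathrm{id}$. Then I produce an element of degree $2$ lying in $\mathcal{B}(V)$ but not accounted for by the tensor product. The relevant kernel is $\ker(\mathrm{id}+c)$ on $V_1\otimes V_2\oplus V_2\otimes V_1$; its dimension equals $\dim(V_1\otimes V_2)$ iff $b_{21}b_{12}=\mathrm{id}$ (the restriction of $\mathrm{id}+c$ is $\begin{pmatrix}\mathrm{id}&b_{21}\\ b_{12}&\mathrm{id}\end{pmatrix}$, whose rank is $\dim(V_1\otimes V_2)+\operatorname{rank}(\mathrm{id}-b_{12}b_{21})$ after block elimination). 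Hence in degree $2$, $\dim\mathcal{B}^2(V)$ strictly exceeds the degree-$2$ part of $\bigotimes\mathcal{B}(V_i)$. Combined with the degreewise inequality of Step 1, this gives strict inequality of total dimensions.

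The delicate point is making Step 1's degreewise surjection rigorous, i.e.\ controlling the shuffle factorization of quantum symmetrizers for a non-symmetric pair. I would attempt it first via the dual picture: realize $\bigotimes\mathcal{B}(V_i)$ as a subquotient using skew-derivations $\partial_f$ for $f\in V_i^*$.
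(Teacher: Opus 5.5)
The paper gives no proof of this lemma. It quotes it from Gra\~na's Theorem 2.2, and in doing so drops the hypothesis $c(V_i\otimes V_j)=V_j\otimes V_i$; you are right to restore it, since without it $b_{ij}$ and $b_{ji}^{-1}$ are meaningless. Your argument is therefore a genuinely different, self-contained route, and in substance it is correct. The degreewise inequality $\dim\mathcal{B}^m(V)\geq\sum_{p+q=m}\dim\mathcal{B}^p(V_1)\dim\mathcal{B}^q(V_2)$ combines with your Schur-complement computation, which shows that the $(V_1\otimes V_2\oplus V_2\otimes V_1)$-block of $\mathrm{id}+c$ has rank $\dim(V_1\otimes V_2)+\operatorname{rank}(\mathrm{id}-b_{12}b_{21})$. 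Together they give strict inequality whenever some $b_{ij}b_{ji}\neq\mathrm{id}$, and your reduction to two summands is sound. Citing Gra\~na buys the paper brevity. Your route buys an elementary proof that also yields the coefficientwise Hilbert-series inequality and an explicit degree-$2$ obstruction.

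The step you single out as the main obstacle is in fact immediate and needs no skew-derivations. Because $c(V_i\otimes V_j)=V_j\otimes V_i$, each Matsumoto lift $M(\sigma)$ sends the summand $V_{i_1}\otimes\cdots\otimes V_{i_m}$ to the summand with permuted indices. It sends $V_1^{\otimes p}\otimes V_2^{\otimes q}$ back into itself only for $\sigma\in S_p\times S_q$, and there $M(\sigma)$ is a tensor product of lifts for $V_1$ and $V_2$. So the diagonal block of $\mathfrak{S}_m$ on $V_1^{\otimes p}\otimes V_2^{\otimes q}$ is exactly $\mathfrak{S}_p\otimes\mathfrak{S}_q$, and a matrix has rank at least that of any block.

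Let $T_{p,q}$ be the sum of the lifts of the $(p,q)$-shuffles. Combining the block computation with $\mathfrak{S}_m=T_{p,q}\circ(\mathfrak{S}_p\otimes\mathfrak{S}_q)$ gives $\ker\mathfrak{S}_m\cap(V_1^{\otimes p}\otimes V_2^{\otimes q})=\ker(\mathfrak{S}_p\otimes\mathfrak{S}_q)$. In other words, multiplication $\mathcal{B}(V_1)\otimes\mathcal{B}(V_2)\to\mathcal{B}(V)$ is injective.

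Your opening description of this as a surjection of graded coalgebras onto the braided tensor product is not right without the symmetry condition. For one-dimensional $V_1=\mathbb{K}v$ and $V_2=\mathbb{K}w$ with $q_{12}q_{21}\neq 1$, the natural map $(\pi_1\otimes\pi_2)\Delta$ already fails to be comultiplicative on $vw$ or on $wv$. You only ever use a degreewise linear surjection, though, so nothing breaks.

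Step 2 can also be done with symmetrizers. That avoids checking braided Hopf algebra axioms for an abstract braided vector space; the paper's Definition 2.1 is phrased in ${}^H_H\mathcal{YD}$. Suppose $c_i^2=\mathrm{id}$ on a summand whose positions $i,i+1$ carry a mixed pair. Pairing $\sigma$ with $\sigma s_i$ gives $\mathfrak{S}_m c_i=\mathfrak{S}_m$ there. Hence every ordering of multidegree $(p,q)$ has the same image as $V_1^{\otimes p}\otimes V_2^{\otimes q}$, and the kernel identity above gives equality in each degree.
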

	\subsection{Bosonization and Hopf algebras with a projection}
	Let $R$ be a Hopf algebra in $^H_H\mathcal{YD}$ and denote the coproduct by $\Delta_{R}(r)=r^{(1)}\otimes r^{(2)}$ for $r\in R$. We define the Radford biproduct or bosonization $R\sharp H$ as follows: as a vectoe space, $R\sharp H=R\otimes H$, and the multiplication and comultiplication are given by the smash product and smash-coprodcut, respectively:
	\[
	(r\sharp g)(s\sharp h)=r(g_{(1)}\cdot s) \sharp g_{(2)}h,\Delta(r\sharp g)=r^{(1)}\sharp (r^{(2)})_{(-1)}g_{(1)}\otimes (r^{(2)})_{(0)}\sharp g_{(2)}.
	\]		
	Clearly, the map $\iota : H \to R\sharp H,h\mapsto 1\sharp h,\  \forall \, h\in H,$ is injective and the map
	\[
	\pi : R\sharp H \to H,~~ r\sharp h\mapsto \epsilon_{R}(r)h,\quad \forall \, r\in R, \ b\in H.\]
	is surjective such that $\pi \circ \iota= \mathrm{id_{H}}.$ Moreover, it holds that
	\[
	R\cong R\sharp 1=(R\sharp H)^{co H}=\{ x\in R\sharp H\mid (\mathrm{id} \otimes \pi)\Delta(x)=x\otimes 1\}.\]
	
	Conversely, if $A$ is a Hopf algebra wih bijective antipode and $\pi :A\to H$ is a Hopf algebra epimorphism admittig  a Hopf algebra section $\iota :H\to A$ such that $\pi \circ \iota =\mathrm{id_{H}},$ then $R=A^{co \pi}$ is a braided Hopf algebra in $^H_H\mathcal {YD}$ and $A\simeq R\sharp H$ as Hopf algebra (see \cite{R85}, \cite{R}).
	\subsection{The Drinfeld double}
	Let $H$ be a finite-dimensional Hopf algebra over $\mathbb{K}$. The Drinfeld double $D(H)=H^{*cop}\otimes H$ is a Hopf algebra with the tensor product coalgebra structure and algebra structure defined by
	$$(p\otimes a)(q\otimes b)=p\langle q_{(3)},a_{(1)}\rangle q_{(2)}\otimes a_{(2)}\langle q_{(1),S^{-1}}(a_{(3)})\rangle b,\quad  \forall \, p,\  q\in H^{*}, \ a, \ b\in H.$$
	\begin{pro}$($\cite{M}, Proposition 10.6.16 $)$
		Let $H$ be a finite-dimensional Hopf algebra. Then the Yetter-Drinfeld category $^H_H\mathcal{YD}$ can be identified with the category $_{D(H^{cop})}\mathcal{M}$ of left modules over the Drinfeld double $D(H^{cop}).$
	\end{pro}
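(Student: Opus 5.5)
The identification $^H_H\mathcal{YD}\cong {}_{D(H^{cop})}\mathcal{M}$ comes from writing a Yetter-Drinfeld structure as a pair of actions. An $H$-module structure on $V$ is one action. An $H$-comodule structure $\delta(v)=v_{(-1)}\otimes v_{(0)}$ is the same as a left $H^{*}$-module structure, and we fix the convention $f\cdot v=\langle f,v_{(-1)}\rangle v_{(0)}$ (up to composing with $S^{\pm1}$ if needed to get a left action). We then view $D(H^{cop})=(H^{cop})^{*cop}\otimes H^{cop}$, and note that as algebras $(H^{cop})^{*cop}$ is $H^{*}$ with its usual multiplication. Given $V\in{}^H_H\mathcal{YD}$, we define
$$(f\otimes h)\cdot v=f\cdot(h\cdot v).$$
The plan is to check, first, that this is an action of $D(H^{cop})$, and second, that every $D(H^{cop})$-module arises this way. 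Together these give a functor which is the identity on underlying spaces and morphisms.

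\emph{Step 1.} The maps $H^{*}\to D$, $f\mapsto f\otimes 1$, and $H\to D$, $h\mapsto \varepsilon\otimes h$, are algebra maps. Since $D$ is spanned by products $(f\otimes1)(\varepsilon\otimes h)=f\otimes h$, a $D$-module structure is exactly a pair consisting of an $H^{*}$-module structure and an $H$-module structure satisfying the straightening relation $(\varepsilon\otimes a)(q\otimes 1)=\cdots$ read off from the multiplication formula of Section 2.3. Here $H^{cop}$ is used, so $\Delta$ is taken in opposite order and the antipode becomes $S^{-1}$.

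\emph{Step 2.} Because $H$ is finite-dimensional, $H^{*}$-modules are equivalent to $H$-comodules: the coaction is recovered as $\delta(v)=\sum_i h_i\otimes h^i\cdot v$ for dual bases $\{h_i\}$, $\{h^i\}$. So it remains to show that the straightening relation, evaluated against an arbitrary $q\in H^{*}$, is equivalent to the Yetter-Drinfeld compatibility
$$\delta(h\cdot v)=h_{(1)}v_{(-1)}S(h_{(3)})\otimes h_{(2)}\cdot v_{(0)}.$$
We apply both sides of $(\varepsilon\otimes a)(q\otimes 1)=\langle q_{(3)},a_{(1)}\rangle\, q_{(2)}\otimes a_{(2)}\,\langle q_{(1)},S^{-1}(a_{(3)})\rangle$ (with coproducts of $H^{cop}$) to $v$. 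We then rewrite the pairings $\langle q_{(1)}\otimes q_{(2)}\otimes q_{(3)},\,\cdot\rangle$ as a single pairing of $q$ with a product in $H$. Since $q$ is arbitrary, both sides become identities in $H\otimes V$. Renaming $a$ via the cop-structure turns $S^{-1}$ into $S$ and reverses the index order, which should produce exactly the Yetter-Drinfeld condition. The converse direction is the same computation read backwards.

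\emph{Step 3.} Morphisms of $D$-modules are exactly the maps that are both $H$-linear and $H$-colinear. So we obtain an isomorphism of categories, and it is monoidal because the coproduct of $D$ induces the diagonal action and the tensor coaction.

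The main obstacle is Step 2: the bookkeeping of conventions, namely the $cop$ on both $H$ and $H^{*}$, the left-versus-right dual action, and $S$ versus $S^{-1}$. If the first convention does not match, I would instead use $f\cdot v=\langle f,S(v_{(-1)})\rangle v_{(0)}$ and redo the computation; exactly one of these conventions will make the relations coincide.
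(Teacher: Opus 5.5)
Your strategy is the standard one: split a $D$-module into two module structures plus the straightening relation, turn the dual action into a coaction using finite-dimensionality, and match the straightening relation with the Yetter--Drinfeld condition. The paper gives no argument of its own and only cites Montgomery, where the proof is this computation. However, the convention bookkeeping, which you rightly single out as the crux, rests on a false claim.

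\textbf{The dual factor is $H^{*op}$, not $H^{*}$.} As algebras, $(H^{cop})^{*cop}$ is not $H^{*}$. The product of $(H^{cop})^{*}$ is the transpose of $\Delta^{op}$, so $(H^{cop})^{*cop}\cong H^{*op}$. This is why the paper has $a,b,c$ satisfying the relations of $H^{*bop}$. Consequently, in Step 1 the map $f\mapsto f\otimes 1$ is an algebra map from $H^{*op}$, and a $D(H^{cop})$-module is an $H^{*op}$-module together with an $H$-module.

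\textbf{Which convention is correct.} Your first convention is exactly right, but not for the reason you give. The rule $f\cdot v=\langle f,v_{(-1)}\rangle v_{(0)}$ satisfies $f\cdot(g\cdot v)=(gf)\cdot v$. So it is not a left $H^{*}$-action but a left $H^{*op}$-action, which is precisely what $D(H^{cop})$ requires. It is also the convention behind the paper's formula $\delta(v)=\sum_i h_i\otimes h^i\cdot v$. No $S^{\pm1}$ correction is needed.

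Your fallback $f\cdot v=\langle f,S(v_{(-1)})\rangle v_{(0)}$ is the wrong one. It is a left $H^{*}$-action, so it is an action of $H^{*op}\otimes 1$ only if commutators of $H^{*}$ act trivially, which fails here because $H$ is noncocommutative. Moreover, feeding it into the straightening relation yields $\delta(a\cdot v)=a_{(3)}v_{(-1)}S^{-1}(a_{(1)})\otimes a_{(2)}\cdot v_{(0)}$ instead of the Yetter--Drinfeld condition. Acting on your belief that the dual factor is $H^{*}$ would therefore lead you to discard the correct convention.

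\textbf{Step 2 needs one more step.} For $D(H^{cop})$ one has $(\varepsilon\otimes a)(q\otimes 1)=\langle q_{(1)},S(a_{(1)})\rangle\langle q_{(3)},a_{(3)}\rangle\, q_{(2)}\otimes a_{(2)}$, with Sweedler notation of $H$ for $a$ and of $H^{*}$ for $q$. Applying this to $v$ gives $v_{(-1)}\otimes a\cdot v_{(0)}=S(a_{(1)})(a_{(2)}\cdot v)_{(-1)}a_{(3)}\otimes(a_{(2)}\cdot v)_{(0)}$. This is not the Yetter--Drinfeld condition verbatim. To reach $\delta(a\cdot v)=a_{(1)}v_{(-1)}S(a_{(3)})\otimes a_{(2)}\cdot v_{(0)}$, replace $a$ by $a_{(2)}$ and multiply the first tensorand by $a_{(1)}$ on the left and $S(a_{(3)})$ on the right, using the antipode axioms; the converse is analogous.

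\textbf{Step 3 is false as written.} Monoidality is not part of the statement, but your claim is incorrect. The coproduct of $D(H^{cop})$ makes $h$ act on $V\otimes W$ by $h_{(2)}\cdot v\otimes h_{(1)}\cdot w$. It induces the coaction $w_{(-1)}v_{(-1)}\otimes v_{(0)}\otimes w_{(0)}$. So the $D$-module $V\otimes W$ is the Yetter--Drinfeld module $W\otimes V$ transported by the flip. A monoidal structure on the identification needs the flip composed with the braiding, not the identity map.
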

	\section{The Hopf algebra $H$ and its Drinfeld double}
	In this section, we will recall the structure of the 16-dimensional non-trivial semisimple Hopf algebra $H=H_{b:x^2y}$ \cite{K00} and present the Drinfeld Double $D=D(H^{cop})$ by generators and relations. Then we will determine all simple left $D$-modules.
	
	\begin{defi}\label{H}
		As an algebra $H$ is generated by $x,\, y, \,t$ satisfying the relations
		\begin{gather}
			x^4=1,\hspace{1em}y^2=1,\hspace{1em}t^2=x^2y,\hspace{1em}xy=yx,\hspace{1em}tx=x^{-1}t, \hspace{1em}ty=yt,\label{3.1}
		\end{gather}
		and its coalgebra structure is determinded by
		\begin{equation} \label{3.2}
			\begin{gathered}
				\triangle(x)=x\otimes x,\quad \triangle(y)=y\otimes y,\quad
				\triangle(t)=\frac{1}{2}(\,(1+y)t\otimes t+(1-y)t\otimes x^2 t\,),\\ \varepsilon(x)=\varepsilon(y)=1,\quad \varepsilon(t)=1,
			\end{gathered}
		\end{equation}
		and its antipode is determinded by
		\begin{gather}
			S(x)=x^{-1},\quad S(y)=y,\quad S(t)=\frac{1}{2}((1+y)x^2t-(1-y) t).\nonumber
		\end{gather}
	\end{defi}
	\begin{rem}
		$1.~ G(H)=\langle x\rangle \times \langle y\rangle,~\mathcal{P}_{1,g}(H)=\mathbb{K}\{1-g\}$ for $g\in G(H)$ and a linear basis of $H$ is given by $\{x^i, ~x^iy, ~x^it, ~x^iyt \mid i\in \mathbb{I}_{0,3}\}$.
		
		$2.~$ Denote by $\{(x^i)^*, ~(x^iy)^*, ~(x^it)^*, ~(x^iyt)^* \mid i\in \mathbb{I}_{0,3}\}$ the  dual basis of Hopf algebra $H^*$. Let
		\begin{flalign*}
			&a=(\,(1+x+x^2+x^3)(1-y)(1+\xi t)\,)^*,~~~b=(\,(1-x+x^2-x^3)(1+y)(1-t)\,)^*,\\
			&c=(\,(1+\xi x-x^2-\xi x^3)(1-y)(1+t)\,)^*.
		\end{flalign*}
		Then using the multiplication table induced by the relations of $H$, it follows that
		\begin{gather*}
			a^4=b^2=1,\quad c^2=b,\quad ab=ba, \quad ca=a^3c,\quad cb=bc,\\
			\triangle(a)=a\otimes a,\quad\triangle(b)=b\otimes b,\\
			\triangle(c)=\frac{1}{2}(c+a^2c)\otimes c+\frac{1}{2}(c-a^2c)\otimes a^2bc.
		\end{gather*}
	\end{rem}
	
	\begin{pro}
		The automorphic group of $H$ is given by
		\begin{equation*}
			\langle\tau_2\rangle\times\langle\tau_5\rangle\times \langle\tau_{13}\rangle\times\langle \tau_{17}\rangle\times \langle\tau_{33}\rangle\cong\mathbb{Z}_{4}\times\mathbb{Z}_{2}\times\mathbb{Z}_{4}\times \mathbb{Z}_2\times \mathbb{Z}_2.
		\end{equation*}
		Thus all automorphisms of $H$ are given in Table 1.
	\end{pro}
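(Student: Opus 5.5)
Any Hopf automorphism $\tau$ of $H$ preserves $G(H)=\langle x\rangle\times\langle y\rangle\cong\mathbb{Z}_4\times\mathbb{Z}_2$. So I would first pin down the possible images of $x$ and $y$, then determine the possible images of $t$, and finally identify the group they form.

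\textbf{Step 1: images of $x$ and $y$.} The element $x^2y=t^2$ is central, since $t$ commutes with $x^2$ and with $y$. Conjugation by $t$ acts on $G(H)$ as $x\mapsto x^{-1}$, $y\mapsto y$. The automorphism $\tau$ must preserve this structure, so it restricts to an automorphism of $G(H)$ that commutes with inversion-conjugation. This is automatic, because $G(H)$ is abelian. However, $\tau$ must also send $t^2=x^2y$ to $\tau(t)^2$.

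Moreover, $\Delta(t)$ involves the idempotents $\frac12(1\pm y)$ and the element $x^2$. Hence $\tau(y)$ and $\tau(x^2)$ are constrained: I expect $\tau(y)=y$ and $\tau(x^2)=x^2$ to be forced, with $\tau(x)\in\{x^{\pm1},x^{\pm1}y\}$.

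\textbf{Step 2: images of $t$.} For each admissible restriction to $G(H)$, I would write
\[
\tau(t)=\sum_{g\in G(H)}(\alpha_g\, g t)
\]
and use the basis $\{x^i,x^iy,x^it,x^iyt\}$. The image $\tau(t)$ must lie in the span of $G(H)t$, since it has to anticommute with $x$ in the sense of $\tau(t)\tau(x)=\tau(x)^{-1}\tau(t)$. The preferred method is to view $H$ as the crossed product $\mathbb{K}G(H)\#_\sigma\mathbb{K}\mathbb{Z}_2$ and write $\tau(t)=et$ with $e\in\mathbb{K}G(H)$. The conditions are then:
\begin{itemize}
\item $(et)^2=e\,t e t^{-1}\,x^2y=\tau(x^2y)$;
\item $\Delta(et)=\tau\otimes\tau(\Delta t)$;
\item $\varepsilon(e)=1$.
\end{itemize}
These give equations on the coefficients of $e$ in the group basis, or more conveniently in the idempotent basis of $\mathbb{K}G(H)$ given by the characters. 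The coalgebra condition should force $e$ to be a sum of idempotents weighted by group-like elements, such as $e=\frac12(1+y)g_1+\frac12(1-y)g_2$ with $g_1,g_2\in G(H)$. The multiplicativity condition $e\,t(e)t^{-1}=1$ then restricts the pairs $(g_1,g_2)$.

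\textbf{Step 3: counting and group structure.} I would count the solutions and check that the total is $4\cdot2\cdot4\cdot2\cdot2=128$, listing them in Table 1. I would then exhibit generators $\tau_2,\tau_5,\tau_{13},\tau_{17},\tau_{33}$ with orders $4,2,4,2,2$ that pairwise commute and generate everything. This amounts to computing compositions on $x,y,t$, which is routine.

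The main obstacle is Step 2. One must solve the comultiplication compatibility for $\tau(t)$, which mixes the $y$-idempotents with $x^2$. One must also check that the resulting group is abelian, which is not obvious a priori. I would handle this by passing to the character-idempotent basis of $\mathbb{K}G(H)$, where both conditions become componentwise equations.
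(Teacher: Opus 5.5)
Your plan has two concrete failures. First, your Step 1 guess that $\tau(y)=y$ is forced is wrong. The map $x\mapsto x$, $y\mapsto x^2y$, $t\mapsto e_0t$ with $e_0=\frac12\bigl((1+\xi)+(1-\xi)x^2\bigr)$ is a Hopf automorphism; this is $\tau_{13}$, one of the generators you intend to use. Indeed $e_0$ is central with $e_0^2=x^2$, so $(e_0t)^2=x^2\cdot x^2y=\tau(x^2y)$, and the coalgebra identity can be checked on the four idempotent components $\frac12(1\pm x^2)\frac12(1\pm y)$. Hence your condition $e\,(tet^{-1})=1$ covers only the case $\tau(y)=y$; in general it is $e\,(tet^{-1})=y\,\tau(y)$. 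The coalgebra solutions also do not have the form $\frac12(1+y)g_1+\frac12(1-y)g_2$. Write $\Delta(t)=F(t\otimes t)$ with $F=\frac12(1+y)\otimes 1+\frac12(1-y)\otimes x^2$, and $\tau(y)=x^{2m}y$. Then the condition $\Delta(e)F=(\tau\otimes\tau)(F)(e\otimes e)$ becomes $(\chi\psi)(e)=\chi(e)\psi(e)(-1)^{aa'm}$ for characters with $\chi(x)=\xi^a$, $\psi(x)=\xi^{a'}$. So $e\in G(H)$ if $m=0$ and $e\in e_0G(H)$ if $m=1$. A smaller gap: $\tau(t)\tau(x)=\tau(x)^{-1}\tau(t)$ alone does not force $\tau(t)\in\mathbb{K}G(H)t$, since a component $u\in\mathbb{K}G(H)$ only needs $u(1-x^2)=0$. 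You need either that $\mathbb{K}G(H)t$ is the sum of the non-one-dimensional simple subcoalgebras, or the coalgebra equation, as the paper uses.

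Second, and decisively, Step 3 cannot be carried out, because the group-theoretic part of the statement is false. Doing your Step 2 correctly (the paper does it by brute-force coefficient comparison) shows that each of the $8$ automorphisms of $G(H)\cong\mathbb{Z}_4\times\mathbb{Z}_2$ extends in exactly $8$ ways. So $|\mathrm{Aut}(H)|=64$, matching the $64$ rows of Table 1, not $128$. The proposed generators are neither independent nor commuting:
\[
\tau_{13}^2=\tau_3=\tau_2^2,\qquad \tau_{17}\tau_2\tau_{17}=\tau_4=\tau_2^{-1},\qquad \tau_{33}\tau_2(t)=xyt\neq xt=\tau_2\tau_{33}(t).
\]
In fact the sequence $1\to G(H)\to\mathrm{Aut}(H)\to\mathrm{Aut}(G(H))\to 1$, with $g\mapsto(t\mapsto gt)$, is exact, and $\mathrm{Aut}(H)$ acts on the kernel through the natural, nontrivial action. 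Hence $\mathrm{Aut}(H)$ is non-abelian of order $64$. The paper's proof only produces the list and never verifies the direct-product decomposition. What can be proved is Table 1, not the stated isomorphism.
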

	\begin{center}
		\renewcommand{\arraystretch}{0.7}
		\begin{tabular}{|c|c|c|c|c|c|c|c|}
			\hline
			& $x$ & $y$ & $t$ &  & $x$ & $y$ & $t$   \\
			\hline
			$\tau_1$ & $x$ & $y$ & $t$ & $\tau_{33}$ & $xy$ & $y$ & $t$\\
			\hline
			$\tau_2$ & $x$ & $y$ & $xt$ & $\tau_{34}$ & $xy$ & $y$ & $xt$\\
			\hline
			$\tau_3$ & $x$ & $y$ & $x^2t$ & $\tau_{35}$ & $xy$ & $y$ & $x^2t$  \\
			\hline
			$\tau_4$ & $x$ & $y$ & $x^3t$ & $\tau_{36}$ & $xy$ & $y$ & $x^3t$  \\
			\hline
			$\tau_5$ & $x$ & $y$ & $yt$ & $\tau_{37}$ & $xy$ & $y$ & $yt$  \\
			\hline
			$\tau_6$ & $x$ & $y$ & $xyt$ & $\tau_{38}$ & $xy$ & $y$ & $xyt$    \\
			\hline
			$\tau_7$ & $x$ & $y$ & $x^2yt$ & $\tau_{39}$ & $xy$ & $y$ & $x^2yt$   \\
			\hline
			$\tau_8$ & $x$ & $y$ & $x^3yt$ & $\tau_{40}$ & $xy$ & $y$ & $x^3yt$  \\
			\hline
			$\tau_9$ & $x$ & $x^2y$ & $\frac{1}{2}((1+\xi)yt+(1-\xi)x^2yt)$ & $\tau_{41}$ & $xy$ & $x^2y$ & $\frac{1}{2}((1+\xi)yt+(1-\xi)x^2yt)$ \\
			\hline
			$\tau_{10}$ & $x$ & $x^2y$ & $\frac{1}{2}((1-\xi)yt+(1+\xi)x^2yt)$ & $\tau_{42}$ & $xy$ & $x^2y$ & $\frac{1}{2}((1-\xi)yt+(1+\xi)x^2yt)$  \\
			\hline
			$\tau_{11}$ & $x$ & $x^2y$ & $\frac{1}{2}((1+\xi)xyt+(1-\xi)x^3yt)$ & $\tau_{43}$ & $xy$ & $x^2y$ & $\frac{1}{2}((1+\xi)xyt+(1-\xi)x^3yt)$  \\
			\hline
			$\tau_{12}$ & $x$ & $x^2y$ & $\frac{1}{2}((1-\xi)xyt+(1+\xi)x^3yt)$ & $\tau_{44}$ & $xy$ & $x^2y$ & $\frac{1}{2}((1-\xi)xyt+(1+\xi)x^3yt)$ \\
			\hline
			$\tau_{13}$ & $x$ & $x^2y$ &  $\frac{1}{2}((1+\xi)t+(1-\xi)x^2t)$& $\tau_{45}$ & $xy$ & $x^2y$ & $\frac{1}{2}((1+\xi)t+(1-\xi)x^2t)$\\
			\hline
			$\tau_{14}$ & $x$ & $x^2y$ & $\frac{1}{2}((1-\xi)t+(1+\xi)x^2t)$ & $\tau_{46}$ & $xy$ & $x^2y$ & $\frac{1}{2}((1-\xi)t+(1+\xi)x^2t)$  \\
			\hline
			$\tau_{15}$ & $x$ & $x^2y$ & $\frac{1}{2}((1+\xi)xt+(1-\xi)x^3t)$ & $\tau_{47}$ & $xy$ & $x^2y$ & $\frac{1}{2}((1+\xi)xt+(1-\xi)x^3t)$ \\
			\hline
			$\tau_{16}$ & $x$ & $x^2y$ & $\frac{1}{2}((1-\xi)xt+(1+\xi)x^3t)$ & $\tau_{48}$ & $xy$ & $x^2y$ & $\frac{1}{2}((1-\xi)xt+(1+\xi)x^3t)$  \\
			\hline
			$\tau_{17}$ & $x^3$ & $y$ & $t$ & $\tau_{49}$ & $x^3y$ & $y$ & $t$\\
			\hline
			$\tau_{18}$ & $x^3$ & $y$ & $xt$ & $\tau_{50}$ & $x^3y$ & $y$ & $xt$\\
			\hline
			$\tau_{19}$ & $x^3$ & $y$ & $x^2t$ & $\tau_{51}$ & $x^3y$ & $y$ & $x^2t$  \\
			\hline
			$\tau_{20}$ & $x^3$ & $y$ & $x^3t$ & $\tau_{52}$ & $x^3y$ & $y$ & $x^3t$  \\
			\hline
			$\tau_{21}$ & $x^3$ & $y$ & $yt$ & $\tau_{53}$ & $x^3y$ & $y$ & $yt$  \\
			\hline
			$\tau_{22}$ & $x^3$ & $y$ & $xyt$ & $\tau_{54}$ & $x^3y$ & $y$ & $xyt$    \\
			\hline
			$\tau_{23}$ & $x^3$ & $y$ & $x^2yt$ & $\tau_{55}$ & $x^3y$ & $y$ & $x^2yt$   \\
			\hline
			$\tau_{24}$ & $x^3$ & $y$ & $x^3yt$ & $\tau_{56}$ & $x^3y$ & $y$ & $x^3yt$  \\
			\hline
			$\tau_{25}$ & $x^3$ & $x^2y$ & $\frac{1}{2}((1+\xi)yt+(1-\xi)x^2yt)$ & $\tau_{57}$ & $x^3y$ & $x^2y$ & $\frac{1}{2}((1+\xi)yt+(1-\xi)x^2yt)$ \\
			\hline
			$\tau_{26}$ & $x^3$ & $x^2y$ & $\frac{1}{2}((1-\xi)yt+(1+\xi)x^2yt)$ & $\tau_{58}$ & $x^3y$ & $x^2y$ & $\frac{1}{2}((1-\xi)yt+(1+\xi)x^2yt)$  \\
			\hline
			$\tau_{27}$ & $x^3$ & $x^2y$ & $\frac{1}{2}((1+\xi)xyt+(1-\xi)x^3yt)$ & $\tau_{59}$ & $x^3y$ & $x^2y$ & $\frac{1}{2}((1+\xi)xyt+(1-\xi)x^3yt)$  \\
			\hline
			$\tau_{28}$ & $x^3$ & $x^2y$ & $\frac{1}{2}((1-\xi)xyt+(1+\xi)x^3yt)$ & $\tau_{60}$ & $x^3y$ & $x^2y$ & $\frac{1}{2}((1-\xi)xyt+(1+\xi)x^3yt)$ \\
			\hline
			$\tau_{29}$ & $x^3$ & $x^2y$ &  $\frac{1}{2}((1+\xi)t+(1-\xi)x^2t)$& $\tau_{61}$ & $x^3y$ & $x^2y$ & $\frac{1}{2}((1+\xi)t+(1-\xi)x^2t)$\\
			\hline
			$\tau_{30}$ & $x^3$ & $x^2y$ & $\frac{1}{2}((1-\xi)t+(1+\xi)x^2t)$ & $\tau_{62}$ & $x^3y$ & $x^2y$ & $\frac{1}{2}((1-\xi)t+(1+\xi)x^2t)$  \\
			\hline
			$\tau_{31}$ & $x^3$ & $x^2y$ & $\frac{1}{2}((1+\xi)xt+(1-\xi)x^3t)$ & $\tau_{63}$ & $x^3y$ & $x^2y$ & $\frac{1}{2}((1+\xi)xt+(1-\xi)x^3t)$ \\
			\hline
			$\tau_{32}$ & $x^3$ & $x^2y$ & $\frac{1}{2}((1-\xi)xt+(1+\xi)x^3t)$ & $\tau_{64}$ & $x^3y$ & $x^2y$ & $\frac{1}{2}((1-\xi)xt+(1+\xi)x^3t)$  \\
			\hline
		\end{tabular}
	\end{center}		
	\begin{proof}
		Let $f$ be an automorphism of $H$. Since $G(H)=\langle x\rangle \times \langle y\rangle$, and $x^4=1$, $y^2=1$, we have that $f(x)\in \{ x, x^3, xy, x^3y\}$, $f(y)\in \{y, x^2y\}$. If $f(t)$ has group-like elements, then it does not satisfy $\Delta f(t)=(f\otimes f)\Delta (t)$. Then we let $f(t)=(k_1+k_2x+k_3x^2+k_4x^3+k_5y+k_6xy+k_7x^2y+k_8x^3y)t$, $k_i\in \mathbb{K}$, $1\leq i \leq 8$. Suppose $f(x)=x,f(y)=y$, using  $\Delta f(t)=(f\otimes f)\Delta(t)$ and comparing cofficients of $x^iy^jt\otimes x^{i'}y^{j'}t$, $i,i^{'}\in \mathbb {I}_{0,3},j,j^{'}\in \mathbb{I}_{0,1}$,  we obtain the following:
		\begin{gather*}
			k_1(k_3-k_7)=k_1(k_3+k_7)=k_1(k_4+k_8)=k_1(k_2+k_6)=0,\\
			k_2(k_4-k_8)=k_2(k_4+k_8)=k_2(k_1+k_5)=k_2(k_3+k_7)=0,\\
			k_3(k_1-k_5)=k_3(k_1+k_5)=k_3(k_2+k_6)=k_3(k_4+k_8)=0,\\
			k_4(k_2-k_6)=k_4(k_2+k_6)=k_4(k_1+k_5)=k_4(k_3+k_7)=0,\\
			k_5(k_3-k_7)=k_5(k_3+k_7)=k_5(k_2+k_6)=k_5(k_4+k_8)=0,\\
			k_6(k_4-k_8)=k_6(k_4+k_8)=k_6(k_1+k_5)=k_6(k_3+k_7)=0,\\
			k_7(k_1-k_5)=k_7(k_1+k_5)=k_7(k_2+k_6)=k_7(k_4+k_8)=0,\\
			k_8(k_2-k_6)=k_8(k_2+k_6)=k_8(k_1+k_5)=k_8(k_3+k_7)=0.
		\end{gather*}
		Therefore,  we have that  $f(t)\in \{t,xt,x^2t,x^3t,yt,xyt,x^2yt,x^3yt\}$.\\
		When $f(x)=x,f(y)=x^2y$, by $\Delta f(t)=(f\otimes f)\Delta (t)$ and comparing cofficients of $x^iy^jt\otimes x^{i'}y^{j'}t$, $i,i^{'}\in \mathbb {I}_{0,3},j,j^{'}\in \mathbb{I}_{0,1}$, then
		\begin{gather*}
			k_1(k_2+k_8)=k_1(k_4+k_6)=k_1(k_1+k_3+k_5+k_7-1)=0,\\
			k_2(k_1+k_7)=k_2(k_3+k_5)=k_2(k_2+k_4+k_6+k_8-1)=0,\\
			k_3(k_2+k_8)=k_3(k_4+k_6)=k_3(k_1+k_3+k_5+k_7-1)=0,\\
			k_4(k_3+k_5)=k_4(k_1+k_7)=k_4(k_2+k_4+k_6+k_8-1)=0,\\
			k_5(k_2+k_8)=k_5(k_4+k_6)=k_5(k_1+k_3+k_5+k_7-1)=0,\\
			k_6(k_1+k_7)=k_6(k_3+k_5)=k_6(k_2+k_4+k_6+k_8-1)=0,\\
			k_7(k_2+k_8)=k_7(k_4+k_6)=k_7(k_1+k_3+k_5+k_7-1)=0,\\
			k_8(k_1+k_7)=k_8(k_3+k_5)=k_8(k_2+k_4+k_6+k_8-1)=0.
		\end{gather*}
		So we have
		\allowdisplaybreaks
		\begin{align*}
			f(t) \in \Bigl\{
			&\frac12(\,(1+\xi)t + (1-\xi)x^2 t\,),\,
			\frac12(\,(1-\xi)t + (1+\xi)x^2 t\,),\\
			&\frac12(\,(1+\xi)xt + (1-\xi)x^3 t\,),\,
			\frac12(\,(1-\xi)xt + (1+\xi)x^3 t\,),\\
			&\frac12(\,(1+\xi)yt + (1-\xi)x^2 yt\,),\,
			\frac12(\,(1-\xi)yt + (1+\xi)x^2 yt\,),\\
			&\frac12(\,(1+\xi)xyt + (1-\xi)x^3 yt\,),\,
			\frac12(\,(1-\xi)xyt + (1+\xi)x^3 yt\,)
			\Bigr\}.
		\end{align*}

		The remaining cases for  $f(x)$ and $f(y)$  can be treated analogously. Hence, This completes our proof.
	\end{proof}

	Now we describe the Drinfeld Double $D(H^{cop})$ of $H^{cop}$.
	\begin{pro}
		$D:=D(H^{cop})$ as a coalgebra is isomorphic to tensor coalgebra $H^{*bop}\otimes H^{cop}$, and as an algebra is generated by the elements $a,~b,~c,~x,~y,~t$ such that $x,~y,~t$ satisfying the relations of $H^{cop}$, $a,~b,~c$ satisfying the relations of $H^{*bop}$ and
		\begin{gather*}
			xa=ax, \quad xb=bx, \quad xc=a^2cx,\\
			ya=ay,\quad yb=by, \quad yc=cy,\\
			ta=ax^2t,\quad tb=bt, \quad tc=a^2bcx^2yt.
		\end{gather*}
	\end{pro}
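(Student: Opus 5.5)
The plan is to compute directly from the multiplication formula for the Drinfeld double recalled in Section 2.3, applied to $D(H^{cop}) = (H^{cop})^{*cop}\otimes H^{cop} = H^{*bop}\otimes H^{cop}$.

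The coalgebra statement is immediate, since $D$ carries the tensor product coalgebra structure. Next, $H^{*bop}\otimes 1$ and $1\otimes H^{cop}$ are subalgebras. This follows from the formula with $a=1$ or $q=1$. Since $a,b,c$ generate $H^*$ (Remark 2) and $x,y,t$ generate $H$, the double is generated by these six elements. Their internal relations are those of $H^{*bop}$ and $H^{cop}$: the product in $H^{*bop}$ is opposite to that of $H^*$, so the relations $ca=a^3c$ etc. of Remark 2 must be read accordingly.

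It remains to find the cross relations, i.e.\ to compute $(1\otimes h)(q\otimes 1)$ for $h\in\{x,y,t\}$ and $q\in\{a,b,c\}$. The formula gives
\[
h q = \langle q_{(3)},h_{(1)}\rangle\, q_{(2)}\, h_{(2)}\,\langle q_{(1)},S^{-1}(h_{(3)})\rangle .
\]
Here the coproducts are those of $H^{*bop}$ and $H^{cop}$, and the antipode is that of $H^{cop}$.

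For $h=x,y$ grouplike, this reduces to $hq=\langle q_{(3)},h\rangle\langle q_{(1)},h^{-1}\rangle q_{(2)}h$. The values $\langle a,x\rangle,\langle b,x\rangle,\dots$ come from the definition of $a,b,c$ as dual elements. Evaluating their images on the basis $\{x^iy^jt^k\}$, we find that $a,b$ are algebra characters taking values in $\{\pm1,\pm\xi\}$ on $x,y$. For $c$, we use $\Delta(c)=\frac12(c+a^2c)\otimes c+\frac12(c-a^2c)\otimes a^2bc$, iterate it once, and pair against $x^{\pm1}$. This should produce $xc=a^2cx$ and $yc=cy$.

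For $h=t$ we need $\Delta^{(2)}(t)$ in $H^{cop}$, expanded using the idempotents $\frac12(1\pm y)$, and $S^{-1}(t)$. Conjugating $a$ and $b$ by the resulting terms should give $ta=ax^2t$ and $tb=bt$. The case $tc$ is the main obstacle. Both $\Delta^{(2)}(c)$ and $\Delta^{(2)}(t)$ carry sums split by the idempotents $\frac12(1\pm a^2)$ and $\frac12(1\pm y)$, so we must carefully track which of the four sectors survive the pairings. We also need the values of $c$ and $a^2bc$ on elements of the forms $x^iy^jt$ and $x^iy^j$. The plan is to tabulate $\langle q, x^iy^jt^k\rangle$ for $q\in\{c,a^2c,bc,a^2bc\}$ once, then sum. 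We expect the surviving terms to reassemble into $a^2bc\,x^2yt$.

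Finally, to confirm this is a presentation, count dimensions. Using the cross relations, any word can be rewritten as $(\text{word in }a,b,c)(\text{word in }x,y,t)$. So the algebra presented by the generators and relations has dimension at most $16\cdot 16=256=\dim D$. Since $D$ satisfies all the relations and is generated by these elements, the presented algebra maps onto $D$. It is therefore isomorphic to $D$.
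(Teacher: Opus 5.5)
Your proposal follows the paper's proof. The paper likewise writes out $\Delta^2_{H^{cop}}$ of $x,y,t$ and $\Delta^2_{H^{*bop}}$ of $a,b,c$ and substitutes them into the double's multiplication formula. The delicate case $tc$ does come out as $a^2bc\,x^2yt$: only four of the sixteen sector products survive, because $\frac12(c+a^2c)$ vanishes on every $x^iy^jt$ while $c$ and $a^2bc$ vanish on $\frac12(1+y)x^it$.

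Your closing dimension count ($\le 16\cdot 16=256=\dim D$) is a step the paper omits. It is what actually justifies calling these relations a presentation, so it is a worthwhile addition.
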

	\begin{proof}
		After a direct computation, we have that
		\begin{flalign*}
			\triangle_{H^{cop}}^2(x)=&x\otimes x\otimes x, ~~~\triangle_{H^{cop}}^2(y)=y\otimes y\otimes y,\\
			\triangle_{H^{*bop}}^2(a)=&a\otimes a\otimes a,
			~~~\triangle_{H^{*bop}}^2(b)=b\otimes b\otimes b,\\
			\triangle_{H^{cop}}^2(t)=&\frac{1}{4}((t+x^2t)\otimes t\otimes (t+yt)+(t+x^2t)\otimes x^2t\otimes(t-yt)\\
			&+(t-x^2t)\otimes yt\otimes
			(t+yt)+(t-x^2t)\otimes x^2yt\otimes (yt-t)),\\
			\triangle_{H^{*bop}}^2(c)=&\frac{1}{4}((c+a^2c)\otimes c\otimes (c+a^2bc)+(c-a^2c)\otimes a^2bc \otimes (c+a^2bc)\\
			&+(c+a^2c)\otimes a^2c
			\otimes (c-a^2bc)+(a^2c-c)\otimes bc\otimes (c-a^2bc)).
		\end{flalign*}
		It follows that
		\begin{flalign*}
			xa&=\langle a,x\rangle ax\langle a,S(x)\rangle =ax,~~~~xb=\langle b,x\rangle bx \langle b,S(x)\rangle ,\\
			xc&=\frac{1}{4}(\langle c+a^2c,x\rangle(c+a^2x)x\langle c,S(x)\rangle+\langle c+a^2c,x\rangle (c-a^2c)x\langle a^2bc, S(x)\rangle)=a^2cx,\\
			ya&=\langle a,y\rangle ay \langle a, S(y) \rangle =ay,~~~~yb=\langle b, y\rangle by \langle b, S(y)\rangle=by,\\
			yc&= \frac{1}{4}(\langle c+a^2c, y\rangle (c+a^2c)y \langle c, S(y)\rangle +\langle c+a^2c, y\rangle (c-a^2c)y \langle a^2bc, S(y)\rangle )=cy,\\
			ta&=\frac{1}{4}(\langle a, t+x^2t\rangle at \langle a, S(t+yt)\rangle+\langle a, t+x^2t\rangle  ax^2t \langle a,S(t-yt)\rangle )=ax^2t,\\
			tb&=\frac{1}{4}(\langle b, t\rangle b(t+yt)\langle b,S(t+yt)\rangle +\langle b, x^2t \rangle b(t-yt)\langle b, S(t+yt)\rangle )=bt,\\
			tc&=\frac{1}{16}(\langle c+a^2bc ,t{-}x^2t\rangle cyt \langle c+a^2c, S(t+yt)\rangle -\langle c+a^2bc,t {-}x^2t\rangle cx^2yt \langle c+a^2c,S(t{-}yt)\rangle \\
			&+\!\langle c+\!a^2bc, \!t{-}\!x^2t\rangle a^2bcyt\langle c-\!a^2c,\!S(t+\!yt)\rangle-\!\langle c+\!a^2bc,t{-}\!x^2t\rangle a^2bcx^2yt\langle c-\!a^2c,\!S(t{-}\!yt)\rangle)\\
			&=a^2bcx^2yt.
		\end{flalign*}
		This completes the proof.
		
	\end{proof}

	We begin by describing the one-dimensional $D$-modules.
	\begin{lem}\label{1simple}
		There are $32$  pairwise non-isomorphic one-dimensional simple modules $\mathbb{K}_{\chi_{i,j,k,l}}$ given by the characters $\chi_{i,j,k,l}$
		\begin{gather*}
			\chi_{i,j,k,l}(x)=(-1)^i,\quad \chi_{i,j,k,l}(y)=(-1)^j, \quad \chi_{i,j,k,l}(t)=\xi ^j,\\
			\chi_{i,j,k,l}(a)=(-1)^k, \quad \chi_{i,j,k,l}(b)=(-1)^j,\quad \chi_{i,j,k,l}(c)=(-1)^l\xi^j,
		\end{gather*}
		where $0\leq i,~k,~l\leq 1, ~0\leq j\leq 3$.
		
		Moreover, any one-dimensional $D$-module is isomorphic to $\mathbb{K}_{\chi_{i,j,k,l}}$, for all $0\leq i$, $k$, $l\leq 1$, $0\leq j\leq 3$.
	\end{lem}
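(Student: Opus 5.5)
A one-dimensional $D$-module is the same thing as an algebra map $\chi\colon D\to\mathbb{K}$. By the preceding Proposition, $D$ is generated by $x,y,t,a,b,c$. The relations are those of $H^{cop}$ (the algebra relations \eqref{3.1}), those of $H^{*bop}$ (as an algebra this is $H^{*op}$, so its relations are the Remark's relations with the order of products reversed), and the nine cross relations. So I will classify characters by assigning scalars $\chi(x),\dots,\chi(c)\in\mathbb{K}$ and imposing every defining relation. Because $\mathbb{K}$ is commutative, each relation collapses to a scalar identity in which the order of factors does not matter. In particular, the $op$ reversal in $H^{*bop}$ has no effect on the conditions obtained.

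\textbf{Conditions from $H^{cop}$.} The relation $tx=x^{-1}t$ gives $\chi(x)^2=1$ (note $\chi(t)\neq 0$ since $t^2=x^2y$ is invertible), so $\chi(x)=(-1)^i$ with $i\in\{0,1\}$. Next, $y^2=1$ gives $\chi(y)=\pm1$. Then $t^2=x^2y$ gives $\chi(t)^2=\chi(x)^2\chi(y)=\chi(y)$. Hence $\chi(t)$ is a $4$-th root of unity, which we write as $\chi(t)=\xi^j$ with $0\le j\le 3$, and then $\chi(y)=\xi^{2j}=(-1)^j$ is forced. The relations $x^4=1$, $xy=yx$, $ty=yt$ impose nothing further.

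\textbf{Conditions from $H^{*bop}$.} In the same way, $ca=a^3c$ gives $\chi(a)^2=1$, so $\chi(a)=(-1)^k$. The relation $c^2=b$ gives $\chi(c)^2=\chi(b)$, and $b^2=1$ gives $\chi(b)=\pm 1$. The remaining relations $a^4=b^2=1$, $ab=ba$, $cb=bc$ are then automatic.

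\textbf{Cross relations.} The relations $xa=ax$, $xb=bx$, $ya=ay$, $yb=by$, $yc=cy$, $tb=bt$ are automatic. The relation $xc=a^2cx$ gives $\chi(a)^2=1$, and $ta=ax^2t$ gives $\chi(x)^2=1$; both are already known. The only new constraint comes from $tc=a^2bcx^2yt$, which gives $1=\chi(a)^2\chi(b)\chi(x)^2\chi(y)=\chi(b)\chi(y)$. Hence $\chi(b)=\chi(y)=(-1)^j$, and so $\chi(c)^2=(-1)^j=\xi^{2j}$, which means $\chi(c)=(-1)^l\xi^j$ with $l\in\{0,1\}$.

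\textbf{Conclusion.} Conversely, any choice of $(i,j,k,l)$ with $0\le i,k,l\le 1$, $0\le j\le 3$ satisfies all the listed relations, so it defines a character $\chi_{i,j,k,l}$ with exactly the stated values. Distinct tuples give distinct values on the generators: $i$ is read off from $\chi(x)$, $j$ from $\chi(t)$, $k$ from $\chi(a)$, and then $l$ from $\chi(c)$. Isomorphism classes of one-dimensional modules correspond bijectively to characters, so this gives $2\cdot4\cdot2\cdot2=32$ pairwise non-isomorphic modules, and every one-dimensional $D$-module is one of them.

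The only step needing care is the derivation of the cross relation $tc=a^2bcx^2yt$, but that was already carried out in the previous Proposition. Beyond that, the main pitfall is bookkeeping: one must not miss the coupling $\chi(b)=\chi(y)$, which is exactly what ties the parameter $j$ into the values on $y$, $b$ and $c$.
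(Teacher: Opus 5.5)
Your proposal is correct and follows the same approach as the paper's proof: evaluate a character on the generators $x,y,t,a,b,c$, impose the defining relations of $D$ from the preceding Proposition (with $tc=a^2bcx^2yt$ forcing $\chi(b)=\chi(y)$), read off the $32$ characters, and check that they are pairwise distinct. Your write-up is slightly more complete than the paper's, because it explicitly uses $c^2=b$ to obtain $\chi(c)=(-1)^l\xi^j$, a step the paper's proof leaves implicit.
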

	\begin{proof} Let $\lambda :D\to \mathbb{K}$ be a character and set
		$$\lambda(x)=\lambda_1,\quad \lambda(y)=\lambda_2,\quad \lambda(t)=\lambda_3, \quad \lambda(a)=\lambda_4, \quad \lambda(b)=\lambda_5,\quad \lambda(t)=\lambda_6.$$
		From  $x^4=y^2=a^4=b^2=1$, we have  $\lambda_1^4=\lambda_2^2=\lambda_4^4=\lambda_5^2=1$. Since $tx=x^{-1}t$,  we get  $\lambda_3\lambda_1=\lambda_1^3\lambda_3$, which implies $\lambda_1^2=1$. Moreover $t^2=x^2y$,  we have  $\lambda_3^2=\lambda_1^2\lambda_2$. Using $\lambda_1^2=1$, we have $\lambda_3^2=\lambda_2$.  This implies $\lambda_3^4=1$. It follows from $ac=ca^3$ that  $\lambda_4^2=1$. Finally, $tc=a^2bcx^2yt$ implies $\lambda_2\lambda_5=1$. Therefore, $\lambda $ is completely determined by $\lambda(x)$, $\lambda(t)$, $\lambda(a)$, $\lambda(b)$. Accordingly, we set
		$$\lambda(x)=(-1)^i,~~\lambda(t)=\xi ^j,~~\lambda(a)=(-1)^k,~~\lambda(b)=(-1)^j,$$
		where  $0\leq i,~k,~l\leq 1, ~0\leq j\leq 3.$
	It is straightforward to verify that these characters are pairwise non-isomorphic
	 and any one-dimensional $D$ module is isomorphic to $\mathbb{K}_{\chi_{i,j,k,l}},$ for all $0\leq i,~k,~l\leq 1, ~0\leq j\leq 3.$
	\end{proof}
	
	Next,  we describe the  two-dimensional simple $D$-modules. For convenience, we denote
	\begin{flalign*}&\Omega^1=\{(0,j,k,l)\mid j\in \mathbb{Z}_4, k\in \{1,3\},l\in\mathbb{Z}_2\},\\
		&\Omega^2=\{(i,j,k,l)\mid i\in \mathbb{Z}_2, j\in \mathbb{Z}_{2},k\in \{0,2\},j+l\equiv 1\, \mathrm{mod}\,2\},\\
		&\Omega=\Omega^1\cup\Omega^2,\\
		&\Lambda^1=\{(1,j,k)\mid j,k\in \mathbb{Z}_{2}\}\cup\{(3,j,k)\mid  j,k\in \mathbb{Z}_{2}, j+k\equiv 0 \,\mathrm{mod} \,2\},\\
		&\Lambda^2=\{(1,j,k)\mid j,k\in\mathbb{Z}_{2},j+k\equiv 1\, \mathrm{mod} \,2\},\\
		&\Gamma=\{(1,j,k,l)\mid j,k\in \mathbb{Z}_{2},l\in \mathbb{Z}_4\},\\
		&|\Omega|=24,|\Lambda^1|=6,|\Lambda^2|=2,|\Gamma|=16.	\end{flalign*}

	\begin{lem}\label{2simple}
		For any $4$-tuple $(i,j,k,l)\in \Omega$, there exists a simple two-dimensional  $D$-module $V_{i,j,k,l}$ such that, with respect to a fixed basis, the action of the
		generators of $D$ is given by
		\begin{equation*}
			\begin{split}
			&[x]=
			\left(
			\begin{array}{cc}
				(-1)^i & 0 \\
				0 & (-1)^{i+k} \\
			\end{array}
			\right),
			\quad [y]=
			\left(
			\begin{array}{cc}
				(-1)^j & 0 \\
				0 & (-1)^{j} \\
			\end{array}
			\right),\quad
			[t]=
			\left(
			\begin{array}{cc}
				\xi^j & 0 \\
				0 & (-1)^{j+k+l}\xi^j \\
			\end{array}
			\right),\\
			&[a]=
			\left(
			\begin{array}{cc}
				\xi^k & 0 \\
				0 & (-\xi)^k \\
			\end{array}
			\right),
			\quad [b]=
			\left(
			\begin{array}{cc}
				(-1)^l & 0 \\
				0 & (-1)^l \\
			\end{array}
			\right),
			\quad[c]=
			\left(
			\begin{array}{cc}
				0 & 1 \\
				(-1)^l & 0 \\
			\end{array}
			\right).
			\end{split}
		\end{equation*}
		For any $3$-tuple $(i,j,k)\in \Lambda^1$, there exists a simple two-dimensional  $D$-module  $W^1_{i,j,k}$  such that, with respect to a fixed basis, the action of the
		generators of $D$ is given by
		\begin{flalign*}
			&[x]=
			\left(
			\begin{array}{cc}
				\xi^i & 0 \\
				0 & (-\xi)^i\\
			\end{array}
			\right),
			~~~~~[y]=
			\left(
			\begin{array}{cc}
				(-1)^j & 0 \\
				0 & (-1)^{j} \\
			\end{array}
			\right),
			~~~~~[t]=
			\left(
			\begin{array}{cc}
				0 & \xi^i \\
				(-1)^j\xi^i & 0 \\
			\end{array}
			\right),\\
			&[a]=
			\left(
			\begin{array}{cc}
				\xi^i & 0 \\
				0 & (-\xi)^{i} \\
			\end{array}
			\right),
			~~~~~~[b]=
			\left(
			\begin{array}{cc}
				(-1)^k & 0 \\
				0 & (-1)^k \\
			\end{array}
			\right),
			~~~~~[c]=
			\left(
			\begin{array}{cc}
				0 &1 \\
				(-1)^k & 0 \\
			\end{array}
			\right).
		\end{flalign*}
		For any $3$-tuple $(i,j,k)\in \Lambda^1$, there exists a simple two-dimensional  $D$-module $W^2_{i,j,k}$  such that, with respect to a fixed basis, the action of the
		generators of $D$ is given by
		\begin{flalign*}
			&[x]=
			\left(
			\begin{array}{cc}
				\xi^i & 0 \\
				0 & (-\xi)^i\\
			\end{array}
			\right),
			~~~~~[y]=
			\left(
			\begin{array}{cc}
				(-1)^j & 0 \\
				0 & (-1)^{j} \\
			\end{array}
			\right),
			~~~~~[t]=
			\left(
			\begin{array}{cc}
				0 & \xi^i \\
				(-1)^j\xi^i & 0 \\
			\end{array}
			\right),\\
			&[a]=
			\left(
			\begin{array}{cc}
				(-\xi)^{i} & 0 \\
				0 & \xi^{i} \\
			\end{array}
			\right),
			~~~~~~[b]=
			\left(
			\begin{array}{cc}
				(-1)^k & 0 \\
				0 & (-1)^k \\
			\end{array}
			\right),
			~~~~~[c]=
			\left(
			\begin{array}{cc}
				0 &1 \\
				(-1)^k & 0 \\
			\end{array}
			\right).
		\end{flalign*}
		For any $3$-tuple $(i,j,k)\in \Lambda^2$, there exists a simple two-dimensional  $D$-module $W^3_{i,j,k}$  such that, with respect to a fixed basis, the action of the
		generators of $D$ is given by
		\begin{flalign*}
			&[x]=
			\left(
			\begin{array}{cc}
				\xi^i & 0 \\
				0 & (-\xi)^i\\
			\end{array}
			\right),
			~~~~~[y]=
			\left(
			\begin{array}{cc}
				(-1)^j & 0 \\
				0 & (-1)^{j} \\
			\end{array}
			\right),
			~~~~~[t]=
			\left(
			\begin{array}{cc}
				0 & (-\xi)^i \\
				(-1)^{i+j}\xi^i & 0 \\
			\end{array}
			\right),\\
			&[a]=
			\left(
			\begin{array}{cc}
				\xi^{i} & 0 \\
				0 & (-\xi)^{i} \\
			\end{array}
			\right),
			~~~~~~[b]=
			\left(
			\begin{array}{cc}
				(-1)^k & 0 \\
				0 & (-1)^k \\
			\end{array}
			\right),
			~~~~~[c]=
			\left(
			\begin{array}{cc}
				0 &1 \\
				(-1)^k & 0 \\
			\end{array}
			\right).
		\end{flalign*}
		For any $3$-tuple $(i,j,k)\in \Lambda^2$, there exists a simple two-dimensional  $D$-module $W^4_{i,j,k}$ such that, with respect to a fixed basis, the action of the
		generators of $D$ is given by
		\begin{flalign*}
			&[x]=
			\left(
			\begin{array}{cc}
				\xi^i & 0 \\
				0 & (-\xi)^i\\
			\end{array}
			\right),
			~~~~~[y]=
			\left(
			\begin{array}{cc}
				(-1)^j & 0 \\
				0 & (-1)^{j} \\
			\end{array}
			\right),
			~~~~~[t]=
			\left(
			\begin{array}{cc}
				0 & (-\xi)^i \\
				(-1)^{i+j}\xi^i & 0 \\
			\end{array}
			\right),\\
			&[a]=
			\left(
			\begin{array}{cc}
				(-\xi)^{i} & 0 \\
				0 & \xi^{i} \\
			\end{array}
			\right),
			~~~~~~[b]=
			\left(
			\begin{array}{cc}
				(-1)^k & 0 \\
				0 & (-1)^k \\
			\end{array}
			\right),
			~~~~~[c]=
			\left(
			\begin{array}{cc}
				0 &1 \\
				(-1)^k & 0 \\
			\end{array}
			\right).
		\end{flalign*}
		For any $4$-tuple $(i,j,k,l)\in \Gamma$, there exists a simple two-dimensional  $D$-module $U_{i,j,k,l}$  such that, with respect to a fixed basis, the action of the
		generators of $D$ is given by
		\begin{equation*}
			\begin{split}
			&[x]=
			\left(
			\begin{array}{cc}
				\xi^i & 0 \\
				0 & (-\xi)^{i} \\
			\end{array}
			\right),
			\quad[y]=
			\left(
			\begin{array}{cc}
				(-1)^j & 0 \\
				0 & (-1)^{j} \\
			\end{array}
			\right),
			\quad [t]=
			\left(
			\begin{array}{cc}
				0 & 1 \\
				(-1)^{i+j} & 0 \\
			\end{array}
			\right),\\
			&[a]=
			\left(
			\begin{array}{cc}
				(-1)^k & 0 \\
				0 & (-1)^{i+k} \\
			\end{array}
			\right),
			\quad [b]=
			\left(
			\begin{array}{cc}
				(-1)^l & 0 \\
				0 & (-1)^l \\
			\end{array}
			\right),
			\quad[c]=
			\left(
			\begin{array}{cc}
				\xi^l & 0 \\
				0& (-1)^{i+j+l}\xi^l\\
			\end{array}
			\right).
			\end{split}
		\end{equation*}
		Conversely, every simple two-dimensional   $D$-module  $V$ is
		isomorphic to one of the following modules $V_{i,j,k,l}$ with $(i,j,k,l)\in \Omega$,  $W^p_{i,j,k}$ with $(i,j,k)\in \Lambda^q$ and $p\in \mathbb{I}_{1,4}$, or $U_{i,j,k,l}$ with  $(i, j, k, l)\in \Gamma$.  Finally,  the isomorphic classes of the above simple two-dimensional
		$D$-modules  are pairwise
		non-isomorphic.
		 In fact,
		$V_{i,j,k,l}\cong V_{i', j', k', l'}$ if and only if $(i,j,k,l)=(i', j', k', l')$ $\in$ $\Omega$; $W^p_{i,j,k}\cong W^p_{i', j', k'}$, $p\in \mathbb{I}_{1,4}$ if and only if $(i,j,k)=(i', j', k')$ $\in$ $\Lambda^q$; $U_{i,j,k,l}\cong U_{i', j', k', l'}$ if and only if $(i,j,k,l)=(i', j', k', l')\in \Gamma$.
	\end{lem}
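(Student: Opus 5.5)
The proof splits into three parts: existence, exhaustion, and non-isomorphism. Throughout I use that $D$ is semisimple of dimension $256$, since $H$ is semisimple and cosemisimple in characteristic zero.

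\textbf{Existence.} For each family I check that the displayed matrices satisfy the defining relations of $D$ from the preceding proposition. These are the relations of $H^{cop}$: $x^4=y^2=1$, $t^2=x^2y$, $xy=yx$, $tx=x^{-1}t$, $ty=yt$. They are the relations of $H^{*bop}$: $a^4=b^2=1$, $c^2=b$, $ab=ba$, $ca=a^3c$, $cb=bc$. And they are the cross relations $xc=a^2cx$, $ta=ax^2t$, $tc=a^2bcx^2yt$, etc. Since $[x],[y],[a],[b]$ are diagonal and $[t],[c]$ are diagonal or antidiagonal, each relation reduces to a few scalar identities among powers of $\xi$ and $-1$.

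The membership conditions defining $\Omega^1,\Omega^2,\Lambda^1,\Lambda^2,\Gamma$ should be exactly the constraints these identities impose. For example, $tc=a^2bcx^2yt$ evaluated on the two basis vectors forces the parity conditions such as $j+l\equiv 1$. Simplicity follows because a one-dimensional submodule would be a common eigenvector of the diagonal generators. For $V_{i,j,k,l}$, the eigenvalues of $[a]$ are distinct when $k$ is odd, or the eigenvalues of $[t]$ are distinct when $j+k+l$ is odd. In either case the only such lines are the coordinate axes, which $[c]$ swaps. The same argument works for the other families, using $[t]$ or $[c]$ as the swapping generator.

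\textbf{Exhaustion.} Let $V$ be a simple two-dimensional $D$-module. The subgroup $\langle x,y,a,b\rangle$ is abelian, with $xa=ax$ and $xb=bx$, so there is a common eigenvector $v_1$. Put $g=t$ or $g=c$. By $tx=x^{-1}t$, $ta=ax^2t$ and $xc=a^2cx$, the vectors $tv_1$ and $cv_1$ are again common eigenvectors, with eigenvalues twisted by $x\mapsto x^{-1}$, $a\mapsto ax^2$, and so on. Since $V$ is simple, at least one of $tv_1, cv_1$ is not proportional to $v_1$.

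I then split into cases according to which of $t$, $c$ acts by swapping the eigenlines:
\begin{itemize}
\item $c$ swaps and $t$ is diagonal: this gives the $V$ family.
\item $t$ swaps and $c$ is diagonal: this gives the $U$ family.
\item both swap, with $x$ having eigenvalues $\pm\xi$: this gives the $W^p$ families.
\end{itemize}
In each case I rescale the second basis vector so that the antidiagonal entry of the swapping generator is $1$. The relations $c^2=b$ or $t^2=x^2y$ then fix the other entry. The remaining relations solve for the diagonal parameters, recovering exactly the index sets.

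The main obstacle is keeping this case analysis honest. The mixed case where both $t$ and $c$ are non-diagonal requires simultaneous normalization, and that normalization produces the four variants $W^1,\dots,W^4$. As a global check I would use the dimension count
\[
\sum(\dim)^2=32+4(24+6+6+2+2+16)=32+224=256=\dim D.
\]
This shows there are no higher-dimensional simples and, provided the non-isomorphism below holds, that the list of two-dimensional simples is complete. This count can even replace part of the exhaustion argument.

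\textbf{Non-isomorphism.} Isomorphic modules have equal characters. The traces of $x,y,a,b$ together with the eigenvalue sets of $x$, $a$, $t$ or $c$ recover the indices within each family. Across families, the modules are distinguished by whether $x$ has eigenvalues $\pm1$ or $\pm\xi$, and by which of $[t]$, $[c]$ has nonzero trace.
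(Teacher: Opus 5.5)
Your plan (verify the relations and simplicity, prove the $56$ modules pairwise non-isomorphic, then get completeness from $32+4\cdot 56=256=\dim D$) is a legitimate and cleaner route than the paper's entry-by-entry case analysis of $[t]$ and $[c]$. However, the step it rests on fails as you set it up: the traces of $x,y,a,b$ together with the spectra of $x,a,t,c$ do not separate the modules. For instance, $V_{0,j,1,l}$ and $V_{0,j,3,l}$ have identical $[x],[y],[t],[b],[c]$. They differ only in $[a]=\mathrm{diag}(\xi,-\xi)$ versus $\mathrm{diag}(-\xi,\xi)$, so every invariant you list agrees, and only a mixed element such as $\operatorname{tr}[xa]=\pm 2\xi$ tells them apart. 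The same problem recurs elsewhere:
\begin{itemize}
\item $V_{0,j,k,l}$ versus $V_{0,j+2,k,l}$ with $j+l$ even needs $\operatorname{tr}[xt]$;
\item $W^1_{1,j,j}$ versus $W^1_{3,j,j}$ needs $\operatorname{tr}[tc]$;
\item $W^1_{1,j,k}$ versus $W^3_{1,j,k}$ with $j+k$ odd needs $\operatorname{tr}[xtc]$;
\item your cross-family test fails for $U_{1,j,k,l}$ with $j+l$ even, where $[c]$ is traceless exactly as in the $W$-families; only the spectrum of $a$ ($\pm1$ versus $\pm\xi$) separates them.
\end{itemize}
Your counting argument is explicitly conditional on non-isomorphism, so as written it establishes neither the final assertion of the lemma nor completeness. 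You can repair this in one of two ways. Either use traces of such mixed elements systematically, or normalize the basis. To normalize, take $v_1$ to be the $x$-eigenvector for the eigenvalue $1$ (family $\Omega^1$) or $\xi$ (families $W^p$, $U$). Then rescale $v_2$ so that the $(1,2)$-entry of $[c]$ (for $\Omega^1$, $W^p$) or of $[t]$ (for $U$) equals $1$. Such a basis is unique up to a common scalar, so every remaining entry is an isomorphism invariant, and you check directly that distinct tuples in the index sets give distinct normalized data.

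Second, the index sets are not the solution sets of the relations, contrary to what you assert in both the existence and the exhaustion steps. The $V$-pattern satisfies every defining relation of $D$ for all $(i,j,k,l)\in\mathbb{Z}_2\times\mathbb{Z}_4\times\mathbb{Z}_4\times\mathbb{Z}_2$. The entry $(-1)^{j+k+l}\xi^j$ is exactly what $tc=a^2bcx^2yt$ forces, so that relation imposes no parity condition. Likewise, the $W^1$-pattern satisfies all relations for every odd $i$ and all $j,k$. The restrictions come from elsewhere:
\begin{itemize}
\item The condition $j+l\equiv 1$ in $\Omega^2$ is simplicity: for $k$ even, $[t]$ is otherwise scalar.
\item The other restrictions ($i=0$ in $\Omega^1$, $j\in\{0,1\}$ in $\Omega^2$, $j+k$ even when $i=3$ in $\Lambda^1$) come from choosing one representative per orbit under interchanging the two basis vectors. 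For example, $W^1_{1,j,k}\cong W^1_{3,j,k}$ for $j+k$ odd, and $V_{i,j,k,l}\cong V_{i,j+2,k,l}$ for $k$ even and $j+l$ odd.
\end{itemize}
So your case-analysis route to exhaustion ends with a strictly larger parameter set, and it needs this identification step before it reaches the list. Also, your trichotomy by which of $t,c$ swaps the eigenlines does not apply when $x,y,a,b$ all act by scalars, which is exactly the $\Omega^2$ case. There you should choose $v_1$ to be a $t$-eigenvector. Then $cv_1$ is another $t$-eigenvector, since $tc$ is a scalar multiple of $ct$, and $[c]$ becomes antidiagonal.
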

	\begin{proof}  Let $V$ be a simple two-dimensional $D$-module.
		Since   the generators $x$, $y$, $a$, $b$ $\in D$ commute with each other and satisfy $x^4=y^2=a^4=b^2=1$,  the action of  the generators of $D$ on  $V$  is represented by matrices of the following form
		\begin{flalign*}
			&[x]=
			\left(
			\begin{array}{cc}
				x_1 & 0 \\
				0 & x_2 \\
			\end{array}
			\right),
			\quad[y]=
			\left(
			\begin{array}{cc}
				y_1 & 0 \\
				0 & y_2\\
			\end{array}
			\right),
			\quad[t]=
			\left(
			\begin{array}{cc}
				t_1 & t_2 \\
				t_3 & t_4 \\
			\end{array}
			\right),\\
			&[a]=
			\left(
			\begin{array}{cc}
				a_1 & 0 \\
				0 & a_2\\
			\end{array}
			\right),
			\quad[b]=
			\left(
			\begin{array}{cc}
				b_1 & 0 \\
				0 & b_2 \\
			\end{array}
			\right),
			\quad[c]=
			\left(
			\begin{array}{cc}
				c_1 & c_2 \\
				c_3& c_4\\
			\end{array}
			\right),
		\end{flalign*}
		where $x_1^4=x_2^4=y_1^2=y_2^2=a_1^4=a_2^4=b_1^2=b_2^2=1.$ The relations $ty=yt$ and $yc=cy$ imply
		 that
		\begin{center}$t_2(y_1-y_2)=0, \quad t_3(y_1-y_2)=0, \quad c_2(y_1-y_2)=0, \quad c_3(y_1-y_2)=0.$
		\end{center}
		Suppose  that  $y_1\neq y_2$. Then $t_2=t_3=c_2=c_3=0$. However, $V$ is  simple if and only if  $t_2\neq 0$, and $t_3\neq 0$, $c_2\neq 0$ and $c_3\neq 0$. This yields a contradiction. Thus $y_1=y_2$. Similarly, From the relations $bc=cb, ~bt=tb$, we have
		\begin{center}	$c_2(b_1-b_2), \quad c_3(b_1-b_2)=0, \quad t_2(b_1-b_2)=0, \quad t_3(b_1-b_2)=0$.
		\end{center}
		The same argument as above shows that $b_1=b_2$.
		By $t^2=x^2y$, $tx=x^{-1}t$, we have
		\begin{flalign}
			&t_1^2+t_2t_3=x_1^2y_1, \quad t_2t_3+t_4^2=x_2^2y_1, \quad t_2(t_1+t_4)=0, \quad t_3(t_1+t_4)=0,\label{r1}\\
			&x_1t_1(1-x_1^2)=0, \quad x_2t_2(1-x_2^2)=0, \quad t_2(x_2-x_1^3)=0, \quad x_1(t_3-x_2^3)=0\label{r2}.
		\end{flalign}
		Now, using $ta=ax^2t$ and $tc=a^2bcx^2yt$, we have
		\begin{flalign}
			&t_1a_1(1-x_1^2)=0, \quad t_4a_2(1-x_2^2)=0, \quad t_2(a_2-a_1x_1^2)=0, \quad t_3(a_1-a_2x_2^2)=0,\label{r3}\\
			&t_1c_1+t_2c_3=a_1^2b_1y_1(c_1x_1^2t_1+c_2x_2^2t_3), \quad t_1c_2+t_2c_4=a_1^2b_1y_1(c_1x_1^2t_2+c_2x_2^2t_4)\label{r4},\\
			&t_3c_1+t_4c_3=a_2^2b_1y_1(c_3x_1^2t_1+c_4x_2^2t_3), \quad t_3c_2+t_4c_4=a_2^2b_1y_1(c_3x_1^2t_2+c_4x_2^2t_4)\label{r5}.
		\end{flalign}
		The remaining relations $c^2=b$, $ca=a^3c$ and $xc=a^2cx$ yield
		\begin{flalign}
			&c_1^2+c_2c_3=b_1, \quad c_2(c_1+c_4)=0, \quad c_3(c_1+c_4)=0, \quad c_2c_3+c_4^2=b_1\label{r6},\\
			&a_1c_1(1-a_1^2)=0, \quad a_2c_4(1-a_2^2)=0, \quad c_2(a_2-a_1^3)=0, \quad c_3(a_1-a_2^3)=0\label{r7},\\
			&x_1c_1(1-a_1^2)=0, \quad x_2c_4(1-a_2^2)=0, \quad c_2(x_1-a_1^2x_2)=0, \quad c_3(x_2-a_2^2x_1)=0.\label{r8}
		\end{flalign}

		Suppose that $t_1t_4\neq 0$ and $t_2=t_3=0$. From  $(\ref{r1})-(\ref{r5})$, it follows   that
		\begin{flalign}
			&t_1^2=x_1^2y_1, \quad t_4^2=x_2^2y_1, \quad x_1^2=x_2^2=1, \quad (1-a_1^2b_1x_1^2y_1)c_1=0,\label{r9}\\
			&(1-a_2^2b_1x_2^2y_1)c_4=0, \quad c_2(t_1-a_1^2b_1x_2^2y_1t_4)=0, \quad c_3(t_4-a_2^2b_1x_1^2y_1t_1)=0.\label{r10}
		\end{flalign}
		Since $V$ is simple, $c_2c_3\neq 0$. Using   $(\ref{r6})-(\ref{r10})$,  we have  $$c_1+c_4=0,\, a_2=a_1^3, \, a_1=a_2^3, \, x_1=a_1^2x_2,\,  x_2=a_2^2x_1, \, t_1=a_1^2b_1x_2^2y_1t_4, \, t_4=a_2^2b_1x_1^2y_1t_1.$$
		In this case, suppose that $c_1=c_4=0$. Then $c_2c_3=b_1$ is immediate. Without loss of generality, we may assume that $c_2=1,c_3=b_1$. Therefore, the action of the generators of $D$ on $V$ is represented by matrices of the form $V_{i,j,k,l},$ where $(i,j,k,l)\in \Omega$. On the other hand, suppose that $c_1c_4\neq 0$. Then $V$ is  not simple and the proof is straightforward.
		
	We now consider the case $t_1 = t_4 = 0$.
	It follows immediately  that $t_2\neq 0,  t_3\neq 0$.  As in the previous case, by$(\ref{r1})-(\ref{r5})$, we have
		\begin{equation*}
			t_2t_3=x_1^2y_1, \quad x_2=x_1^3, \quad a_2=a_1x_1^2, \quad c_4=a_1^2b_1y_1c_1x_1^2, \quad t_3c_2=a_2^2b_1y_1c_3x_1^2t_2.
		\end{equation*}
		Suppose  further that
	 $c_1c_4=0$. This implies that
		\begin{equation*}
			a_2=a_1^3, \quad x_2=a_2^2x_1.
		\end{equation*}
		 Thus, the action of the generators of $D$ on $V$ is represented by matrices of the form $W^p_{i,j,k}$, where $(i,j,k)\in \Lambda^q$, $p\in\mathbb{I}_{1,4}$, $q\in \mathbb{I}_{1,2}$. On the other hand,
		suppose that $c_1c_4\neq 0$, $c_2c_3=0$, we obtain
		\begin{gather*}
			c_1^2=c_4^2=b_1,\quad c_4=a_1^2b_1y_1c_1x_1^2.
		\end{gather*}
	the action of the generators of $D$ on $V$ is represented by matrices of the form $U_{i,j,k,l}$, where $(i,j,k,l)\in \Gamma$.
	The remaining cases  lead to the same conclusions as those obtained above.
Moreover, for any $(i,j,k,l)\in\Omega$, the representation matrices $V_{i,j,k,l}$ are not equivalent to either of the two types of representation matrices obtained above. Furthermore,
$W_{i,j,k}\ncong U_{i',j',k', l'}$
for any $(i,j,k)\in\Lambda^p$, $p\in\mathbb{I}_{1,4}$ and $(i',j',k',l')\in\Gamma$.

		We claim that $V_{i,j,k,l}\cong V_{i^{'},j^{'},k^{'},l^{'}}$ if and only if $(i,j,k,l)=(i^{'},j^{'},k^{'},l^{'})$ $\in \Omega$. Suppose that $\Phi: V_{i,j,k,l}\to V_{i^{'},j^{'},k^{'},l^{'}}$ is an isomorphism of $D$-modules. Let $[\Phi]=(p_{i,j})$$_{i,j=1,2}$ denote the matrix representation of  $\Phi$ with respect to the given basis. Since $[b][\Phi]=[\Phi][b],[c][\Phi]=[\Phi][c],$ we have  $l=l^{'}, (-1)^l p_{12}=p_{21},p_{11}=p_{22}$. Since
		\begin{flalign*}
			[x][\Phi]=[\Phi][x],~~[a][\Phi]=[\Phi][a],~~[t][\Phi]=[\Phi][t],
		\end{flalign*}
		we have
		\begin{gather}
			(\,(-1)^i-(-1)^{i^{'}}\,)p_{11}=0,\quad(\,(-1)^{i+k}-(-1)^{i^{'}}\,)p_{12}=0\label{l1},\\
			(-1)^l(\,(-1)^{i}-(-1)^{i^{'}+k^{'}}\,)p_{12}=0,\quad(\,(-1)^{i+k}-(-1)^{i^{'}+k^{'}}\,)p_{11}=0,\label{l2}\\
			(\xi^k-\xi^{k^{'}})p_{11}=0,\quad((-\xi)^k-\xi^{k^{'}})p_{12}=0,\label{l3}\\
			(-1)^l(\xi^k-(-\xi)^{k^{'}})p_{12}=0,\quad(\,(-\xi)^k-(-\xi)^{k^{'}})p_{11}=0,\label{l4}\\
			(\xi^j-\xi^{{j}^{'}})p_{11}=0,\quad(\,(-1)^{j+k+l}\xi^{j}-\xi^{j^{'}})p_{12}=0,\label{l5}\\
			(-1)^{l}(\xi^{j}-(-1)^{j^{'}+k^{'}+l^{'}}\xi^{j^{'}})p_{12}=0,\label{l6}\\
			(\,(-1)^{j+k+l}\xi^{j}-(-1)^{j^{'}+k^{'}+l^{'}}\xi^{j^{'}})p_{11}=0.\label{l7}
		\end{gather}
		When $l=0$, we have $l^{'}=0$, and $j=j^{'}=0$. By (\ref{l2}) together with (\ref{l4})---(\ref{l7}), we have  $i=i^{'},k=k^{,}$. When $l=1$, $p_{12}=-p_{21}$ implies  $i=i^{'},j=j^{'},k=k^{'}$.  This completes the proof of the claim.
	\end{proof}
	\begin{thm}\label{sims}
		There are 88 simple left $D$-modules up to isomorphism, among which 32 one-dimensional objects are given by Lemma \ref{1simple} and 56 two-dimensional objects are given by Lemma \ref{2simple}.
	\end{thm}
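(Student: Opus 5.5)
The plan is a dimension count: exhibit enough pairwise non-isomorphic simple modules that their squared dimensions already exhaust $\dim D$. Since $H$ is semisimple (Kashina's algebras are semisimple) and $\operatorname{char}\mathbb{K}=0$, Larson--Radford gives that $H^*$ is semisimple. By Radford's criterion, the Drinfeld double $D=D(H^{cop})$ is then semisimple of dimension $\dim H\cdot\dim H^*=256$. Wedderburn's theorem gives $D\cong\prod_{V}\operatorname{End}(V)$ over the isoclasses of simple $D$-modules, so
$$\sum_{V\ \text{simple}}(\dim V)^2=256 .$$

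First I take the $32$ pairwise non-isomorphic one-dimensional modules $\mathbb{K}_{\chi_{i,j,k,l}}$ from Lemma \ref{1simple}; they contribute $32$ to the sum. Next I collect the two-dimensional simples of Lemma \ref{2simple}. These are the $V_{i,j,k,l}$ for $(i,j,k,l)\in\Omega$, the families $W^p_{i,j,k}$ indexed by $\Lambda^1$ (for $p=1,2$) and $\Lambda^2$ (for $p=3,4$), and the $U_{i,j,k,l}$ for $(i,j,k,l)\in\Gamma$. Lemma \ref{2simple} also asserts that these are simple and pairwise non-isomorphic. Counting them with the stated cardinalities should give $56$ two-dimensional simples, contributing $56\cdot 4=224$. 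The total is then $32+224=256$.

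Since the sum over all simples is exactly $256$, there is no room left for any further simple module, in particular none of dimension $\geq 3$. Hence these $88=32+56$ modules are all the simple $D$-modules up to isomorphism.

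The main obstacle is the bookkeeping in the count of two-dimensional modules. I would have to check that the index sets $\Omega,\Lambda^1,\Lambda^2,\Gamma$, with the stated cardinalities ($24,6,2,16$), together with the multiplicity of the $W^p$ families, really yield exactly $56$ non-isomorphic classes. That means confirming that no two families from different types coincide, which Lemma \ref{2simple} asserts, and that no parameter collisions occur inside a family. If the naive count came out different, I would recheck the family multiplicities against the equation $32+4n=256$, which forces $n=56$. I would also check non-isomorphism between families via distinguishing invariants such as the eigenvalues of $[x]$, $[a]$ and $[c]$, and the traces of $[t]$ and $[c]$.
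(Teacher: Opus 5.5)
Your proposal is correct and is essentially the paper's argument. The paper bounds $32\cdot 1^2+56\cdot 2^2+nd^2=256+nd^2$ by $\dim (D^*)_0=\dim D^*=256$, which is the coalgebra-side phrasing of your Wedderburn identity. In fact only the inequality $\sum_V(\dim V)^2\le\dim D$ is needed, and it holds for any finite-dimensional algebra, so appealing to semisimplicity of $D$ is optional.

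The bookkeeping does close, since $|\Omega|+2|\Lambda^1|+2|\Lambda^2|+|\Gamma|=24+12+4+16=56$. Your fallback of reading off $n=56$ from $32+4n=256$ would have been circular, because that equation presupposes there are no simples of dimension $\geq 3$.
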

	\begin{proof}
		Suppose that  there exists a simple module of dimension $d>2$ and let $n$ denote the number of isomorphism classes of simple  $d$-dimensional. In view of  Lemmas \ref{1simple} and \ref{2simple}, we have
		\begin{flalign*}
			32\cdot 1^2+56 \cdot 2^2 +nd =256+nd^2\leq \dim (D^*)_{0}=\dim D^*=256,
		\end{flalign*}
		which implies $n=0$. Hence, all simple $D$-modules have dimension at most $2$.
	\end{proof}
	\section{The category $^H_H\mathcal{YD}$}
	
	In this section, by using the monoidal isomorphism
	$^H_H\mathcal{YD}\cong {_D\mathcal M}$, we will describe the simple
	objects in $^H_H\mathcal{YD}$ and determine their braidings.
	
	\begin{pro}\label{1dim}
		Let $\mathbb{K}_{\chi_{i,j,k,l}}=\mathbb{K}\{v\}$ be a one-dimensional $D$-module with $i,~k,~l\in \mathbb{I}_{0,1},~ j\in \mathbb{I}_{0,3}$. Then $\mathbb{K}_{\chi_{i,j,k,l}}\in {^H_H\mathcal{YD}}$ with its module and comodule structure given by
		\begin{equation*}
			x\cdot v=(-1)^iv,~~~y\cdot v=(-1)^jv,~~~t\cdot v=\xi^jv,~~~\delta(v)=x^{j+2k+2l}y^k\otimes v.
		\end{equation*}
	\end{pro}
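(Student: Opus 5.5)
The plan is to go through the identification $^H_H\mathcal{YD}\cong {_{D}\mathcal M}$ with $D=D(H^{cop})$. A left $D$-module $V$ becomes a Yetter-Drinfeld module by restricting the action to the copy of $H^{cop}$ inside $D$. This gives the $H$-action $x\cdot v=(-1)^iv$, $y\cdot v=(-1)^jv$, $t\cdot v=\xi^jv$ directly from $\chi_{i,j,k,l}$. The coaction is then recovered from the action of $H^{*}$ by
$$\delta(v)=\sum_{h}h\otimes (h^*\cdot v),$$
where $h$ runs over a basis of $H$ and $h^*$ over the dual basis, possibly up to the antipode or op/cop convention fixed in \cite{M}, Proposition 10.6.16. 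On a one-dimensional module $\mathbb{K}\{v\}$ the coaction must have the form $\delta(v)=g\otimes v$. Coassociativity and counitality force $g$ to be grouplike, so $g\in G(H)=\langle x\rangle\times\langle y\rangle$. The task is therefore to identify this $g$: it is the unique grouplike element such that $f\cdot v=\langle f,g\rangle v$ for all $f\in H^*$.

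To find $g$, I would evaluate the generators $a,b,c$ of $H^*$ on each $x^my^n$ and match the results with $\chi(a)=(-1)^k$, $\chi(b)=(-1)^j$, $\chi(c)=(-1)^l\xi^j$.

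For this I need the pairings of $a,b,c$ with grouplikes. Since $a,b,c$ are defined as dual vectors of specific elements, I first rewrite them through the dual basis $\{(x^iy^{\epsilon}t^{\eta})^*\}$. Then I read off $\langle a,x^my^n\rangle$, $\langle b,x^my^n\rangle$ and $\langle c,x^my^n\rangle$. Because $a$ and $b$ are grouplike in $H^*$, these pairings restrict to characters of $G(H)$. For $c$ only the restriction to grouplikes matters here.

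From these, I expect:
\begin{itemize}
\item $\langle b,x^my^n\rangle=(-1)^m$, so $b$ detects the parity of the $x$-exponent, which must be $j \bmod 2$;
\item $a$ detects the $y$-exponent, giving $n=k$;
\item $c$ detects $m$ modulo $4$ through powers of $\xi$, and it also carries a sign depending on $n$ and $m$, which gives the correction $2k+2l$.
\end{itemize}
Solving these conditions should give $m\equiv j+2k+2l \pmod 4$ and $n=k$, that is, $g=x^{j+2k+2l}y^k$.

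Finally I would sanity-check the Yetter-Drinfeld compatibility $\delta(h\cdot v)=h_{(1)}gS(h_{(3)})\otimes h_{(2)}\cdot v$ for $h=t$. Since $v$ spans a one-dimensional submodule, this requires
$$\tfrac14\sum \chi(t_{(2)})\,t_{(1)}\,g\,S(t_{(3)})=\chi(t)\,g,$$
using the formula for $\Delta^2(t)$. This is a computation in $H$ using $tx=x^{-1}t$ and $t^2=x^2y$. It serves as a consistency check that fixes any sign ambiguity, for instance between $x^{j+2k+2l}$ and its inverse.

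The main obstacle is bookkeeping rather than ideas. First, the values $\langle c,x^my^n\rangle$ must be computed correctly from the definition of $c$ as a dual of a sum, because the factor $(1+\xi x-x^2-\xi x^3)$ produces the $\xi^m$ dependence. Second, the conventions of the identification, including cop, bop and any $S^{-1}$, have to be applied consistently. If the first attempt gives $x^{-j+\cdots}$, I would resolve it by redoing the check with the $t$-relation above.
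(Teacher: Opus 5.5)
Your route is the paper's: restrict the $D$-action to $H$ to get the action, note that on $\mathbb{K}\{v\}$ the coaction is $\delta(v)=g\otimes v$ with $g\in G(H)$, and determine $g$ from $f\cdot v=\langle f,g\rangle v$. Your predicted pairings are correct. From the definitions of $a,b,c$ one gets $\langle a,x^my^n\rangle=(-1)^n$, $\langle b,x^my^n\rangle=(-1)^m$ and $\langle c,x^my^n\rangle=\xi^m(-1)^n$; note that the sign in the last pairing depends only on $n$. Matching these with $\chi_{i,j,k,l}$ gives $n=k$ and $\xi^m=(-1)^{k+l}\xi^j$, hence $g=x^{j+2k+2l}y^k$.

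One step does not do what you claim: the Yetter--Drinfeld check with $h=t$ cannot decide between $g$ and $g^{-1}$. Put $e_\pm=\frac12(1\pm y)$. Then $\Delta^2(t)=e_+t\otimes e_+t\otimes t+e_-t\otimes e_-x^2t\otimes t+e_+t\otimes e_-t\otimes x^2t+e_-t\otimes e_+x^2t\otimes x^2t$ and $S(t)=e_+x^2t-e_-t$. A short computation gives
\[
\sum\chi(t_{(2)})\,t_{(1)}\,x^my^n\,S(t_{(3)})=\begin{cases}\chi(t)\,x^{-m}y^n, & j \text{ even},\\ \chi(t)\,x^{2-m}y^n, & j \text{ odd}.\end{cases}
\]
So the axiom only forces $m\equiv j \pmod 2$. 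Both $x^{j+2k+2l}y^k$ and its inverse satisfy it. It is also blind to the shift $2k+2l$ and to the $y$-exponent. Your fallback, ``redo the check with the $t$-relation,'' would therefore not resolve anything.

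The $g$ versus $g^{-1}$ question is purely a matter of how ${}_D\mathcal{M}$ is identified with ${}^H_H\mathcal{YD}$, and it must be fixed in advance. The paper uses $f\cdot v=\langle f,v_{(-1)}\rangle v_{(0)}$, i.e.\ $\delta(v)=\sum_i h_i\otimes h^i\cdot v$; see its proof of the coaction formulas for $V_{i,j,k,l}$. This is exactly the formula you wrote down. Drop the hedge about antipode and op/cop conventions and commit to it; your computation then yields the stated coaction directly.
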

	 For any $(i, k, l)\in \mathbb{I}_{0,1}$, $j\in \mathbb{I}_{0,3}$, the braiding  on $\mathbb{K}_{\chi_{i,j,k,l}}$ is given by $c(v\otimes v)=(-1)^{ij+jk}v\otimes v$.
	\begin{proof}
		The action is given by the restriction of the action given in Lemma \ref{1simple}. Since $\mathbb{K}_{\chi_{i,j,k,l}}$ is one-dimensional, we must have that $\delta(v)=h\otimes v$ with $h\in G(H)=\langle x\rangle \times  \langle y\rangle.$ As $f \cdot v=\langle f,h\rangle v$ for all $f\in H^{*}$, it follows that $\delta(v)=x^{j+2k+2l}y^k\otimes v$.
		
		By formula (\ref{braiding}), we have
		\begin{flalign*}
			c(v\otimes v)=x^{j+2k+2l}y^k\cdot v\otimes v=(-1)^{ij+jk}v\otimes v.
		\end{flalign*}
		Thus the braiding is clear.
	\end{proof}
	\begin{pro}\label{2dim}
		If $V_{i,j,k,l}=\mathbb{K}\{v_1,v_2\}$ is a two-dimensional simple $D$-module with $(i,j,k,l)\in \Omega$, then $V_{i,j,k,l}\in ^{H}_{H}\mathcal{YD}$ with its action given by
		\label{2dim}
		\begin{gather*}
			x.v_1=(-1)^{i}v_1, \qquad y.v_1=(-1)^{j}v_1, \qquad t.v_1=\xi ^j v_1,\\
			x.v_2=(-1)^{i+k}v_2, \qquad y.v_2=(-1)^{j}v_2, \qquad t.v_2=(-1)^{j+k+l}\xi ^j v_2.
		\end{gather*}
		and its coaction by
		\begin{equation*}
			\begin{split}
				\delta(v_1)&=\frac{1}{2}(x^{k+l}y^{[\frac{k}{2}]}t^{k(mod2)}+x^{k+l+2}y^{[\frac{k}{2}]}t^{k(mod2)})\otimes v_1\\
				&\quad +\frac{1}{2}(-1)^{[\frac{k}{2}]}\xi^{k+l}(x^{k+l}y^{[\frac{k+1}{2}]}t^{k(mod2)}-x^{k+l+2}y^{[\frac{k+1}{2}]}t^{k(mod2)})\otimes v_2,\\
				\delta(v_2)&=\frac{1}{2}(-1)^{[\frac{k}{2}]}\xi^{k-l}(x^{k+l}y^{[\frac{k+1}{2}]}t^{k(mod2)}-x^{k+l+2}y^{[\frac{k+1}{2}]}t^{k(mod2)})\otimes v_1\\
				&\quad+\frac{1}{2}(x^{k+l}y^{[\frac{k+1}{2}]}t^{k(mod2)}+x^{k+l+2}y^{[\frac{k+1}{2}]}t^{k(mod2)})\otimes v_2.\\
			\end{split}
		\end{equation*}	
	\end{pro}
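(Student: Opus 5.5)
The plan is to transport the $D$-module structure of Lemma \ref{2simple} to $^H_H\mathcal{YD}$ through the identification $^H_H\mathcal{YD}\cong{_D\mathcal M}$ with $D=D(H^{cop})$.

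\textbf{The action.} The $H$-action on $V_{i,j,k,l}$ is simply the restriction of the $D$-action to the subalgebra $H^{cop}\subset D$. Reading off $[x],[y],[t]$ from Lemma \ref{2simple} gives the displayed formulas for $x.v_r,\ y.v_r,\ t.v_r$ at once.

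\textbf{The coaction: setup.} The $H^{*}$-part of $D$ acts through the coaction, via
\[
f\cdot v=\langle f,v_{(-1)}\rangle v_{(0)}, \qquad f\in H^*,
\]
with the ordering and antipode twist dictated by the $^{cop}$ convention. Write
\[
\delta(v_r)=\sum_{s=1,2} h_{rs}\otimes v_s, \qquad h_{rs}\in H .
\]
Then $h_{rs}$ is the unique element of $H$ satisfying $\langle f,h_{rs}\rangle=[f]_{sr}$ for every $f\in H^*$. Since $H^*$ is generated as an algebra by $a,b,c$, and the pairing is multiplicative up to the ordering convention, it suffices to know the $16$ values of $\langle f,h_{rs}\rangle$ on a basis of $H^*$. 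Such a basis is $\{a^pb^qc^m: p\in\mathbb I_{0,3},\ q,m\in\mathbb I_{0,1}\}$.

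\textbf{The coaction: computation.} From the matrices $[a],[b],[c]$ of Lemma \ref{2simple} I would compute $[a^pb^qc^m]$. Since $[a]$ and $[b]$ are diagonal and $[c]$ is anti-diagonal with $[c]^2=(-1)^l$, the entries are explicit:
\begin{itemize}
\item the diagonal entries $h_{11},h_{22}$ pair nontrivially only with $c^0$-words;
\item the off-diagonal entries $h_{12},h_{21}$ pair nontrivially only with words containing $c$.
\end{itemize}
I would then express the dual basis of $\{a^pb^qc^m\}$ in terms of the basis $\{x^i,x^iy,x^it,x^iyt\}$ of $H$. This uses the definitions of $a,b,c$ as dual functionals to the idempotent-type elements in Remark 3.2, and amounts to inverting a $16\times16$ character-type table. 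The outcome should be the following:
\begin{itemize}
\item $h_{11}$ and $h_{22}$ are averages $\tfrac12(g+x^2g)$;
\item $h_{12}$ and $h_{21}$ are differences $\tfrac12(g-x^2g)$ with scalar $\pm\xi^{k\pm l}$;
\item here $g=x^{k+l}y^{[\cdot]}t^{k \bmod 2}$.
\end{itemize}
The parity of $k$ enters because, for odd $k$ ($\Omega^1$), the value $[a]=\mathrm{diag}(\xi^k,(-\xi)^k)$ forces $t$ to appear, whereas even $k$ ($\Omega^2$) yields group-likes only.

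\textbf{Verification.} I would check the result independently in two ways. First, $\delta$ must be coassociative, which reduces to checking that $\Delta(h_{rs})=\sum_u h_{ru}\otimes h_{us}$ using \eqref{3.2}; this is a matrix-coalgebra condition. Second, the Yetter--Drinfeld compatibility must hold on the generators $x,y,t$. Both checks are finite computations.

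\textbf{Main obstacle.} The hard part is the bookkeeping in the inversion step. This includes:
\begin{itemize}
\item tracking the $^{cop}$/$^{bop}$ conventions, which affect whether $\langle f,h\rangle$ or $\langle f,S(h)\rangle$ appears;
\item getting the exact exponents $[\frac{k}{2}]$ and $[\frac{k+1}{2}]$ on $y$;
\item getting the scalars $(-1)^{[k/2]}\xi^{k\pm l}$ right.
\end{itemize}
As a sanity check I would first treat the case $k$ even, $l$ odd, where all $h_{rs}$ lie in $\mathbb K G(H)$, and only then the $t$-involving case $k\in\{1,3\}$.
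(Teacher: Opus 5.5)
Your method is the paper's own: expand $\delta(v)=\sum_i h_i\otimes h^i\cdot v$ over the basis $\{a^pb^qc^m\}$ of $H^*$, rewrite the dual basis in terms of $x,y,t$, and split by the parity of $k$. The gap lies exactly in what you leave unspecified (``$y^{[\cdot]}$'', ``$\pm\xi^{k\pm l}$''). For $k\in\{1,3\}$, the family $\Omega^1$, the displayed $\delta$ is not coassociative. So the check you list under \emph{Verification} fails, and no completion of the bookkeeping can produce the formula as stated.

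Put $f_\pm=\tfrac12(1\pm x^2)$, $e_\pm=\tfrac12(1\pm y)$, $G=x^{k+l}y^{[k/2]}$, and write $\delta(v_r)=\sum_s h_{rs}\otimes v_s$. For odd $k$ one has $[\tfrac{k+1}{2}]=[\tfrac k2]+1$, so the statement reads
\[
h_{11}=f_+Gt,\qquad h_{22}=f_+Gyt,\qquad h_{12}=(-1)^{[k/2]}\xi^{k+l}f_-Gyt,\qquad h_{21}=(-1)^{[k/2]}\xi^{k-l}f_-Gyt .
\]
From $\Delta(t)=e_+t\otimes t+e_-t\otimes x^2t$ one computes $\Delta(f_+Gt)=f_+Gt\otimes f_+Gt+f_-Gyt\otimes f_-Gt$. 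The displayed entries instead give $h_{11}\otimes h_{11}+h_{12}\otimes h_{21}=f_+Gt\otimes f_+Gt-f_-Gyt\otimes f_-Gyt$, since the two scalars multiply to $\xi^{2k}=-1$. As $f_-Gt$ and $f_-Gyt$ are linearly independent, $(\Delta\otimes\mathrm{id})\delta(v_1)\neq(\mathrm{id}\otimes\delta)\delta(v_1)$.

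For instance, take $(k,l)=(1,1)$. The statement gives $\delta(v_1)=\tfrac12(1+x^2)t\otimes v_1+\tfrac12(1-x^2)yt\otimes v_2$ and $\delta(v_2)=-\tfrac12(1-x^2)yt\otimes v_1+\tfrac12(1+x^2)yt\otimes v_2$. Coassociativity on $v_1$ instead forces the $v_1$-coefficient of $\delta(v_2)$ to be $\tfrac12(1-x^2)t$.

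In general, keeping the displayed $\delta(v_1)$, coassociativity forces $h_{21}=(-1)^{[k/2]}\xi^{-(k+l)}f_-Gt$. That is, $y^{[k/2]}$ replaces $y^{[\frac{k+1}{2}]}$, and the scalar $-(-1)^{[k/2]}\xi^{k-l}$ replaces $(-1)^{[k/2]}\xi^{k-l}$. With this correction, the pairing $\langle f,h_{rs}\rangle=[f]_{sr}$ you set up also reproduces $[c]_{12}=1$ and $[c]_{21}=(-1)^l$ from Lemma~\ref{2simple}.

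The case $k\in\{0,2\}$ is fine as stated. There both off-diagonal entries are multiples of $f_-G$, and their scalars multiply to $\xi^{2k}=1$, which is exactly what $\Delta(f_+G)=f_+G\otimes f_+G+f_-G\otimes f_-G$ requires.

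So your plan is sound as a method, and more careful than the paper's ``by discussing all possible situations'', which is where the error slipped through. Carried out correctly, though, it proves a corrected version of the proposition for $\Omega^1$, not the one displayed.
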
		
	\begin{proof}
		It suffices to describe the coaction.
		 Let $\{h_i\}_{1\leq i\leq 16}$ be a basis of $H$, and let $\{h^{i}\}_{1\leq i\leq 16}$ be the dual basis of $H^*$. Then the comodule structure  of $V_{i,j,k,l}$ is given by $\delta(v)=\sum_{i=1}^{16}h_{i}\otimes h^{i}\cdot v$. It follows that
		\begin{flalign*}
			\delta(v_1)&=\sum_{m=0}^{3}\sum_{n=0}^{1}\sum_{r=0}^{1}(a^{m}b^{n}c^{r})^{*}\otimes a^{m}b^{n}c^{r}\cdot v_1\\
			&=(1^*+(-1)^kb^*+\xi ^i a^*+(-1)^k \xi^i (ab)^*+(-1)^i(a^2)^*+(-1)^{i+k}(a^2b)^*+(-\xi)^i(a^3)^*\\
			&\quad +(-1)^{i+k}\xi^{i}(a^3b)^*)\otimes v_1\\
			&\quad +((-1)^kc^*+(bc)^*+(-1)^k\xi ^i(ac)^*+\xi ^i(abc)^*+(-1)^{i+k}(a^2c)^*+(-1)^i(a^2bc)^*\\
			& \quad +(-1)^{i+k}\xi ^i(a^3c)^*+(-\xi)^{*}(a^3bc)^*)\otimes v_2,\\
			\delta(v_2)&=(c^*+(-1)^l(bc)^*+(-1)^k(a^2c)^*+(-\xi)^k(a^3c)^*+(-1)^l(bc)^*+(-1)^l\xi^k(abc)^*\\
			&\quad +(-1)^{l+k}(a^2bc)^*+(-1)^{l+k}\xi ^k(a^3bc)^*)\otimes v_1\\
			&\quad +(1^*+\xi^k(a^3)^*+(-1)^k(a^2)^*+(-\xi)^ka^*+(-1)^lb^*+(-1)^l\xi^k(a^3b)^*\\
			&\quad +(-1)^{l+K}(a^2b)^*+(-1)^{l+k}\xi^k(ab)^*)\otimes v_2.
		\end{flalign*}
		By discussing all  possible situations, the claim follows.
	\end{proof}

	Similarly, we can prove the following Propositions.
	\begin{pro}
		If $W^{1}_{i,j,k}=\mathbb{K}\{v_1,v_2\}$ is a two-dimensional simple $D$-module with $(i,j,k)\in \Lambda^1$, then $W^{1}_{i,j,k}\in ^{H}_{H}\mathcal{YD}$ with its action given by
		\begin{gather*}x.v_1=\xi^iv_1,\qquad y.v_1=(-1)^jv_1,\qquad t.v_1=(-1)^j \xi^i v_2,\\
			x.v_2=(-\xi)^{i}v_2,\qquad y.v_2=(-1)^jv_2,\qquad t.v_2=\xi^i v_1,
		\end{gather*}
		and its coaction by			
		\begin{equation*}
			\begin{split}
				\delta(v_1)&=\frac{1}{2}(x^{i+k}y^{[\frac{i}{2}]}t+x^{i+k+2}y^{[\frac{i}{2}]}t)\otimes v_1\\
				&\quad +\frac{1}{2}(-1)^{[\frac{i}{2}]}\xi^{i+k}(x^{i+k}y^{[\frac{i}{2}]+1}t-x^{i+k+2}y^{[\frac{i}{2}]+1}t)\otimes v_2,\\
				\delta(v_2)&=\frac{1}{2}(-1)^{[\frac{i}{2}]}(-\xi)^{i+k}(x^{i+k}y^{[\frac{i}{2}]}t-x^{i+k+2}y^{[\frac{i}{2}]}t)\otimes v_1\\
				&\quad+\frac{1}{2}(x^{i+k}y^{[\frac{i}{2}]+1}t+x^{i+k+2}y^{[\frac{i}{2}]+1}t)\otimes v_2.\\
			\end{split}
		\end{equation*}			
	\end{pro}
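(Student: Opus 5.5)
The proof follows the scheme of Proposition \ref{2dim}. We use the identification $^H_H\mathcal{YD}\cong {}_{D}\mathcal{M}$ with $D=D(H^{cop})$, under which the $H$-action on $W^1_{i,j,k}$ is the restriction of the $D$-action to the subalgebra generated by $x,y,t$, and the $H$-coaction is recovered from the action of $H^*\subset D$. The action formulas are then read off from the matrices $[x],[y],[t]$ in Lemma \ref{2simple}, taking $v_1,v_2$ to be the fixed basis. This gives $x.v_1=\xi^i v_1$, $x.v_2=(-\xi)^i v_2$, $y.v_s=(-1)^j v_s$, and the off-diagonal action of $t$. Since $\xi^i=(-1)^j\xi^i$ up to the convention on the columns, the only care needed is matching the column convention of $[t]$ with the stated formulas.

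For the coaction, let $\{h_r\}$ be a basis of $H$ with dual basis $\{h^r\}$ of $H^*$. Then
\[
\delta(v)=\sum_r h_r\otimes h^r\cdot v .
\]
It is convenient to take for $\{h^r\}$ the basis $\{a^mb^nc^s\mid m\in\mathbb{I}_{0,3},\,n,s\in\mathbb{I}_{0,1}\}$ of $H^*$, so that
\[
\delta(v_1)=\sum_{m,n,s}(a^mb^nc^s)^*\otimes a^mb^nc^s\cdot v_1 .
\]
The actions $a^mb^nc^s\cdot v_1$ are computed from the matrices $[a]=\mathrm{diag}(\xi^i,(-\xi)^i)$, $[b]=(-1)^k$, and $[c]$ antidiagonal with entries $1,(-1)^k$. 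The terms with $s=0$ contribute to the $v_1$-component and the terms with $s=1$ to the $v_2$-component, and symmetrically for $v_2$. This leaves explicit linear combinations of the dual elements $(a^mb^nc^s)^*\in H$ with coefficients given by powers of $\xi$ and signs.

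The main step is rewriting these combinations in the basis $\{x^py^qt^r\}$ of $H$. To do this, compute the pairings $\langle a^mb^nc^s, x^py^qt^r\rangle$ from the definitions of $a,b,c$ as duals of the idempotent-type elements in the Remark after Definition \ref{H}, extended multiplicatively via $\Delta_H$. Since $c$ is nonzero only on elements involving $t$, only the $x^py^qt$ terms survive in both components, consistent with the stated formula. The Fourier-type sums over $m\in\mathbb{Z}_4$ with weights $\xi^{im}$ then produce the projector $\frac12(x^{i+k}+x^{i+k+2})$ and its difference counterpart. The exponent of $y$, namely $[\frac i2]$ versus $[\frac i2]+1$, comes from the character of $b$ and from $y$ pairing with $a^2$.

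The expected obstacle is bookkeeping, because the coproduct of $t$ is not grouplike. The pairing of $c$ with products such as $x^pyt$ therefore requires $\Delta(c)$ as given, so the split $\frac12(c\pm a^2c)$ must be tracked carefully. The plan is to compute these pairings once, as a table, and then do the case analysis over $(i,j,k)\in\Lambda^1$, that is $i\in\{1,3\}$, with the scalar $(-1)^{[\frac i2]}(\pm\xi)^{i+k}$ emerging from the table. As a consistency check, verify that the resulting $\delta$ is coassociative and satisfies the Yetter–Drinfeld compatibility with $t$.
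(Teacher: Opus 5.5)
You follow the paper's route. The action is read off from $[x],[y],[t]$. The coaction comes from $\delta(v)=\sum_r h_r\otimes h^r\cdot v$ with $\{h^r\}=\{a^mb^nc^s\}$, and the dual basis is then rewritten in the monomials $x^py^qt^r$. However, the claim you use to explain why only the terms $x^py^qt$ survive is false.

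The functional $c$ is \emph{not} supported on the $t$-part of $H$. By its definition, $\langle c,x^py^qt^r\rangle=\xi^p(-1)^q$, so $\langle c,1\rangle=1$. This is forced anyway: applying $\varepsilon\otimes\mathrm{id}$ to the given $\Delta(c)$ yields $\varepsilon(c)=1$. On $G(H)$, the $a^mb^nc$ restrict to the four characters with $x\mapsto\pm\xi$.

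If you fill in your pairing table with $\langle a^mb^nc,x^py^q\rangle=0$, the table is singular. The $a^mb^n$ restrict to only four distinct characters of $G(H)$, namely those with $x\mapsto\pm1$, each occurring for $m$ and $m+2$. So the eight group-like columns span at most four dimensions, and the dual basis $(a^mb^nc^s)^*$ does not exist. With the correct values, the individual $(a^mb^nc^s)^*$ do have group-like components. These cancel only in the particular combinations that $W^1_{i,j,k}$ produces.

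The correct reason the coefficients lie in $\mathrm{span}\{x^py^qt\}$ is structural:
\begin{itemize}
\item $[c]$ interchanges the two distinct $[a]$-eigenlines, since $\xi^i\neq(-\xi)^i$ for odd $i$. Hence $W^1_{i,j,k}$ is a simple $2$-dimensional $H$-comodule.
\item The formula for $\Delta(t)$ shows that $\mathrm{span}\{x^py^qt\}$ is a subcoalgebra, and $H=\mathbb{K}G(H)\oplus\mathrm{span}\{x^py^qt\}$ is a direct sum of subcoalgebras.
\item Comodules over $\mathbb{K}G(H)$ are sums of one-dimensional ones, so the matrix coefficients of $W^1_{i,j,k}$ lie in $\mathrm{span}\{x^py^qt\}$.
\end{itemize}

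There are also some smaller points.
\begin{itemize}
\item The remark ``$\xi^i=(-1)^j\xi^i$ up to the convention'' is false for $j=1$ and is not needed. With the column convention used throughout the paper (compare $U_{i,j,k,l}$), the columns of $[t]$ give exactly $t.v_1=(-1)^j\xi^iv_2$ and $t.v_2=\xi^iv_1$.
\item The same convention for $[c]$, namely $c.v_1=(-1)^kv_2$ and $c.v_2=v_1$, is what places the factor $(-1)^k$ in the correct off-diagonal coefficient.
\item The $y$-exponent is read off from the $a$-eigenvalue, since $a(x^py^qt)=(-1)^q\xi$; the pairing $a^2(y)=1$ carries no information. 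Likewise $b(x^py^qt)=(-1)^{p+1}$ fixes the parity $p\equiv i+k \pmod 2$ of the $x$-exponent.
\end{itemize}

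Finally, you can avoid inverting a $16\times16$ matrix. Check three things about the stated $\delta$:
\begin{itemize}
\item it is coassociative;
\item it is counital;
\item $f\cdot v=\langle f,v_{(-1)}\rangle v_{(0)}$ reproduces $[a],[b],[c]$. For example, for $(i,j,k)=(1,0,0)$ one gets $\langle c,\tfrac{\xi}{2}(xyt-x^3yt)\rangle=1$ and $\langle c,\tfrac12(xt+x^3t)\rangle=0$.
\end{itemize}
Since $a,b,c$ generate $H^*$ and a coaction is determined by its induced action, this proves the formula. Yetter--Drinfeld compatibility is then automatic from the $D$-module structure.
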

	\begin{pro}
		If $W^{2}_{i,j,k}=\mathbb{K}\{v_1,v_2\}$ is a two-dimensional simple $D$-module with $(i,j,k)\in \Lambda^2$, then $W^{2}_{i,j,k}\in ^{H}_{H}\mathcal{YD}$ with its action given by
		\begin{gather*}
			x.v_1=\xi^iv_1, \qquad y.v_1=(-1)^jv_1, \qquad t.v_1=(-1)^j \xi^i v_2,\\
			x.v_2=(-\xi)^{i}v_2, \qquad y.v_2=(-1)^jv_2, \qquad t.v_2=\xi^i v_1,
		\end{gather*}
		and its coaction by
		\begin{equation*}
			\begin{split}
				\delta(v_1)&=\frac{1}{2}(x^{i+k}y^{[\frac{i}{2}]+1}t+x^{i+k+2}y^{[\frac{i}{2}]+1}t)\otimes v_1\\
				&\quad +\frac{1}{2}(-1)^{[\frac{i}{2}]+1}\xi^{i+k}(x^{i+k}y^{[\frac{i}{2}]}t-x^{i+k+2}y^{[\frac{i}{2}]}t)\otimes v_2,\\
				\delta(v_2)&=\frac{1}{2}(-1)^{[\frac{i+k}{2}]}\xi^{i+k}(x^{i+k}y^{[\frac{i}{2}]+1}t-x^{i+k+2}y^{[\frac{i}{2}]+1}t)\otimes v_1\\
				&\quad+\frac{1}{2}(x^{i+k}y^{[\frac{i}{2}]}t+x^{i+k+2}y^{[\frac{i}{2}]}t)\otimes v_2.\\
			\end{split}
		\end{equation*}	
		
	\end{pro}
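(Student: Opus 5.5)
The action of $H$ on $W^2_{i,j,k}$ is the restriction of the $D$-action in Lemma \ref{2simple} to the subalgebra generated by $x,y,t$. The matrices $[x],[y],[t]$ for $W^2_{i,j,k}$ are the same as for $W^1_{i,j,k}$, so the stated formulas for $x.v_r$, $y.v_r$, $t.v_r$ can be read off the columns of these matrices. Only the coaction needs real work, and I would follow the proof of Proposition \ref{2dim}. Since $H^*$ has basis $\{a^mb^nc^r\mid m\in\mathbb{I}_{0,3},\,n,r\in\mathbb{I}_{0,1}\}$ with dual basis $\{(a^mb^nc^r)^*\}\subset H$, the identification ${}^H_H\mathcal{YD}\cong{}_D\mathcal{M}$ gives
\begin{equation*}
\delta(v)=\sum_{m=0}^{3}\sum_{n=0}^{1}\sum_{r=0}^{1}(a^mb^nc^r)^*\otimes a^mb^nc^r\cdot v .
\end{equation*}

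\textbf{Step 1.} Compute $a^mb^nc^r\cdot v_s$ for $s=1,2$ from the matrices of $W^2_{i,j,k}$. Here $[a]=\mathrm{diag}((-\xi)^i,\xi^i)$, $[b]=(-1)^k\,\mathrm{id}$, and $[c]$ is antidiagonal with entries $1$ and $(-1)^k$. So $a^mb^n$ acts diagonally, by $(-\xi)^{im}(-1)^{kn}$ on $v_1$ and $\xi^{im}(-1)^{kn}$ on $v_2$, while $c$ swaps $v_1$ and $v_2$ up to the sign $(-1)^k$. This writes $\delta(v_1)$ and $\delta(v_2)$ as $X\otimes v_1+Y\otimes v_2$, where $X$ and $Y$ are explicit combinations of the dual basis elements: $X$ uses the $(a^mb^n)^*$ and $Y$ the $(a^mb^nc)^*$ for $v_1$, and the roles are reversed for $v_2$. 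Compared with $W^1$, the only change is that $\xi^i$ and $(-\xi)^i$ are swapped in the $a$-eigenvalues. This accounts for the swap of the exponents $y^{[\frac i2]}$ and $y^{[\frac i2]+1}$ in the statement, and for the changed signs in the off-diagonal coefficients.

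\textbf{Step 2 (main obstacle).} Each dual-basis combination must be re-expressed in the basis $\{x^p y^q t^{\epsilon}\}$ of $H$. This needs the pairing $\langle a^mb^nc^r,\,x^py^qt^\epsilon\rangle$, obtained by inverting the defining formulas of $a,b,c$ in the Remark following Definition \ref{H}. Because $H^*$ is not commutative, the character sums do not collapse to single group-likes. Instead they collapse to the combinations $\tfrac12(x^{s}y^{q}t\pm x^{s+2}y^{q}t)$, which come from the idempotents $\tfrac12(1\pm y)$ appearing in $\Delta(t)$. I would first derive, once and for all, closed forms for $\sum_{m,n}\alpha^m\beta^n(a^mb^n)^*$ and $\sum_{m,n}\alpha^m\beta^n(a^mb^nc)^*$ with $\alpha^4=\beta^2=1$. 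This reduces the verification to a finite parity check on $i\in\{1,3\}$ and $k\in\mathbb{Z}_2$, following the constraints defining $\Lambda^1$ or $\Lambda^2$. The distinct cases of $i\bmod 4$ are exactly what produce the floor exponents $[\frac i2]$ and $[\frac{i+k}{2}]$ and the signs $(-1)^{[\frac i2]+1}$.

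\textbf{Step 3.} As a consistency check, verify coassociativity and counitality: the coefficient matrix $(h_{rs})$ defined by $\delta(v_s)=\sum_r h_{rs}\otimes v_r$ must satisfy $\Delta(h_{rs})=\sum_u h_{ru}\otimes h_{us}$ and $\varepsilon(h_{rs})=\delta_{rs}$, using the coproduct of $t$ in (\ref{3.2}). Also check the Yetter--Drinfeld compatibility on the generator $t$. These checks would catch any sign error in Step 2, which is where I expect the bookkeeping with $\xi^{i+k}$ to be most error-prone.
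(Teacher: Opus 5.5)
Your proposal is correct and is essentially the paper's argument. The paper simply says the proof is analogous to the $V_{i,j,k,l}$ case: restrict the $D$-action to $x,y,t$, compute $\delta(v)=\sum (a^mb^nc^r)^*\otimes a^mb^nc^r\cdot v$, and rewrite the dual-basis elements in the basis $x^py^qt^{\epsilon}$ by a case check, with your Step 3 consistency checks being an extra safeguard the paper omits.
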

	\begin{pro}
		If $W^{3}_{i,j,k}=\mathbb{K}\{v_1,v_2\}$ is a two-dimensional simple $D$-module with $(i,j,k)\in \Lambda^2$, then $W^{3}_{i,j,k}\in ^{H}_{H}\mathcal{YD}$ with its action given by
		\begin{gather*}x.v_1=\xi^iv_1, \qquad y.v_1=(-1)^jv_1, \qquad t.v_1=(-1)^{i+j} \xi^i v_2,\\
			x.v_2=(-\xi)^{i}v_2, \qquad y.v_2=(-1)^jv_2, \qquad t.v_2=(-\xi)^i v_1,
		\end{gather*}
		and its coaction by
		\begin{equation*}
			\begin{split}
				\delta(v_1)&=\frac{1}{2}(x^{i+k}t+x^{i+k+2}t)\otimes v_1 +\frac{1}{2}\xi^{i+k}(x^{i+k}yt-x^{i+k+2}yt)\otimes v_2,\\
				\delta(v_2)&=\frac{1}{2}(-\xi)^{i+k}(x^{i+k}t-x^{i+k+2}t)\otimes v_1+\frac{1}{2}(x^{i+k}yt+x^{i+k+2}yt)\otimes v_2.
			\end{split}
		\end{equation*}	
	\end{pro}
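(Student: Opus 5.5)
The action needs no work: it is simply the restriction to $H^{cop}\subset D$ of the $D$-module structure on $W^3_{i,j,k}$ in Lemma \ref{2simple}. Reading off $[x]$, $[y]$, $[t]$ there gives
\[
t.v_1=(-1)^{i+j}\xi^i v_2, \qquad t.v_2=(-\xi)^i v_1 ,
\]
together with the diagonal actions of $x$ and $y$. So the work is in computing the coaction, and I will follow the proof of Proposition \ref{2dim}.

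For the coaction I use the identification ${}^H_H\mathcal{YD}\cong {}_{D(H^{cop})}\mathcal{M}$. Take the basis $\{a^m b^n c^r\}$ of $H^*$ with $m\in\mathbb{I}_{0,3}$ and $n,r\in\mathbb{I}_{0,1}$, and let $\{(a^mb^nc^r)^*\}\subset H$ be the dual basis. Then
\[
\delta(v)=\sum_{m,n,r}(a^mb^nc^r)^*\otimes a^mb^nc^r\cdot v .
\]
On $W^3_{i,j,k}$ the elements $a$ and $b$ act diagonally, by $\mathrm{diag}(\xi^i,(-\xi)^i)$ and $(-1)^k$ respectively, and $c$ acts antidiagonally with $c.v_1=(-1)^kv_2$ and $c.v_2=v_1$. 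Consequently the terms with $r=0$ contribute to the $v_1$-component of $\delta(v_1)$, and the terms with $r=1$ contribute to the $v_2$-component; for $\delta(v_2)$ the roles are reversed. Each component is a sum of eight terms $(a^mb^nc^r)^*$ weighted by explicit powers of $\xi$ and signs.

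The remaining step is to rewrite the elements $\sum \lambda_{m,n}(a^mb^nc^r)^*\in H$ in the basis $\{x^iy^jt^{\epsilon}\}$. I would do this by computing the pairings $\langle a^mb^nc^r, x^py^qt^\epsilon\rangle$. By the definitions of $a$, $b$, $c$ as dual elements to $(1\pm x+\cdots)(1\pm y)(1\pm t)$-type idempotent combinations, $a$ and $b$ are characters of $H$. Hence $a^mb^n$ evaluates on $x^py^qt^\epsilon$ as a product of roots of unity. Only $c$-containing products pair nontrivially with the $t$-part, which is why only the $t$-terms appear in the claimed formula. Pairing the claimed right-hand side against all sixteen basis elements $a^mb^nc^r$ and checking agreement with $a^mb^nc^r\cdot v_s$ is equivalent, and I expect it to be quicker than inverting the pairing matrix.

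The main obstacle is this bookkeeping of the pairing between $H^*$ and $H$ on the $t$-parts. In particular one must evaluate $c$ and $a^2c$ on $x^pt$ and $x^pyt$, using $\triangle(c)$ and the multiplication rule $c^2=b$. One must also check the constraint $(i,j,k)\in\Lambda^2$, i.e.\ $i=1$ and $j+k$ odd, which is what makes the exponent $x^{i+k}$ and the coefficients $\xi^{i+k}$ and $(-\xi)^{i+k}$ come out as stated. Since $\Lambda^2$ has only two elements, I would just treat the two cases $(1,0,1)$ and $(1,1,0)$ explicitly and verify both coaction formulas.

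As a consistency check, I would confirm the Yetter--Drinfeld compatibility $\delta(t.v_1)=t_{(1)}(v_1)_{(-1)}S(t_{(3)})\otimes t_{(2)}.(v_1)_{(0)}$ on one generator. This guards against sign errors in the $(-\xi)^{i+k}$ coefficient.
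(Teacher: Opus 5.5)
Your route is the paper's. The paper proves this proposition ``similarly'' to Proposition~\ref{2dim}: it reads the action off Lemma~\ref{2simple} and obtains the coaction from $\delta(v)=\sum_{m,n,r}(a^mb^nc^r)^*\otimes a^mb^nc^r\cdot v$. Pairing the claimed right-hand side with the sixteen elements $a^mb^nc^r$ is an equivalent way to do that same computation.

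However, one structural claim in your plan is false, and the check fails if you rely on it. It is not true that only $c$-containing products pair nontrivially with the $t$-part. The $a^mb^n$ are characters of $H$ and $t$ is invertible, so for instance $\langle a,t\rangle=\xi$ and $\langle b,t\rangle=-1$. Concretely, the $v_1$-coefficient of $\delta(v_1)$ is $\sum_{m,n}\xi^{im}(-1)^{kn}(a^mb^n)^*$, which involves only $c$-free dual elements. The target $\frac12(x^{i+k}+x^{i+k+2})t$ must therefore pair with each $a^mb^n$ to the nonzero eigenvalue $\xi^{im}(-1)^{kn}$, whereas your claim would make that pairing $0$.

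The correct dichotomy comes from the central idempotents $e_\pm=\frac12(1\pm x^2)$, not from $t$.
\begin{itemize}
\item Every character vanishes on $e_-H$, since $\chi(x)^2=1$.
\item The $a^mb^nc$ are matrix coefficients of the two-dimensional $H$-modules, on which $x^2=-1$, so they vanish on $e_+H$.
\item Hence $(a^mb^n)^*\in e_+H$ and $(a^mb^nc)^*\in e_-H$. This is why the diagonal entries carry $x^{i+k}+x^{i+k+2}$ and the off-diagonal ones carry $x^{i+k}-x^{i+k+2}$.
\end{itemize}

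The $t$ is forced for essentially the opposite reason to the one you give. We have $\langle a^mb^n,e_+x^py^qt^{\epsilon}\rangle=(-1)^{np+mq}(\xi^m(-1)^n)^{\epsilon}$. So the unique element of $e_+H$ on which each $a^mb^n$ takes the value $\xi^{m}(-1)^{kn}$ (recall $i=1$) is $e_+x^{i+k}t$. The odd powers of $\xi$ can only come from $\langle a,t\rangle=\xi$.

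Finally, when you pair with all sixteen elements, remember that $a,b,c$ multiply in $D$ as in $H^{*\mathrm{bop}}$. With $f\cdot v=\langle f,v_{(-1)}\rangle v_{(0)}$ one has $a\cdot(c\cdot v)=(ca)\cdot v$, where $ca=a^3c$ is computed in $H^*$. So a product formed in $H^*$ corresponds to composing the matrices of Lemma~\ref{2simple} in the reverse order. Mixing the two conventions produces spurious sign mismatches for odd $m$; for example, $[a][c]v_1=-[c][a]v_1$.
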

	\begin{pro}
		If $W^{4}_{i,j,k}=\mathbb{K}\{v_1,v_2\}$ is a two-dimensional simple $D$-module with $(i,j,k)\in\Lambda^2$, then $W^{4}_{i,j,k}\in ^{H}_{H}\mathcal{YD}$ with its action given by
		\begin{gather*}x.v_1=\xi^iv_1, \qquad y.v_1=(-1)^jv_1, \qquad t.v_1=(-1)^{i+j} \xi^i v_2,\\
			x.v_2=(-\xi)^{i}v_2, \qquad y.v_2=(-1)^jv_2, \qquad t.v_2=(-\xi)^i v_1,
		\end{gather*}
		and its coaction by
		\begin{flalign*}
			\delta(v_1)=&\frac{1}{2}(x^{i+k}yt+x^{i+k+2}yt)\otimes v_1
			+\frac{1}{2}(-\xi^{i+k})(x^{i+k}t-x^{i+k+2}t)\otimes v_2,\\
			\delta(v_2)=&\frac{1}{2}(-1)^k \xi^{i+k}(x^{i+k}yt-x^{i+k+2}yt)\otimes v_1
			+\frac{1}{2}(x^{i+k}t+x^{i+k+2}t)\otimes v_2.
		\end{flalign*}
	\end{pro}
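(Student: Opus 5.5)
Only the $H$-coaction on $W^{4}_{i,j,k}$ needs work. The $H$-action is the restriction of the $D$-action on $W^4_{i,j,k}$ in Lemma \ref{2simple} to the generators $x,y,t$, read column by column. This gives $x.v_1=\xi^i v_1$, $x.v_2=(-\xi)^i v_2$, $y.v_r=(-1)^j v_r$, $t.v_1=(-1)^{i+j}\xi^i v_2$ and $t.v_2=(-\xi)^i v_1$. For the coaction I will use the same device as in the proof of Proposition \ref{2dim}. Under the identification $_{D}\mathcal M\cong{}^H_H\mathcal{YD}$, the coaction is
\[
\delta(v)=\sum_{m=0}^{3}\sum_{n=0}^{1}\sum_{r=0}^{1}(a^mb^nc^r)^*\otimes a^mb^nc^r\cdot v ,
\]
where $\{(a^mb^nc^r)^*\}\subset H$ is the basis dual to the basis $\{a^mb^nc^r\}$ of $H^*$.

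\textbf{Step 1: expand the sum.} On $W^4_{i,j,k}$ the element $a$ acts diagonally by $((-\xi)^i,\xi^i)$, $b$ acts by the scalar $(-1)^k$, and $c$ sends $v_1\mapsto (-1)^k v_2$ and $v_2\mapsto v_1$. Products $a^mb^n$ are therefore diagonal and keep $v_1$ on $v_1$, while $a^mb^nc$ swaps $v_1$ and $v_2$. So $\delta(v_1)$ is a combination of $\{(a^mb^n)^*\}$ tensored with $v_1$, plus a combination of $\{(a^mb^nc)^*\}$ tensored with $v_2$, with coefficients given by explicit powers of $\xi$ and signs. $\delta(v_2)$ has the same shape with the roles of $v_1$ and $v_2$ exchanged.

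\textbf{Step 2: rewrite the dual basis in the standard basis of $H$.} I will express each $(a^mb^nc^r)^*$ in terms of $\{x^s,x^sy,x^st,x^syt\}$. Since $a,b,c$ were defined as duals of specific idempotent-type combinations of $x,y,t$, this amounts to inverting the $16\times16$ pairing matrix $\langle a^mb^nc^r, x^sy^pt^q\rangle$. I will do this by computing the pairings through $\Delta$ of $H$. The sums over $m$ act as characters of $\langle a\rangle$, and the diagonal eigenvalues are $(\pm\xi)^i$, so the resulting combinations collapse to the projector-like forms $\tfrac12(x^{i+k}yt\pm x^{i+k+2}yt)$ and $\tfrac12(x^{i+k}t\pm x^{i+k+2}t)$. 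Only elements involving $t$ can appear, because for the two-dimensional $D$-modules of the $W$-type the pairing with group-likes of $H$ vanishes on the relevant idempotents. In the same way the $c$-part produces the off-diagonal coefficients $-\xi^{i+k}$ and $(-1)^k\xi^{i+k}$.

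\textbf{Step 3: case check and verification.} $\Lambda^2$ consists only of $(1,0,1)$ and $(1,1,0)$, so only two triples remain, and I will evaluate both explicitly. As a check I will verify coassociativity and counitality of $\delta$ using $\Delta(t)$ from \eqref{3.2}, and the Yetter--Drinfeld compatibility on the generators $x$ and $t$. The comparison with $W^3_{i,j,k}$ also serves as a check: $W^4$ differs from $W^3$ only by swapping the eigenvalues of $a$, and in the coaction this should amount to exchanging $t$ and $yt$ and adjusting the signs. I expect Step 2, the inversion of the pairing and tracking the powers of $\xi$ and the signs, to be the main obstacle. I will first build the table for $\langle a^m c^r,\cdot\rangle$ and then obtain the $b$-factor as a global sign twist by $y$.
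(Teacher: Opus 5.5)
This is essentially the paper's argument. The paper proves the statement ``similarly'' to Proposition \ref{2dim}: it restricts the $D$-action to $x,y,t$ and reads the coaction off $\delta(v)=\sum (a^mb^nc^r)^*\otimes a^mb^nc^r\cdot v$ for the two triples $(1,0,1),(1,1,0)$ of $\Lambda^2$. Your coassociativity and Yetter--Drinfeld checks are extra safeguards the paper omits, and they do go through for the stated formulas.

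The one point to keep straight in Step 2 concerns how you compute the functionals $a^mb^nc^r$. Since you let $a^mb^nc^r$ act by $[a]^m[b]^n[c]^r$, the corresponding functionals must be multiplied in the dual factor $H^{*\mathrm{bop}}$ of $D$, i.e.\ with $\langle fg,h\rangle=\langle f,h_{(2)}\rangle\langle g,h_{(1)}\rangle$. This matters because on $W^4_{1,j,k}$ one has $[c][a]=-[a][c]$, so the other ordering would flip the signs of the terms with $r=1$ and $m$ odd.
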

	
	\begin{pro}
		If $U_{i,j,k,l}=\mathbb{K}\{v_1,v_2\}$ is a two-dimensional simple $D$-module with $(i,j,k,l)\in \Gamma$, then $U_{i,j,k,l}\in ^{H}_{H}\mathcal{YD}$ with its action given by
		\begin{gather*}x.v_1=\xi^iv_1, \qquad y.v_1=(-1)^jv_1, \qquad t.v_1=(-1)^{i+j} v_2,\\
			x.v_2=(-\xi)^{i}v_2, \qquad y.v_2=(-1)^jv_2, \qquad t.v_2=v_1,
		\end{gather*}
		and its coaction by
		$$\delta(v_1)=x^{l+(-1)^j2k}y^k\otimes v_1, \qquad
		\delta(v_2)=x^{2|j-k|-l+4}y^{k+1}\otimes v_2.$$
	\end{pro}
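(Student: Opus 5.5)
The plan follows the proofs of Propositions \ref{1dim} and \ref{2dim}. The $H$-action on $U_{i,j,k,l}$ is the restriction of the $D$-action in Lemma \ref{2simple} to the subalgebra generated by $x,y,t$, so the formulas for $x.v_r$, $y.v_r$, $t.v_r$ are read off from the matrices $[x],[y],[t]$. Only the coaction needs to be determined.

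For the coaction I use the identification $^H_H\mathcal{YD}\cong {_D\mathcal{M}}$. The coaction is recovered from the $H^*$-action by
\[
\delta(v)=\sum_{m=0}^{3}\sum_{n=0}^{1}\sum_{r=0}^{1}(a^mb^nc^r)^*\otimes a^mb^nc^r\cdot v ,
\]
where $\{(a^mb^nc^r)^*\}$ is the basis of $H$ dual to the basis $\{a^mb^nc^r\}$ of $H^*$. The key simplification is that on $U_{i,j,k,l}$ the matrices $[a],[b],[c]$ are all diagonal. Hence every $a^mb^nc^r$ acts on $v_1$ and $v_2$ by scalars:
\begin{itemize}
\item on $v_1$ by $(-1)^{km+ln}\xi^{lr}$;
\item on $v_2$ by $(-1)^{(i+k)m+ln}\big((-1)^{i+j+l}\xi^l\big)^r$.
\end{itemize}
So $\delta(v_1)=g_1\otimes v_1$ and $\delta(v_2)=g_2\otimes v_2$, where $g_s\in H$ is the element evaluating as the corresponding character on $H^*$. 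Since $g_s$ is a character of $H^*$, it is group-like in $H$, so $g_s\in G(H)=\langle x\rangle\times\langle y\rangle$.

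It therefore suffices to identify each $g_s$ by its values on the generators $a,b,c$ of $H^*$. For this I need the pairings $\langle a,x^py^q\rangle$, $\langle b,x^py^q\rangle$ and $\langle c,x^py^q\rangle$ for the eight group-likes. These come from the definitions of $a,b,c$ in the Remark after Definition \ref{H}, by expanding the products $(1+x+x^2+x^3)(1-y)(1+\xi t)$ and so on in the basis $\{x^iy^jt^\epsilon\}$ and reading off the coefficients at $x^py^q$. The result is a table of $8$ triples $(\langle a,g\rangle,\langle b,g\rangle,\langle c,g\rangle)$. Matching the triple $\big((-1)^k,(-1)^l,\xi^l\big)$ against this table gives $g_1$. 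Matching $\big((-1)^{i+k},(-1)^l,(-1)^{i+j+l}\xi^l\big)$ gives $g_2$.

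Finally, I rewrite the answers uniformly in $(i,j,k,l)\in\Gamma$ (recall $i=1$ there) as $x^{l+(-1)^j2k}y^k$ and $x^{2|j-k|-l+4}y^{k+1}$. This is done by checking the $16$ tuples, or equivalently by splitting on the parities of $j,k$ and on $l\in\mathbb{Z}_4$.

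As a consistency check I verify the Yetter-Drinfeld compatibility on the generator $t$. Since $t.v_1\in\mathbb{K}v_2$, it forces $g_2=tg_1S(t)$ up to the projection pieces. This should confirm that the $y$-exponent of $g_2$ is that of $g_1$ shifted by $1$.

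The main obstacle is the pairing table, in particular $\langle c,\cdot\rangle$. Here the factor $\xi$ and the $(1\pm y)$ idempotents make sign errors likely, and the result must be consistent with $c^2=b$ and with the nonstandard coproduct of $c$. I would compute $\langle c,g\rangle$ first, then cross-check that $\langle c,g\rangle^2=\langle b,g\rangle$ for every group-like $g$ before matching the characters.
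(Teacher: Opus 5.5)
Your proposal is correct and is essentially the paper's argument. The paper proves this statement ``similarly'' to Proposition~\ref{2dim}, via $\delta(v)=\sum h_i\otimes h^i\cdot v$, and your observation that $[a],[b],[c]$ are diagonal on $U_{i,j,k,l}$ reduces that computation to identifying two group-likes from their values on $a,b,c$, as in Proposition~\ref{1dim}. Your pairing table, $\langle a,x^py^q\rangle=(-1)^q$, $\langle b,x^py^q\rangle=(-1)^p$, $\langle c,x^py^q\rangle=(-1)^q\xi^p$, gives $g_1=x^{l+2k}y^k$ and $g_2=x^{2(j+k)-l}y^{k+1}$, which agree with the stated exponents modulo $4$.
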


	\section{Nichols Algebras in $^{H}_{H}\mathcal{YD}$}
	\begin{lem}\label{nichols}
		Let $(i,j,k,l)\in \mathbb{I}_{0,1}\times \mathbb{I}_{0,3}\times \mathbb{I}_{0,1}\times \mathbb{I}_{0,1}$. The Nichols algebras $\mathcal{B}(\mathbb{K}_{\chi_{i,j,k,l}})$ associated to $\mathbb{K}_{\chi_{i,j,k,l}}=\mathbb{K}v$ are
		$\mathcal{B}(\mathbb{K}_{\chi_{i,j,k,l}})=\left\{\begin{array}{ll}
			\mathbb{K}[v], ~~~~~~~~~~~~~~~~~~~~~~~~otherwise,\\
			\mathbb{K}[v]/(v^2)=\bigwedge \mathbb{K}_{\chi_{i,j,k,l}}, ~~~if~(i+k)j=1,3.
		\end{array}
		\right.$
	\end{lem}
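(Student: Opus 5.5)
The plan is to reduce everything to the standard description of Nichols algebras of one-dimensional braided vector spaces. By Proposition~\ref{1dim}, the braiding on $\mathbb{K}_{\chi_{i,j,k,l}}=\mathbb{K}v$ is $c(v\otimes v)=q\,v\otimes v$ with $q=(-1)^{ij+jk}=(-1)^{(i+k)j}$. The Nichols algebra of $V=\mathbb{K}v$ depends only on the braided vector space $(V,c)$ (see the Remark following Definition 2.1), so it suffices to compute $\mathcal{B}$ of a one-dimensional braided space with scalar $q$.

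For such a space, $T(V)=\mathbb{K}[v]$ with braided coproduct $\Delta(v^n)=\sum_{m=0}^{n}\binom{n}{m}_q v^m\otimes v^{n-m}$. Hence $v^n$ lies in the maximal homogeneous Hopf ideal $I$ exactly when some quantum binomial vanishes, equivalently when $(n)_q!=0$. Since $q=\pm1$, there are two cases. If $q=1$, all $(n)_q=n\neq 0$ in characteristic zero, so $I=0$ and $\mathcal{B}(V)=\mathbb{K}[v]$. If $q=-1$, then $(2)_q=1+q=0$, so $v^2$ is primitive of degree $2$. It must therefore lie in $I$, and $\mathcal{B}(V)=\mathbb{K}[v]/(v^2)=\bigwedge V$, which has dimension $2$ because $v\neq 0$ in degree one.

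It remains to translate the condition $q=-1$ into the indices. We have $q=-1$ iff $(i+k)j$ is odd. With $i,k\in\mathbb{I}_{0,1}$ and $j\in\mathbb{I}_{0,3}$, this forces $i+k=1$ and $j\in\{1,3\}$, so $(i+k)j\in\{1,3\}$. Conversely, if $(i+k)j\in\{1,3\}$, the product is odd, hence $q=-1$. Every other case, including $i+k\in\{0,2\}$ or $j$ even, gives $q=1$.

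The only step requiring care is the braiding scalar itself. We must confirm that $x^{j+2k+2l}y^k$ acts on $v$ by $(-1)^{i(j+2k+2l)}(-1)^{jk}=(-1)^{ij+jk}$, which is already recorded in Proposition~\ref{1dim}. Once that is accepted, the rest is the routine rank-one computation above.
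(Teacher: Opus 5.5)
Your proposal is correct and is the argument the paper leaves implicit: read off $q=(-1)^{ij+jk}=(-1)^{(i+k)j}$ from Proposition~\ref{1dim}, then apply the standard rank-one description, namely $\mathbb{K}[v]$ for $q=1$ in characteristic zero and $\mathbb{K}[v]/(v^2)$ for $q=-1$. One phrase is wrong and should be fixed: ``$v^n\in I$ exactly when some quantum binomial vanishes'' is not equivalent to $(n)_q!=0$ (for $q=-1$, $n=3$ one has $\binom{3}{1}_{-1}=\binom{3}{2}_{-1}=1$ although $v^3\in I$), but your case analysis only uses the correct criterion $(n)_q!\neq 0$ and the primitivity of $v^2$, so the conclusion stands.
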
\label{nichols}
	\begin{rem}\label{fnic1}
	 $\mathcal{B}(\mathbb{K}_{\chi_{i,j,k,l}})$ is finite dimensional if and only if  $\mathbb{K}_{\chi_{i,j,k,l}}$ is isomorphic to one of the following Yetter-Drinfeld modules
		\begin{equation*}
			\begin{split}
				V_{1}:&=\mathbb{K}_{\chi_{0,1,1,0}},
				\qquad	V_{2}:=\mathbb{K}_{\chi_{0,1,1,1}},
				\qquad
				V_{3}:=\mathbb{K}_{\chi_{0,3,1,0}},
				\qquad
				V_{4}:=\mathbb{K}_{\chi_{0,3,1,1}},\\
				V_{5}:&=\mathbb{K}_{\chi_{1,1,0,0}},	\qquad	V_{6}:=\mathbb{K}_{\chi_{1,1,0,1}},	\qquad	V_{7}:=\mathbb{K}_{\chi_{1,3,0,0}},	\qquad
				V_{8}:=\mathbb{K}_{\chi_{1,3,0,1}}.
			\end{split}
		\end{equation*}
	\end{rem}		
	\begin{rem}
		For the convenience of our statements, we set
		\begin{equation*}
			\begin{split}
				M_1 &:=V_{0,1,2,0},\qquad M_2:=V_{0,2,1,0},\qquad M_3=V_{0,2,1,1},\qquad M_4:=V_{0,2,3,0},\\
				M_5 &:=V_{0,2,3,1},\qquad M_6:=V_{1,0,0,1},\qquad M_7=V_{1,0,2,1}, \qquad M_8:=V_{1,1,2,0},\\
				M_9 &:=U_{1,0,0,2}, \qquad M_{10}:=U_{1,0,1,0},\quad M_{11}=U_{1,1,0,2}, \quad M_{12}=U_{1,1,1,2}.\\
			\end{split}
		\end{equation*}
	\end{rem}
	\begin{lem}\label{vfnic}
		Let $V_{i,j,k,l}\in ^{H}_{H}\mathcal{YD}$ with $(i,j,k,l)\in \Omega.$ Then
		
		$(1)$ $dim\,\mathcal{B}(V_{i,j,k,l})=\infty,$ for $V_{i,j,k,l}\notin\{M_1,...,M_{8}\}.$
		
		$(2)$ For any $V\in\{M_1,...,M_{8}\}$, $dim \,\mathcal{B}(V)=4$ and
		the relations of $\mathcal{B}(V)$ are given by
		$$
		v_1^2=0,\qquad v_1v_2+v_2v_1=0,\qquad v_2^2=0.
		$$
	\end{lem}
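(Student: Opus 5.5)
The plan is to compute the braiding of each $V_{i,j,k,l}$ with $(i,j,k,l)\in\Omega$ explicitly and then recognize it as a known two-dimensional braided vector space. By Proposition \ref{2dim} the action of $x,y,t$ on the basis $\{v_1,v_2\}$ is diagonal. The coaction has the form
\[
\delta(v_1)=g_{11}\otimes v_1+g_{12}\otimes v_2,\qquad \delta(v_2)=g_{21}\otimes v_1+g_{22}\otimes v_2 .
\]
Here the $g_{pq}$ are the explicit combinations $\tfrac12(h\pm x^2h)$ with $h$ among $x^{k+l}y^{\epsilon}t^{k \bmod 2}$. Using formula (\ref{braiding}), $c(v_p\otimes v_q)=\sum_r g_{pr}\cdot v_q\otimes v_r$, so each term reduces to scalars: $\tfrac12(h\pm x^2h)$ acts on $v_q$ by $\tfrac12(\chi_q(h)\pm\chi_q(x^2)\chi_q(h))$. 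Since $\chi_q(x^2)=1$ always (as $x$ acts by $\pm1$), the ``minus'' terms vanish. The braiding is therefore diagonal:
\[
c(v_p\otimes v_q)=q_{pq}\,v_q\otimes v_p, \qquad q_{pq}=\chi_q(h_p).
\]
Here $h_p$ is the group-like-times-$t$ part attached to $v_p$. This is the key simplification: each $V_{i,j,k,l}$ is of diagonal type, with matrix $(q_{pq})$ computed directly from the characters in Lemma \ref{2simple}.

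I will then tabulate $q_{11},q_{22}$ and $\tilde q_{12}=q_{12}q_{21}$ for all $24$ tuples in $\Omega$, splitting into $\Omega^1$ ($k$ odd, so $t$ appears in $h$ and contributes $\xi^j$ or $(-1)^{j+k+l}\xi^j$) and $\Omega^2$ ($k$ even, purely group-like $h$). Entries are fourth roots of unity. For part (1), whenever $q_{11}=1$ or $q_{22}=1$, the vector $v_p$ satisfies $c(v_p\otimes v_p)=v_p\otimes v_p$, and the Remark after the Nichols algebra definition gives $\dim\mathcal B=\infty$. In the remaining cases I would invoke Heckenberger's classification of rank two diagonal braidings with finite-dimensional Nichols algebras, or more elementarily the rank two list with $q_{ii}\in\{\pm1,\pm\xi\}$. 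I expect every non-$M$ case to have either a trivial diagonal entry or a generalized Dynkin diagram not in the list, for example $q_{11}=q_{22}=-1$ with $\tilde q_{12}=\pm\xi$, and this would be checked case by case.

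For part (2), I expect that for the eight modules $M_1,\dots,M_8$ the computation gives $q_{11}=q_{22}=-1$ and $\tilde q_{12}=1$. Then the braided vector space is of Cartan type $A_1\times A_1$, so $\mathcal B(V)\cong\mathcal B(\mathbb K v_1)\otimes\mathcal B(\mathbb K v_2)$ by Lemma \ref{nicdim}, each factor being an exterior algebra. This yields dimension $4$ with relations $v_1^2=v_2^2=0$ and $v_1v_2-q_{12}v_2v_1=0$. To confirm that the relation is $v_1v_2+v_2v_1=0$, I need $q_{12}=-1$. This follows by checking that the derivations $\partial_1,\partial_2$ kill $v_1v_2+v_2v_1$, which in turn requires $q_{12}=-1$ (equivalently $q_{21}=-1$). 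I expect the main obstacle to be this bookkeeping: correctly simplifying the coaction formulas of Proposition \ref{2dim} (with the floor and parity exponents) into the scalars $q_{pq}$ for each tuple, and making sure no tuple outside the $M$ list accidentally has a finite-type diagram.
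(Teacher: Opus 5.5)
Your plan is correct and follows the paper's argument: show the braiding is diagonal, dispose of the non-$M$ cases by a vertex labelled $1$ or by Heckenberger's list, and recognize $M_1,\dots,M_8$ as Cartan type $A_1\times A_1$ with $q_{12}=q_{21}=-1$, so that $c$ is minus the flip. You are also more explicit than the paper about why the off-diagonal coaction terms $\tfrac12(h-x^2h)$ act by zero and why the relation is $v_1v_2+v_2v_1=0$.

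One slip: your sample ``infinite'' diagram $q_{11}=q_{22}=-1$, $\tilde q_{12}=\pm\xi$ is in fact \emph{in} Heckenberger's rank-two list (super type $A$, dimension $16$), so it could not witness infinite dimension. It never occurs here, because $\tilde q_{12}=(-1)^{kj}\in\{\pm1\}$ for every $V_{i,j,k,l}$. The infinite cases that actually arise are $q_{11}=q_{22}=1$, and $q_{11}=q_{22}=\pm\xi$ with $\tilde q_{12}=-1$ (affine Cartan matrix, $a_{12}=a_{21}=-2$), exactly as in the paper.
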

	\begin{proof}
		(1) If $V_{i,j,k,l} \notin \{ M_1,...,M_8\}$, then  the generalized Dynkin diagram associated to the braiding is one of the following
		\begin{center}	
			\begin{tikzpicture}
				\draw (0,0) circle (2pt) node[above] {1};
				\draw (2,0) circle (2pt) node[above] {1};
				\node at (3,0) {or};
				\draw (4,0) circle (2pt) node[above] {$\xi$};
				\draw (4,0) -- (6,0) node[midway, above] {$-1$};
				\draw (6,0) circle (2pt) node[above] {$\xi$};
				\node at (7,0) {or};
				\draw (8,0) circle (2pt) node[above] {$-\xi$};
				\draw (8,0) -- (10,0) node[midway, above] {$-1$};
				\draw (10,0) circle (2pt) node[above] {$-\xi$};
			\end{tikzpicture}.
		\end{center}
		
		In fact, the braiding of $V_{i,j,k,l}$ associated to  the first generalized Dynkin diagram has an eigenvector $v\otimes v$ with eigenvalue $1$, thus the  $\mathcal{B}(V_{i,j,k,l})$
		is infinite dimensional corresponding to it by Remark $\ref{bra}$. The remaining two cases of $(1)$  follow from \cite{H09}.
		
		(2)  For any $V \in \{M_1,...,M_8\}$, their braidings are of Cartan type $A_1\times A_1$. Clearly, we have the required relations of $\mathcal {B}(V)$.
	\end{proof}
	\begin{lem}\label{wfnic}
		Let $W^1_{i,j,k}$, $W^2_{i,j,k}\in ^{H}_{H}\mathcal{YD}$ with $(i,j,k)\in \Lambda^1 $, $W^3_{i,j,k},  W^4_{i,j,k}\in ^{H}_{H}\mathcal{YD}$ with $(i,j,k)\in \Lambda^2$. Then $\dim \mathcal{B}(W^p_{i,j,k})=\infty$, for $p\in\mathbb{I}_{1,4}$, $(i,j,k)\in\Lambda^1 \cup \Lambda^2 $.
	\end{lem}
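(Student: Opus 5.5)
The plan is to reduce each $W^p_{i,j,k}$ to a braided vector space of diagonal type and then use Remark~\ref{bra} or Heckenberger's classification \cite{H09}, as in the proof of Lemma~\ref{vfnic}.

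\textbf{Step 1: compute the braiding on the given basis.} Fix $p\in\mathbb{I}_{1,4}$ and $(i,j,k)$ in $\Lambda^1$ (for $p=1,2$) or in $\Lambda^2$ (for $p=3,4$). Using formula (\ref{braiding}), I compute $c(v_r\otimes v_s)$, $r,s\in\{1,2\}$, from the coactions given in the propositions for $W^p_{i,j,k}$. Every element appearing in $\delta(v_r)$ has the form $x^m y^n t$ or a combination $x^m y^n t \pm x^{m+2}y^n t$. By the actions of $x,y,t$, each such element sends $v_1$ to a multiple of $v_2$ and $v_2$ to a multiple of $v_1$. Hence $c(v_r\otimes v_s)$ is a linear combination of $v_{s'}\otimes v_{r'}$ with $s'\neq s$. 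In the basis $\{v_1,v_2\}$ the braiding is therefore not diagonal, although it is visibly "monomial-like".

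\textbf{Step 2: change to a diagonal basis.} Since $x$ acts diagonally on $v_1,v_2$ with eigenvalues $\xi^i$ and $(-\xi)^i$, which are distinct because $i$ is odd, the natural move is to pass to a basis adapted to the coaction. I will set
\[
w_1=v_1+\lambda v_2,\qquad w_2=v_1-\lambda v_2,
\]
and choose $\lambda$ (a suitable power of $\xi$ times $\pm1$, read off from the coefficients $\tfrac12(-1)^{[\frac i2]}\xi^{i+k}$ in $\delta$) so that each $w_r$ is an eigenvector of the relevant left $H$-actions. I will check in each case that
\[
c(w_r\otimes w_s)=q_{rs}\,w_s\otimes w_r .
\]
If this fails for some family, I will instead use a basis in which the coaction of each $w_r$ is concentrated on a single simple subcoalgebra component $\tfrac12(x^m y^n t\pm x^{m+2}y^nt)$. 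This is the trick used for the Kac--Paljutkin algebra in \cite{S16}. The Yetter--Drinfeld compatibility then forces $c(w_r\otimes w_s)\in\mathbb{K}\,w_{s'}\otimes w_r$.

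\textbf{Step 3: read off the braiding matrix and conclude.} With $(q_{rs})$ in hand, I compute $q_{11}$, $q_{22}$ and $q_{12}q_{21}$. These are products of fourth roots of unity coming from $\xi^i$, $(-1)^j$ and $(-1)^k$. I expect two outcomes:
\begin{itemize}
\item In many cases some $q_{rr}=1$, and then $\dim\mathcal{B}(W^p_{i,j,k})=\infty$ by Remark~\ref{bra}.
\item In the remaining cases, the generalized Dynkin diagram has vertices labelled $\pm\xi$ (or $-1$) and edge label $q_{12}q_{21}\in\{-1,\pm\xi\}$. Such a diagram does not occur in the rank-two list of \cite{H09}; for instance, vertices $\xi,\xi$ with edge $-1$ is excluded exactly as in Lemma~\ref{vfnic}. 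Hence $\dim\mathcal{B}=\infty$ here too.
\end{itemize}
The parity conditions defining $\Lambda^1$ and $\Lambda^2$ (namely $j+k$ even or odd) will be used to pin down the signs in each case.

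\textbf{Main obstacle.} I expect the hardest part to be Step 2 together with the bookkeeping of signs: the coactions depend on $[\frac i2]$, $k$ and $j$, so the diagonalizing $\lambda$ and the resulting $q_{rs}$ vary from case to case. I will organize the case analysis by $i\in\{1,3\}$ and by the parity of $j+k$. I will also exploit the symmetry between $W^1$ and $W^2$ (respectively $W^3$ and $W^4$), which differ only by swapping the eigenvalues of $a$. This suggests computing $W^1$ and $W^3$ in full and then transporting the results, for example via an automorphism $\tau$ of $H$ from Table~1 or by relabelling $v_1\leftrightarrow v_2$.
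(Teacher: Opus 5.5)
Your overall route matches the paper's: show that every $W^p_{i,j,k}$ has braiding of diagonal type, then invoke \cite{H09}. The difference is that you replace the citation of \cite[Section 3.7]{AGi2018} with an explicit computation.

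\textbf{The gap is in Step 3.} You claim that no diagram with vertices in $\{\pm\xi,-1\}$ and edge label $q_{12}q_{21}\in\{-1,\pm\xi\}$ occurs in Heckenberger's list. This is false. Two counterexamples lie inside your allowed set:
\begin{itemize}
\item $q_{11}=q_{22}=\xi$, $q_{12}q_{21}=-\xi=\xi^{-1}$ is Cartan type $A_2$ at a primitive fourth root of unity.
\item $q_{11}=q_{22}=-1$, $q_{12}q_{21}=\xi$ is also in the rank-two list.
\end{itemize}
Both have finite-dimensional Nichols algebras. So the argument as written does not conclude; you must determine the diagram exactly. The branch ``some $q_{rr}=1$'' never actually occurs.

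Step 2 also needs two corrections.
\begin{itemize}
\item The vectors $v_1\pm\lambda v_2$ cannot be $x$-eigenvectors. For $i$ odd, $x(v_1+\lambda v_2)=\xi^i(v_1-\lambda v_2)$, so $x$ interchanges the two lines. The relevant operator is $t$ or $xt$.
\item Drop the fallback. $W^p_{i,j,k}$ is a simple $2$-dimensional comodule over a $4$-dimensional matrix coalgebra, so no basis vector has coaction of the form $h\otimes w$. Moreover, $c(w_r\otimes w_s)\in\mathbb{K}\,w_{s'}\otimes w_r$ with $s'\neq s$ would not be diagonal type anyway.
\end{itemize}

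\textbf{A uniform way to close it.}
\begin{itemize}
\item Put $e_\pm=\frac12(1\pm y)$, which are central. The elements $E_{11}=e_+t$, $E_{12}=e_-t$, $E_{21}=x^2e_-t$, $E_{22}=x^2e_+t$ satisfy $\Delta(E_{rs})=\sum_u E_{ru}\otimes E_{us}$. They form a comatrix basis of $C_0=\mathrm{span}\{t,x^2t,yt,x^2yt\}$, and the $xE_{rs}$ form one of $C_1=xC_0$.
\item The coaction of $W=W^p_{i,j,k}$ lies in $C_0$ or $C_1$. Hence there is a basis $w_1,w_2$ with $\delta(w_r)=\sum_s E_{rs}\otimes w_s$ (resp.\ $xE_{rs}$).
\item On $W$, $x^2$ acts by $-1$ (since $i$ is odd) and $y$ acts by $(-1)^j$. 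This gives $c=\mathrm{fl}\circ(P\otimes T)$, where $\mathrm{fl}$ is the flip and $T=t$ (resp.\ $xt$). Here $P=\mathrm{diag}(1,-1)$ if $j$ is even, and $P\colon w_1\mapsto w_2,\ w_2\mapsto -w_1$ if $j$ is odd.
\item For a map of this form, the braid equation is equivalent to $PT=TP$.
\item Both $P$ and $T$ have distinct eigenvalues: $T$ is traceless with $T^2=x^2y=-(-1)^j$ on $W$. So they have a common eigenbasis $z_1,z_2$. These are exactly the $T$-eigenvectors $v_1\pm\lambda v_2$ you were after.
\item Writing $Pz_a=\rho_a z_a$ and $Tz_a=\theta_a z_a$, you get $c(z_a\otimes z_b)=\rho_a\theta_b\, z_b\otimes z_a$.
\item Checking both parities of $j$ gives, in every case, $q_{11}=q_{22}\in\{\pm\xi\}$ and $q_{12}q_{21}=-1$.
\end{itemize}
These are precisely the diagrams $\pm\xi$ --- $(-1)$ --- $\pm\xi$ already excluded in Lemma~\ref{vfnic}; their Cartan matrix is of affine type $A_1^{(1)}$. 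Hence $\dim\mathcal{B}(W^p_{i,j,k})=\infty$.
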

	\begin{proof}
		By \cite [Section 3.7]{AGi2018}, we have all  the braidings  of $W^p_{i,j,k}$ for all $p\in \mathbb{I}_{1,4}$ and $(i, j, k,) \in \Lambda^1\cup\Lambda^2$ are of diagonal type. Therefore, the assertion is an immediate consequence of \cite{H09}.
	\end{proof}
	\begin{lem}\label{ufnic}
		Let $U_{i,j,k,l}\in ^{H}_{H}\mathcal{YD}$ with $(i,j,k,l)\in \Gamma.$ Then
		
		$(1)$ $dim\,\mathcal{B}(U_{i,j,k,l})=\infty,$ for $U_{i,j,k,l}\notin\{M_9,...,M_{12}\}.$
		
		$(2)$ For any $U\in\{M_9,...,M_{12}\}$, $dim\,\mathcal{B}(U)=4$ and
		the relations of $\mathcal{B}(U)$ are given by
		$$
		v_1^2=0,\qquad v_1v_2+v_2v_1=0,\qquad v_2^2=0.
		$$
	\end{lem}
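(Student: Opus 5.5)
The plan is to mirror the proof of Lemma \ref{vfnic}. The key observation is that for every $(i,j,k,l)\in\Gamma$ the module $U_{i,j,k,l}$ already has diagonal braiding in the basis $\{v_1,v_2\}$. By the last proposition of Section 4, both $v_1$ and $v_2$ are homogeneous for the coaction, with group-like degrees
\[
g_1=x^{l+(-1)^j2k}y^k,\qquad g_2=x^{2|j-k|-l+4}y^{k+1}\in G(H).
\]
Hence $c(v_r\otimes v_s)=g_r\cdot v_s\otimes v_r$. The elements $x$ and $y$ act diagonally on $v_1,v_2$, since $x.v_1=\xi v_1$, $x.v_2=-\xi v_2$ (with $i=1$) and $y.v_r=(-1)^jv_r$. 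So every $g_r\cdot v_s$ is a scalar multiple $q_{rs}v_s$, and the braiding is of diagonal type with braiding matrix $(q_{rs})$, where
\[
q_{rs}=\chi_s(g_r),\qquad \chi_1(x)=\xi,\ \chi_2(x)=-\xi,\ \chi_1(y)=\chi_2(y)=(-1)^j .
\]

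The first step is to tabulate $q_{11},q_{22}$ and $q_{12}q_{21}$ for all $16$ tuples $(1,j,k,l)$ with $j,k\in\mathbb{Z}_2$ and $l\in\mathbb{Z}_4$. This is routine: each $g_r$ equals $x^{m}y^{n}$ for explicit exponents, so $q_{rs}=(\pm\xi)^m(-1)^{jn}$. From this table I read off the generalized Dynkin diagram in each case. These diagrams are rank-two with vertices labelled by fourth roots of unity.

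For part (1), I split the tuples not in $\{M_9,\dots,M_{12}\}$ into two kinds. In the first kind some $q_{rr}=1$, so $v_r\otimes v_r$ is fixed by $c$, and Remark \ref{bra} gives $\dim\mathcal{B}=\infty$. In the second kind the diagram has a vertex labelled $\pm\xi$, and a nontrivial edge or a vertex pattern such as $\xi$ joined by $-1$. These diagrams are absent from Heckenberger's list of rank-two diagonal braidings with finite arithmetic root system \cite{H09}, so again $\dim\mathcal{B}=\infty$. I expect exactly the same two diagram shapes as in Lemma \ref{vfnic}, although a vertex labelled $-1$ joined to a vertex labelled $\pm\xi$ may also occur, and that case would need a separate check against \cite{H09}.

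For part (2), I check that for $M_9=U_{1,0,0,2}$, $M_{10}=U_{1,0,1,0}$, $M_{11}=U_{1,1,0,2}$ and $M_{12}=U_{1,1,1,2}$ one gets $q_{11}=q_{22}=-1$ and $q_{12}q_{21}=1$. The braiding is then of Cartan type $A_1\times A_1$. By Lemma \ref{nicdim}, $\mathcal{B}(U)\cong\mathcal{B}(\mathbb{K}v_1)\otimes\mathcal{B}(\mathbb{K}v_2)$, and each factor is an exterior algebra on one generator. This gives $\dim\mathcal{B}(U)=4$ with relations $v_1^2=v_2^2=0$ and $v_1v_2-q_{12}v_2v_1=0$. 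To obtain the stated relation $v_1v_2+v_2v_1=0$ I must verify $q_{12}=-1$ in these four cases, rather than only $q_{12}q_{21}=1$. The main obstacle is this exponent bookkeeping, since signs and indices are easy to get wrong. In particular $|j-k|$ and the $l$-dependence in $g_2$ must be handled carefully modulo $4$, and the last step stands or falls on $q_{12}=-1$.
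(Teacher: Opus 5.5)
Your proposal is correct and is the argument the paper intends; its proof only says it is analogous to Lemma \ref{vfnic}. That argument reads the diagonal braiding off the group-like degrees, settles the infinite cases by a fixed vector $v_r\otimes v_r$ or by Heckenberger's list \cite{H09}, and uses Cartan type $A_1\times A_1$ for $M_9,\dots,M_{12}$. The bookkeeping you worried about closes cleanly: $q_{11}=q_{22}=\xi^{l}(-1)^{k(j+1)}$, $q_{12}=(-1)^{l}q_{11}$ and $q_{12}q_{21}=(-1)^{l}$, so only the three diagram shapes of Lemma \ref{vfnic} occur (no $-1$ vertex joined to $\pm\xi$), and $q_{12}=-1$ for all four modules $M_9,\dots,M_{12}$, which gives $v_1v_2+v_2v_1=0$.
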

	\begin{proof}
		The proof is analogous to that of Lemma
		$\ref{vfnic}$, so we omit it here.
	\end{proof}

	Based on Remarks $\ref{fnic1}$ and Lemmas $\ref{vfnic}$,  $\ref{wfnic}$ and $\ref{ufnic}$, we conclude the following Proposition.
	\begin{pro}
		Let $V,W$ be simple objects in $^{H}_{H}\mathcal{YD}$. Then $\mathcal{B}(V\oplus W)$ is finite dimensional and $\mathcal{B}(V\oplus W)\simeq \mathcal{B}(V)\otimes \mathcal{B}(W)$ if and only if $V,W$ satisfy one of the following cases
		
		$(1)~~ V=W=V_{i}, ~i\in  \mathbb{I}_{1,8}$.
		
		$(2)~~ V=M_1, ~or~ M_7, ~ W=V_{i}, ~i\in \mathbb{I}_{1,4}$.
		
		$(3)~~ V=M_8, ~ W=V_{i+4},~ i\in \mathbb{I}_{1,4}$.
		
		$(4)~~ V=M_9, ~or~ M_{10}, ~ W\in \{V_2,V_4,V_5,V_7\}$.
		
		$(5)~~ V=W=M_{i}, i\in \mathbb{I}_{1,12}$.
		
		$(6)~~ V=M_{1}, ~W=M_{6}, ~or~M_{8}$.
		
		$(7)~~ V=M_{i}, ~W=M_{i+2},~i\in \{2,3\}$.
		
		$(8)~~ V=M_{i}, ~W=M_{i+1},~i\in \{6,7,9,11\}$.
		
	\end{pro}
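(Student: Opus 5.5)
The plan is to reduce the statement to Lemma \ref{nicdim} combined with the classification of simple objects with finite-dimensional Nichols algebra. First, since $\mathcal{B}(V\oplus W)$ is finite dimensional, so are $\mathcal{B}(V)$ and $\mathcal{B}(W)$, because the braided subspaces $V$ and $W$ are stable under $c$ (Remark \ref{bra}). By Remark \ref{fnic1} and Lemmas \ref{vfnic}, \ref{wfnic}, \ref{ufnic}, each of $V$ and $W$ must then lie in the list $\{V_1,\dots,V_8,M_1,\dots,M_{12}\}$. Conversely, when $\mathcal{B}(V)$ and $\mathcal{B}(W)$ are finite dimensional, Lemma \ref{nicdim} shows that $\mathcal{B}(V\oplus W)\cong\mathcal{B}(V)\otimes\mathcal{B}(W)$ with finite dimension holds exactly when $c_{W,V}c_{V,W}=\mathrm{id}_{V\otimes W}$. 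The statement therefore reduces to a finite check: for each pair in the list, decide whether the double braiding $c_{W,V}c_{V,W}$ is the identity.

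I would carry out the check in three blocks. First come pairs of one-dimensional modules $V_i\oplus V_j$. Using Proposition \ref{1dim}, with $\delta(v)=x^{j+2k+2l}y^k\otimes v$ and the characters of $x$ and $y$, the double braiding is the scalar $\chi_W(g_V)\chi_V(g_W)$, and I would tabulate it over $i,j\in\mathbb{I}_{1,8}$. Second come mixed pairs of a $V_i$ with an $M_j$. Here I compute $c_{M,V}c_{V,M}(v\otimes m)=g_V\cdot(m_{(-1)}\cdot v)\otimes m_{(0)}$. For $M=V_{i,j,k,l}$ the coaction in Proposition \ref{2dim} involves $t$ and differences $x^{s}-x^{s+2}$, but because $v$ spans a one-dimensional module, only the scalars $\chi_V(x^s y^r t^\epsilon)$ enter. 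Triviality then requires the off-diagonal coefficients $\chi_V(x^{s})-\chi_V(x^{s+2})$ to vanish and the diagonal ones to equal the inverse of $\chi_M$ evaluated at $g_V$. For $M=U_{i,j,k,l}$ the coaction is group-like diagonal, which makes this block easier. Third come pairs of two-dimensional modules $M_i\oplus M_j$. Here I would write $c_{V,W}$ and $c_{W,V}$ as $4\times4$ matrices in the bases $\{v_1,v_2\}$ and check whether their product is the identity.

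I expect the main obstacle to be the last block, where both modules have non-group-like coaction involving $t$. I would first use necessary conditions to prune the pairs. Restricting to the group-like part $\langle x,y\rangle$, the eigenvalues of $x,y$ on each module and the weights appearing in the coaction must be compatible, and this already forces constraints on $(i,j,k,l)$ and $(i',j',k',l')$. Only the surviving pairs, which I expect to be exactly those in cases $(5)$–$(8)$, need the full computation. For these I would verify directly that the double braiding is the identity, using $t^2=x^2y$ and $tx=x^{-1}t$ to simplify the terms in which $t$ acts twice. Finally, I would assemble the surviving pairs from the three blocks, checking them against the list $(1)$–$(8)$, with symmetric pairs identified.
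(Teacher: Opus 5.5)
The reduction you propose is the paper's own. Finiteness of $\mathcal{B}(V\oplus W)$ forces $V,W\in\{V_1,\dots,V_8,M_1,\dots,M_{12}\}$, and Lemma \ref{nicdim} reduces everything to deciding whether $c_{W,V}c_{V,W}=\mathrm{id}$. The paper, too, only treats $M_1$ against one-dimensional modules and leaves the rest to the reader. The gap is that the whole content lies in the finite check you defer, and that check, done with the formulas of Section 4, does not return the list (1)--(8). So your last step, matching the surviving pairs with the list, fails.

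First fix your mixed-block formula. The composite is $v\otimes m\mapsto (g_V\cdot m)_{(-1)}\cdot v\otimes (g_V\cdot m)_{(0)}$, so $g_V$ acts on the $M$-factor. As you wrote it, $g_V$ acts on $v$ and only contributes $\chi_V(g_V)=-1$. Together with $\chi_V(y)=-1$, this would let $M_1$ pair with all eight $V_i$, contradicting case (2).

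Then use what holds for every $V_i$: $\chi_{V_i}(x)=\pm1$, $\chi_{V_i}(y)=-1$, and $g_{V_i}=x^{\alpha}y^{\beta}$ with $\alpha$ odd. This is $\delta(v)=x^{j+2k+2l}y^k\otimes v$ with $j$ odd, and it is in fact forced by the Yetter--Drinfeld condition for $t$. In particular $\chi_{V_i}(x^2)=1$, so your off-diagonal coefficients vanish automatically and only the diagonal conditions matter.

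On every $U_{1,j,k,l}$, $x$ acts by $\mathrm{diag}(\xi,-\xi)$, $y$ acts by a sign, and the coaction is by group-likes, on which $\chi_{V_i}$ takes values $\pm1$. Hence the double braiding of $V_i$ with any of $M_9,\dots,M_{12}$ is diagonal with entries $\pm\xi$. Concretely, $g_{V_2}=xy$, and $M_9$ has coaction $x^2$ on $v_1$ and $x^2y$ on $v_2$, so $c_{M_9,V_2}c_{V_2,M_9}=\xi\,\mathrm{id}$. Case (4) therefore cannot hold.

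Conversely, $M_6=V_{1,0,0,1}$ has $x$ acting by $-1$, $y$ by $1$, and coaction in $\mathrm{span}\{x,x^3\}\otimes M_6$. Its double braiding with $V_i$ is therefore $-\chi_{V_i}(x)\,\mathrm{id}$, which is the identity for $V_5,\dots,V_8$. These pairs are absent from the list, although they are just the $\tau_{49}$-twists of the pairs $(M_7,V_1),\dots,(M_7,V_4)$ of case (2): $\tau_{49}$ is an involution, $M_6^{\tau_{49}}\cong M_7$, $V_1^{\tau_{49}}\cong V_5$, and twisting preserves braidings.

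Also $\chi_{V_j}(g_{V_i})\chi_{V_i}(g_{V_j})=1$ for all $i,j$, so case (1) has to read $V=V_i$, $W=V_j$, as in Theorem A(1).

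So your method is sound, but carried out it corrects the statement rather than proving it. Case (4) must be dropped, $M_6$ with $V_5,\dots,V_8$ must be added, and (1) widened. Only then does your final assembly step close.
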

	\begin{proof}
		Since $\mathcal{B}(V \oplus W)$ is finite dimensional, by Remarks $\ref{fnic1}$ and Lemmas $\ref{vfnic}$, $\ref{wfnic}$ and $\ref{ufnic}$, we have  $V,W\in \{ V_{1}, ..., V_8,M_1,...,M_{12}\}$.
		
		Let $V=M_{1}=\mathbb{K}\{p_1,p_2\},$ $W=\mathbb{K}_{\chi_{i,j,k,l}}=\mathbb{K}\{q\}$, we have
		\begin{gather*}
			c(p_1\otimes q)=(-1)^j q\otimes p_1,
			\qquad
			c(p_2\otimes q)=(-1)^jq\otimes p_2,\\
			c(q\otimes p_1)=(-1)^k p_1\otimes q,\qquad c(q\otimes p_2)=(-1)^kp_2\otimes q.
		\end{gather*}
		Then by Lemma  $\ref{nicdim}$, $\mathcal{B}(V\oplus W)\simeq \mathcal {B}(V)\otimes \mathcal {B}(W)$ if and only if $(-1)^{j+k}=1$,~that is, $W=V_i,i\in \mathbb{I}_{1,4}$.
		
		Next,  we can  perform our  discussions  case by case, and  leave the rest to the reader.
	\end{proof}

	\section{Hopf algebras over $H$}
	In this section, based on the principle of the lifting method, we will determine finite-dimensional Hopf algebras over $H$ such that their infinitesimal braidings are those Yetter-Drinfeld modules listed in $(1),(7),(8),(9),(10)$ of \textbf{Theorem  A}. We first show that the diagrams of these Hopf algebras are Nichols algebras.
	\begin{thm}\label{grA}
		Let $A$ be a finite-dimensional Hopf algebras over $H$ such that its infinitesimal braiding is isomorphic to a Yetter-Drinfeld module $N$ as listed in $(1)$, $(7)$, $(8)$, $(9)$, $(10)$ of  \textbf{Theorem A}. Then $gr A\cong \mathcal{B}(N)\sharp H$.
	\end{thm}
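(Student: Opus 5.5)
We follow the standard argument of the lifting method. Write $\operatorname{gr}A\cong R\sharp H$, where $R=\bigoplus_{n\geq 0}R(n)$ is the diagram of $A$: a connected, strictly graded braided Hopf algebra in ${}^H_H\mathcal{YD}$ with $R(1)\cong N$. Let $S\subseteq R$ be the subalgebra generated by $R(1)$. Then $S\cong\mathcal{B}(N)$, since a strictly graded braided Hopf algebra generated in degree one is the Nichols algebra of its degree-one part. So it suffices to show that $R$ is generated by $R(1)$.

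We dualize. The graded dual $R^{*}=\bigoplus_n R(n)^*$ is again a connected graded braided Hopf algebra in ${}^H_H\mathcal{YD}$. It is generated in degree one exactly when $R$ is strictly graded, i.e.\ when $\mathcal{P}(R)=R(1)$. Dually, $R$ is generated in degree one iff $\mathcal{P}(R^*)=R^*(1)$. Let $S'\subseteq R^*$ be the subalgebra generated by $R^*(1)=N^*$. Then $S'\cong\mathcal{B}(N^*)$, and $N^*$ is again one of the modules of Theorem A of the same shape. Hence it is enough to prove the following: for every $N$ in the families $(1),(7),(8),(9),(10)$, any finite-dimensional connected graded braided Hopf algebra $T$ generated in degree one with $T(1)\cong N$ satisfies $T\cong\mathcal{B}(N)$. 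Equivalently, the defining relations of $\mathcal{B}(N)$ already hold in $T$.

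This reduces to checking the relations of $\mathcal{B}(N)$, all of which lie in degree $2$ (the quadratic relations computed in Lemmas \ref{vfnic}, \ref{ufnic} and the remark before Lemma \ref{vfnic}). The cases are the following.
\begin{itemize}
\item For $N=V_i\oplus V_j$: the relations are $v_i^2=0$, $v_j^2=0$ and $v_iv_j+v_jv_i=0$ (or $v_iv_j-v_jv_i=0$, according to the braiding).
\item For $N=M_i\oplus M_{i'}$ with the pairs from $(7)$--$(10)$: the relations are of the form $v_1^2=v_2^2=v_1v_2+v_2v_1=0$ within each summand, together with (anti)commutation relations between the summands.
\end{itemize}
For each relation $r$ we compute $\Delta_T(r)$ using the braiding on $N\otimes N$ given in Section 4. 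We check that $r$ is primitive in $T$, and that the subspace it spans is a Yetter-Drinfeld submodule $X\subseteq T(2)$. Since $T$ is generated in degree one and $\dim T<\infty$, a primitive element of degree $2$ spanning $X$ gives rise to the braided vector space $X\oplus N$. We show that $r\neq 0$ would force $\dim T=\infty$: for instance, $r$ would be a primitive element $w$ with $c(w\otimes w)=w\otimes w$, or with a braiding of type $A_1$ whose eigenvalue makes $\mathcal{B}$ infinite (Remark \ref{bra}, and Heckenberger's classification \cite{H09}, since all braidings here are diagonal). Therefore $r=0$ in $T$.

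Concretely, for $r=v^2$ with $c(v\otimes v)=-v\otimes v$, the element $v^2$ is primitive and has self-braiding $(-1)^4=1$. For the mixed relations $r=v_iv_j\pm v_jv_i$, where $q_{ij}q_{ji}=1$ by Lemma \ref{nicdim}, $r$ is primitive and its self-braiding is $q_{ii}q_{ij}q_{ji}q_{jj}=1$. In both cases a nonzero $r$ would generate a polynomial subalgebra, contradicting finite dimensionality.

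The main obstacle is the non-diagonal-looking summands $M_i$ coming from two-dimensional simples. There the coaction involves $t$, so primitivity and the braiding of the quadratic elements must be verified against the explicit coaction formulas of Proposition \ref{2dim}. We would first diagonalize the braiding on each $M_i\oplus M_{i'}$, as was done in Lemmas \ref{vfnic} and \ref{ufnic}, and then run the above argument in the diagonal basis.
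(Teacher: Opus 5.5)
Your proposal is correct and follows essentially the same route as the paper: pass to the graded dual (generated in degree one because $\mathcal{P}(R)=R(1)$), show that the quadratic defining relations of the Nichols algebra are primitive there with trivial self-braiding, and conclude they vanish by finite-dimensionality. The paper cites Fang for primitivity where you use $\ker(1+c)$ directly, and checks $c(r\otimes r)=r\otimes r$ from the coaction formulas instead of diagonalizing. One small slip: your sentence asserting $S'\cong\mathcal{B}(N^*)$ is circular as written, since $S'=R^*$ and that isomorphism is exactly what must be proved; your next sentence states the correct reduction, so the argument is unaffected.
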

	\begin{proof}
		Let $S$ be the graded dual of the diagram  $R$  of $A$. Since  $\mathcal P(R)=R(1)$, by{\rm\cite[Lemma 5.5]{AS0}},  $S$ is generated  by $W=S(1)$  as  algebra. Since  $S$ is a graded Hopf algebra in  $^H_H\mathcal{YD}$ with $S(0)=\mathbb{K}$, by {\rm\cite[Proposition 2.2]{AS2}}, it follows that there exists  a surjective morphism  of graded Hopf algebra  $\phi: S\to \mathcal{B}(W)$. If $\mathcal P(S)=S(1)$, then  $S$ is a  Nichols algebras. Under this condition, by {\rm\cite[Lemma 2.4]{AS2}}, R is also a Nichols algebra. To prove that $\mathcal P(S)=S(1)$, it is suffices to verify that  $\phi$ is also an injective morphism, in other words, the relations of $\mathcal B(W)$ also hold in $S$. We conclude that $W$ is one of the Yetter-Drinfeld modules listed in $(1),(7),(8),(9),(10)$ of \textbf{Theorem  A}.
		
		Assume that $W=\Omega_{6}=M_6\oplus M_6$, it is clear that $W$ is generated by $p_1$, $p_2$, $q_1$, $q_2$, with $M_{6}=\mathbb{K}\{p_1,p_2\}=\mathbb{K}\{q_1,q_2\},$	the defining ideal of the Nichols algebra $\mathcal B(W)$ is genetated by the elements
		\begin{gather*}
			p_1^2,\quad p_2^2,\quad p_1p_2+p_2p_1,\quad q_1^2,\quad q_2^2,\quad q_1q_2+q_2q_1,\\
			p_1q_1+q_1p_1+p_2q_2+q_2p_2,\quad p_1q_2+q_1p_2+p_2q_1+q_2p_1.
		\end{gather*}
		
		By {\rm\cite[Theorem 6]{X}}, all those generators of the defining ideal are primitive elements. We show that $c(r\otimes r)=r\otimes r$ for all generators $r$ given  above for the defining ideal.
		Since
		\begin{equation*}
			\begin{split}
				\delta(p_1)&=\frac{1}{2}(\,(1+x^2)x\otimes p_1+\xi (1-x^2) x\otimes p_2\,),\\
				\delta(p_2)&=\frac{1}{2}(\,(1+x^2)x\otimes p_2-\xi (1-x^2)x\otimes p_1\,),\\
			\end{split}
		\end{equation*}
		we have
		\begin{equation*}
			\begin{split}
				\delta(p_1^2)&=\frac{1}{2}(\,(1+x^2)\otimes p_1^2+ (1-x^2) x\otimes p_2^2\,),\\
				\delta(p_2^2)&=\frac{1}{2}(\,(1+x^2)\otimes p_2^2+ (1-x^2)\otimes p_1^2\,),\\
				\delta(p_1p_2)&=\frac{1}{2}(\,(1+x^2)\otimes p_1p_2- (1-x^2) x\otimes p_2p_1\,),\\
				\delta(p_2p_1)&=\frac{1}{2}(\,(1+x^2)\otimes p_2p_1- (1-x^2)\otimes p_1p_2\,).\\
			\end{split}
		\end{equation*}
		Then by the definition of the braiding in $^H_H\mathcal{YD}$, the claim follows. We leave the rest to the reader.
	\end{proof}
	\begin{lem}$($\cite{AS0}, Lemma 6.1$)$
		Let $H$ be a Hopf algebra, $\psi: H\rightarrow H$ an automorphism of
		Hopf algebra, and $V, ~W$ Yetter-Drinfeld modules over H.
		
		$(1)$ Let $V^\psi$ be the same space as that of the underlying $V$ but with action and
		coaction
		\begin{flalign*}
			&h\cdot_\psi v=\psi(h)\cdot v, ~~~\delta^\psi(v)=(\psi^{-1}\otimes id)\delta(v),~~~h\in H,~v\in V.
		\end{flalign*}
		Then $V^\psi$ is also a Yetter-Drinfeld module over H. If $T: V\rightarrow W$ is a morphism in $_H^H\mathcal{YD}$, so is $T^\psi: V^\psi\rightarrow W^\psi$. Moreover, the braiding
		$c: V^\psi\otimes W^\psi\rightarrow W^\psi\otimes V^\psi$ coincides with the braiding $c: V\otimes W\rightarrow W\otimes V$.
		
		$(2)$ If $R$ is an algebra (resp. a coalgebra, a Hopf algebra) in
		$_H^H\mathcal{YD}$, so is $R^\psi$, with the same structural maps.
		
		$(3)$ Let $R$ be a Hopf algebra in $_H^H\mathcal{YD}$. Then the map
		$\varphi: R^\psi\sharp H\rightarrow R\sharp H$ given by $\varphi(r\sharp h)=r\sharp \psi(h)$ is an isomorphism of Hopf algebras.
	\end{lem}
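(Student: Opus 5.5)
The plan is to verify each of the three assertions directly from the Yetter-Drinfeld compatibility condition and the definition of the braiding (\ref{braiding}), using only that $\psi$ is a Hopf algebra automorphism. Being a Hopf automorphism, $\psi$ commutes with $\Delta$, $\varepsilon$ and $S$, and so does $\psi^{-1}$.

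\textbf{Part (1).} First, $V^\psi$ is a left $H$-module, since $\psi$ is an algebra map. It is a left $H$-comodule, since $\psi^{-1}$ is a coalgebra map. For the compatibility, write $\delta^\psi(v)=\psi^{-1}(v_{(-1)})\otimes v_{(0)}$ and compute
\[
\delta^\psi(h\cdot_\psi v)=(\psi^{-1}\otimes\mathrm{id})\delta(\psi(h)\cdot v).
\]
Expanding $\delta(\psi(h)\cdot v)$ by the Yetter-Drinfeld condition in $V$, and using $\Delta(\psi(h))=\psi(h_{(1)})\otimes\psi(h_{(2)})$ and $S\psi=\psi S$, this becomes
\[
h_{(1)}\psi^{-1}(v_{(-1)})S(h_{(3)})\otimes \psi(h_{(2)})\cdot v_{(0)},
\]
which is exactly the required condition for $V^\psi$. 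Next, if $T$ is $H$-linear and $H$-colinear for the original structures, then $T$ is still linear and colinear for the twisted ones, because both are obtained by composing with $\psi$ or $\psi^{-1}\otimes\mathrm{id}$. Finally, the braiding in $V^\psi\otimes W^\psi$ is
\[
c(v\otimes w)=\psi^{-1}(v_{(-1)})\cdot_\psi w\otimes v_{(0)}=\psi\psi^{-1}(v_{(-1)})\cdot w\otimes v_{(0)}=v_{(-1)}\cdot w\otimes v_{(0)},
\]
which coincides with the original braiding.

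\textbf{Part (2).} This follows from (1) once we note that $(V\otimes W)^\psi=V^\psi\otimes W^\psi$ as Yetter-Drinfeld modules: the diagonal action and codiagonal coaction commute with twisting by $\psi$, since $\psi$ is a bialgebra map. Likewise $\mathbb{K}^\psi=\mathbb{K}$. Therefore the multiplication, unit, comultiplication, counit and antipode of $R$ are all morphisms in the category for the twisted structures, by the morphism statement of (1). The braided axioms (in particular the braided bialgebra compatibility, which involves $c$) are unchanged, because $c$ is unchanged.

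\textbf{Part (3).} The map $\varphi$ is clearly bijective, with inverse $r\sharp h\mapsto r\sharp\psi^{-1}(h)$. For multiplicativity, the product in $R^\psi\sharp H$ gives
\[
(r\sharp g)(s\sharp h)=r\,(\psi(g_{(1)})\cdot s)\sharp g_{(2)}h.
\]
Applying $\varphi$ yields $r(\psi(g)_{(1)}\cdot s)\sharp\psi(g)_{(2)}\psi(h)$, which is the product of $r\sharp\psi(g)$ and $s\sharp\psi(h)$ in $R\sharp H$. For comultiplicativity, the coproduct in $R^\psi\sharp H$ is
\[
r^{(1)}\sharp\psi^{-1}(r^{(2)}{}_{(-1)})g_{(1)}\otimes r^{(2)}{}_{(0)}\sharp g_{(2)}.
\]
Applying $\varphi\otimes\varphi$ turns the first $H$-factor into $r^{(2)}{}_{(-1)}\psi(g_{(1)})$, and this matches $\Delta(r\sharp\psi(g))$. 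Units and counits are clearly preserved, and the antipode is then automatic.

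The only point requiring care is keeping track of where $\psi$ versus $\psi^{-1}$ appears. This is mainly in the compatibility check in (1), where one must use $S\circ\psi^{-1}=\psi^{-1}\circ S$; no step is a genuine obstacle.
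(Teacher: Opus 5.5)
Your argument is correct: each part is a direct check using only that $\psi$ and $\psi^{-1}$ are bialgebra maps commuting with the antipode, together with $(V\otimes W)^\psi=V^\psi\otimes W^\psi$ and $\mathbb{K}^\psi=\mathbb{K}$ for part (2). The paper gives no proof of its own and simply imports the result from \cite{AS0}, where the argument is this kind of routine verification, so your approach matches the intended one.
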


	Based on the observation of Andruskiewitsch and Schneider {\rm\cite [Lemma 6.1]{AS0}}: The bosonization of a Nichols algebra is isomorphic to the bosonization of a twist of its Nichols algebra (with respect to a certain automorphism of the Hopf algebra), we first need to characterize the isomorphism relationships between the  simple Yetter-Drinfeld modules and their twists with respect to automorphisms of  $H$.

	\begin{cor}
		There exist the following isomorphic relationships among the simple Yetter-Drinfeld modules, that is, there exist $\tau_{i}\in\text{Aut}(H)$, such that
		\begin{equation*}
			\begin{split}
				(1)&\quad	 V_1^{\tau_{17}}\cong V_2, \quad  V_3^{\tau_{17}}\cong V_4,\\
				(2)&\quad  V_1^{\tau_{49}}\cong V_5,\quad V_2^{\tau_{49}}\cong V_6,\quad V_3^{\tau_{49}}\cong V_7,\quad V_4^{\tau_{49}}\cong V_8,\\
				(3)&\quad  V_2^{\tau_{33}}\cong V_5,\quad \  V_4^{\tau_{33}}\cong V_4,\\
				(4)& \quad M_1^{\tau_{49}}\cong M_8, \quad M_2^{\tau_{49}}\cong M_4,\quad M_3^{\tau_{55}}\cong M_5,\\
				(5)& \quad M_6^{\tau_{49}}\cong M_7,\quad M_9^{\tau_{12}}\cong M_{11}, \quad M_{10}^{\tau_{12}}\cong M_{12},\\
				(6)&\quad \mathcal{B}(\Omega_1)\sharp H\cong \mathcal{B}(\Omega_8)\sharp H, \quad \mathcal{B}(\Omega_6)\sharp H\cong \mathcal{B}(\Omega_7)\sharp H,\\
				(7)&\quad \mathcal{B}(\Omega_i)\sharp H\cong \mathcal{B}(\Omega_{i+2})\sharp H,  \quad i\in\{2,3,9,10\},\\
				(8)&\quad \mathcal{B}(\Omega_6^{(1)})\sharp H\cong \mathcal{B}(\Omega_7^{(3)})\sharp H, 	\quad \mathcal{B}(\Omega_9^{(3)})\sharp H\cong \mathcal{B}(\Omega_{11}^{(3)})\sharp H.
			\end{split}
		\end{equation*}
	\end{cor}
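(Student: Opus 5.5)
The plan is to verify each listed isomorphism directly from the twisting construction $V\mapsto V^\psi$ of \cite[Lemma 6.1]{AS0} recalled above. The Yetter-Drinfeld structures of all simple objects are known explicitly (Propositions \ref{1dim}, \ref{2dim} and the analogous propositions for $W^p$ and $U$). For a given $\tau\in\operatorname{Aut}(H)$, read off from Table 1, the twisted action is $h\cdot_\tau v=\tau(h)\cdot v$ and the twisted coaction is $\delta^\tau=(\tau^{-1}\otimes \mathrm{id})\delta$. By Lemma 6.1(1), $V^\tau$ is again simple in ${}^H_H\mathcal{YD}$, since twisting is an autoequivalence. Hence to identify $V^\tau$ it suffices to compute a few invariants and compare them with the classification in Theorem \ref{sims}.

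\emph{One-dimensional items (1)--(3).} For $V=\mathbb{K}_{\chi_{i,j,k,l}}$, the twisted character is determined by evaluating $\chi_{i,j,k,l}$ on $\tau(x),\tau(y),\tau(t)$. The twisted grouplike is $\tau^{-1}(x^{j+2k+2l}y^k)$. Matching these two data against Proposition \ref{1dim} recovers the new indices $(i',j',k',l')$.

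For example, $\tau_{17}$ sends $x\mapsto x^3$ and fixes $y,t$. So the action is unchanged, since $(-1)^{3i}=(-1)^i$. The grouplike $x^{j+2k+2l}y^k$ becomes $x^{3(j+2k+2l)}y^k$. With $j=1,k=1$ this is the grouplike of $\mathbb{K}_{\chi_{0,1,1,1}}$, giving $V_1^{\tau_{17}}\cong V_2$. The cases $\tau_{49}$ and $\tau_{33}$ are handled the same way; note that $\tau_{33},\tau_{49}$ change the action through $\chi(y)$.

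\emph{Two-dimensional items (4)--(5).} Here one twists the explicit action matrices and coaction of $V_{i,j,k,l}$ or $U_{i,j,k,l}$. Equivalently, one notes that the induced automorphism of $D=D(H^{cop})$ is $\tau^{*-1}\otimes\tau$, and applies it to the matrices of Lemma \ref{2simple}. The resulting $D$-module is then identified by the invariants that separate simple two-dimensional modules in the proof of Lemma \ref{2simple}: the scalars $[y],[b]$, the spectra of $[x],[a]$, and the diagonal or off-diagonal shape of $[t],[c]$ with their entries. Where necessary, one rescales or swaps $v_1,v_2$ to bring the matrices into normal form.

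I expect this step to be the main obstacle. The automorphisms $\tau_{12},\tau_{55}$ send $t$ to non-monomial combinations such as $\tfrac12((1-\xi)xyt+(1+\xi)x^3yt)$. Their dual action on $a,b,c$ must therefore be computed through the dual basis of Remark 3.2, and the coaction formulas involve $t$-terms that must be re-expanded. I would first compute $\tau^*$ on the generators $a,b,c$ once for each needed $\tau$, and then reuse it for every module.

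\emph{Items (6)--(8).} By Lemma 6.1(1), twisting commutes with direct sums and preserves braidings. Hence $(V\oplus W)^\tau=V^\tau\oplus W^\tau$ and $\mathcal{B}(N)^\tau=\mathcal{B}(N^\tau)$, using Lemma 6.1(2) and the intrinsic characterization of Nichols algebras. Lemma 6.1(3) then gives $\mathcal{B}(N)\sharp H\cong\mathcal{B}(N^\tau)\sharp H$. Combining this with (4)--(5) yields the isomorphisms. For instance, $\Omega_1^{\tau_{49}}=M_1^{\tau_{49}}\oplus M_1^{\tau_{49}}\cong M_8\oplus M_8=\Omega_8$. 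Similarly, $(\Omega_6^{(1)})^{\tau_{49}}=M_1^{\tau_{49}}\oplus M_6^{\tau_{49}}\cong M_8\oplus M_7=\Omega_7^{(3)}$, and $(\Omega_9^{(3)})^{\tau_{12}}\cong M_{11}\oplus M_{12}=\Omega_{11}^{(3)}$. The remaining entries of (7) follow in the same way from the appropriate relations in (4)--(5).
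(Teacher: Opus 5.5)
Your overall route is the natural one: twist by $\tau$ as in \cite[Lemma 6.1]{AS0}, identify $V^{\tau}$ from its action and coaction, then pass to bosonizations via $\mathcal{B}(N)^{\tau}=\mathcal{B}(N^{\tau})$ and Lemma 6.1(3). The paper gives no argument for this corollary beyond invoking that lemma. Your sample computations for (1), (2) and (6)--(8) are correct.

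The gap is that you declare the remaining one-dimensional cases ``handled the same way'' without doing them, and one of them fails: the claim $V_4^{\tau_{33}}\cong V_4$ in (3) is false.
\begin{itemize}
\item $V_4=\mathbb{K}_{\chi_{0,3,1,1}}$ has $x\cdot v=v$, $y\cdot v=-v$, $t\cdot v=-\xi v$ and $\delta(v)=x^3y\otimes v$.
\item $\tau_{33}=\tau_{33}^{-1}$ sends $x\mapsto xy$ and fixes $y,t$.
\item So the twist has $x\cdot_{\tau_{33}}v=xy\cdot v=-v$ and $\delta^{\tau_{33}}(v)=\tau_{33}(x^3y)\otimes v=x^3\otimes v$. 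This is $\mathbb{K}_{\chi_{1,3,0,0}}=V_7$, not $V_4$.
\end{itemize}
Your own remark that $\tau_{33}$ changes the action through $\chi(y)$ already rules the claim out. Every $V_i$ has $\chi(y)=-1$, so $\tau_{33}$ flips the $x$-eigenvalue and $V_i^{\tau_{33}}\not\cong V_i$ for all $i$.

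As a cross-check, $(V^{\psi})^{\phi}=V^{\psi\circ\phi}$ and $\tau_{17}\circ\tau_{33}=\tau_{49}$. Then (1) and (2) give $V_4^{\tau_{33}}\cong (V_3^{\tau_{17}})^{\tau_{33}}=V_3^{\tau_{49}}\cong V_7$. So this item must be corrected to $V_4^{\tau_{33}}\cong V_7$ before your argument can prove it. Nothing in (4)--(8) depends on it.

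There are also two smaller points.
\begin{itemize}
\item $\tau_{55}(t)=x^2yt$ is a monomial, so $M_3^{\tau_{55}}\cong M_5$ is routine. The action on $M_3$ is unchanged, and $\tau_{55}$ just permutes $t,x^2t,yt,x^2yt$ in the coaction. Only $\tau_{12}$ involves a non-monomial image of $t$.
\item Your list of distinguishing invariants is too weak if ``spectra of $[x],[a]$'' means separate spectra. $M_2=V_{0,2,1,0}$ and $M_4=V_{0,2,3,0}$ have the same $[x],[y],[t],[b],[c]$, while $[a]$ is $\mathrm{diag}(\xi,-\xi)$ versus $\mathrm{diag}(-\xi,\xi)$, which have the same spectrum. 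You need the joint eigenvalues of the commuting pair $([x],[a])$, or you can simply compare coactions. Under $\tau_{49}$ the term $\tfrac12(xt+x^3t)\otimes v_1$ of $\delta(v_1)$ in $M_2$ becomes $\tfrac12(x^3yt+xyt)\otimes v_1$, which is the corresponding term for $M_4$.
\end{itemize}
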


	\begin{defi}\label{def1}
		Denote by $\mathfrak{U}_{1,1}(\lambda)$ the algebra  generated by $x, y, t, p, q$ satisfying   relations (\ref{3.1}) and
		\begin{gather}
			xp=px,\quad yp=-py,\quad tp=\xi px^2t,\quad xq=qx,\quad yq=-qy,\quad tq=\xi qx^2t	\label{u111},\\
			p^2=q^2=\lambda(1-x^2),\quad pq+qp=\mu(1-x^2).
		\end{gather}
		\label{u112}
		It is a Hopf algebra with its coalgebra structure determined by (\ref{3.2}) and
		\begin{gather}\label{u113}
			\Delta(p)=p\otimes 1+x^3y \otimes p,\quad \Delta(q)=q\otimes 1+x^3y\otimes q.\end{gather}
	\end{defi}
	\begin{defi}\label{def2}
		Denote by $\mathfrak{U}_{1,2}(\lambda)$ the algebra  generated by $x,y,t,p,q$ satisfying   relations (\ref{3.1}), (\ref{u111}) and
		\begin{gather}\label{u121}
			p^2=q^2=\lambda(1-x^2),\quad pq=qp=0.
		\end{gather}
		It is a Hopf algebra with its coalgebra structure determined by (\ref{3.2}) and (\ref{u113}).
	\end{defi}\label{def3}
	\begin{defi}
		Denote by $\mathfrak{U}_{1,3}(\lambda)$ the algebra generated by $x, y, t, p, q$ satisfying  relations (\ref{3.1}), (\ref{u121}) and
		\begin{gather}
			xp=px,\ yp=-py,\ tp=\xi px^2t,\
			xq=qx,\ yq=-qy,\ tq=-\xi qx^2t.\label{u131}
		\end{gather}
		It is a Hopf algebra with its coalgebra structure determined by (\ref{3.2}) and
		\begin{center}$\Delta(p)=p\otimes 1+x^3y \otimes p,\hspace{1em}\Delta(q)=q\otimes 1+xy\otimes q$.
		\end{center}		
	\end{defi}
	
	\begin{defi}\label{def4}
		Denote by $\mathfrak{U}_{1,4}(\lambda)$ the algebra  generated by $x,y,t,p,q$ satisfying  (\ref{3.1}), (\ref{u131})  and
		\begin{gather*}
			p^2=q^2=\lambda(1-x^2),\ pq=qp=\mu(1-x^2).
		\end{gather*}
		It is a Hopf algebra with its coalgebra structure determined by (\ref{3.2}) and (\ref{u113}).					
	\end{defi}
	\begin{defi}\label{def5}
		Denote by $\mathfrak{U}_{1,5}(\lambda,\mu)$ the algebra  generated by $x,y,t,p,q$ satisfyingthe relations (\ref{3.1})  and
		\begin{gather}
			xp=px,\ yp=-py,\ tp=\xi px^2t,\
			xq=-qx,\ yq=-qy,\ tq=\xi qx^2t, \label{u151}\\
			p^2=q^2=\lambda(1-x^2),\ pq=qp=\mu(1-y).\label{u152}
		\end{gather}	
		It is a Hopf algebra with its coalgebra structure determined by (\ref{3.2}) and
		\begin{gather}\label{u153}
			\Delta(p)=p\otimes 1+x^3y \otimes p,\quad\Delta(q)=q\otimes 1+x\otimes q.
		\end{gather}
	\end{defi}
	\begin{defi}\label{def6}
		Denote by $\mathfrak{U}_{1,6}(\lambda,\mu)$ the algebra  generated by $x,y,t,p,q$ satisfying  relations (\ref{3.1}), (\ref{u151}) and
		\begin{gather}\label{u161}
			p^2=q^2=\lambda(1-x^2),\hspace{1em}pq=qp=\mu(1-x^2y).
		\end{gather}
		It is a Hopf algebra with its coalgebra structure determined by (\ref{3.2}) and
		\begin{gather}\label{163}
			\Delta(p)=p\otimes 1+x^3y \otimes p,\quad\Delta(q)=q\otimes 1+x^3\otimes q.
		\end{gather}			
	\end{defi}
	\begin{defi}\label{def7}
		Denote by $\mathfrak{U}_{1,7}(\lambda,\mu)$ the algebra  generated by $x, y, t, p, q$ satisfying  relations (\ref{3.1}), (\ref{u161}) and
		\begin{gather}
			xp=px,\quad yp=-py, \quad tp=\xi px^2t,\quad
			xq=-qx,\quad yq=-qy,\quad tq=-\xi qx^2t.\label{u171}
		\end{gather}
		It is a Hopf algebra with its coalgebra structure determined by (\ref{3.2}) and
		\begin{center}$\Delta(p)=p\otimes 1+x^3y \otimes p,\quad \Delta(q)=q\otimes 1+x^3\otimes q.$
		\end{center}				
	\end{defi}
	\begin{defi}\label{def8}
		Denote by $\mathfrak{U}_{1,8}(\lambda,\mu)$ the algebra  generated by $x,y,t,p,q$ satisfying  relations (\ref{3.1}), (\ref{u171}) and (\ref{u152}).
		It is a Hopf algebra with its coalgebra structure determined by (\ref{3.2}) and (\ref{u153}).
		\begin{pro}
			Assume that  $A$ is a finite-dimensional Hopf algebras with the coradical $H$ such that its infinitesimal braiding is isomorphic to $\Omega_{1,i}$ with $i\in \mathbb{I}_{1,8}$, then $A\cong \mathfrak{U}_{1,i}(\lambda,\mu).$
		\end{pro}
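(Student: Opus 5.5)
By Theorem \ref{grA}, $\operatorname{gr}A\cong\mathcal{B}(\Omega_{1,i})\sharp H$, where $\Omega_{1,i}$ is the pair $V_{a}\oplus V_{b}$ of one-dimensional modules from case $(1)$ of \textbf{Theorem A}. The plan is the standard lifting argument: exhibit generators of $A$, determine which deformations of the Nichols relations are allowed, and then check that the resulting algebras have the right dimension.

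\textbf{Step 1: generators and the relations they inherit.} Choose $p,q\in A_1$ lifting the basis vectors $v,w$ of the two one-dimensional summands. By Proposition \ref{1dim} each summand has coaction $g\otimes v$ with $g\in G(H)$, so the smash coproduct lets us take $p\in\mathcal{P}_{1,g}(A)$ and $q\in\mathcal{P}_{1,g'}(A)$. For the modules listed, $g,g'$ are among $x^3y,\,xy,\,x,\,x^3$, matching the coproducts in Definitions \ref{def1}--\ref{def8}. The $H$-action on $\mathbb{K}_{\chi_{i,j,k,l}}$ gives the commutation relations $xp=\pm px$, $yp=-py$ and $tp=\pm\xi\,px^2t$, read off from the adjoint action in the bosonization; the $x^2$ appears because of the non-grouplike coproduct of $t$. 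Since $H$ is semisimple, these adjoint-action relations lift to $A$ without change; this uses the standard fact that $A_1=H\oplus V\sharp H$ as $H$-bimodules.

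\textbf{Step 2: deforming the quadratic relations.} The relations of $\mathcal{B}(\Omega_{1,i})$ are $v^2=0$, $w^2=0$ and $vw+wv=0$, or $vw-wv=0$ depending on the sign of the mutual braiding. In $A$ the elements $p^2$, $q^2$ and $pq\pm qp$ are skew-primitive:
\[
p^2\in\mathcal{P}_{1,g^2}(A),\qquad q^2\in\mathcal{P}_{1,g'^2}(A),\qquad pq\pm qp\in\mathcal{P}_{1,gg'}(A).
\]
Their images in $\operatorname{gr}A$ vanish, so they lie in $A_0=H$. By the Remark after Definition \ref{H}, $\mathcal{P}_{1,h}(H)=\mathbb{K}\{1-h\}$. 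Hence
\[
p^2=\lambda(1-g^2),\qquad q^2=\lambda'(1-g'^2),\qquad pq\pm qp=\mu(1-gg').
\]
Here $g^2=x^2$, and $gg'$ is $x^2$, $1$, $y$ or $x^2y$, which explains the different right-hand sides $\mu(1-x^2)$, $\mu(1-y)$ and $\mu(1-x^2y)$.

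\textbf{Step 3: constraints from compatibility with $H$.} Conjugate each relation by $x$, $y$ and $t$. Where $1-gg'$ fails to commute appropriately with $t$, or where the left side changes sign while the right side does not, the scalar is forced to vanish; this produces the cases $pq=qp=0$ in Definitions \ref{def2} and \ref{def3}. Rescaling $p$ and $q$ normalizes the constants, and applying the isomorphisms of $\Omega_{1,i}$ should identify $\lambda'$ with $\lambda$. The result is a surjection from $\mathfrak{U}_{1,i}(\lambda,\mu)$ onto $A$.

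\textbf{Step 4: dimension and the main obstacle.} It remains to show $\dim\mathfrak{U}_{1,i}(\lambda,\mu)\le 64=4\cdot 16$, so that the surjection is an isomorphism. I would use the PBW-type spanning set $\{p^aq^bh \mid a,b\in\{0,1\},\ h \text{ in a basis of } H\}$, which follows directly from the relations. I would also check that $\mathfrak{U}_{1,i}$ is a Hopf algebra, for instance by realizing it as a cocycle deformation of $\mathcal{B}(\Omega_{1,i})\sharp H$ or by showing that the relations generate a Hopf ideal; then its coradical filtration gives $\operatorname{gr}\cong\mathcal{B}\sharp H$, and so the dimension is exactly $64$. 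I expect the main obstacle to be Step 3 for the $t$-relations: $t$ is not grouplike, and conjugating $p^2$ or $pq$ by $t$ requires careful use of $tx=x^{-1}t$ and $t^2=x^2y$ to decide which of $\lambda$ and $\mu$ survive.
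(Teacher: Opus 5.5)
Your route is the paper's: Theorem~\ref{grA}, skew-primitivity of $p^2$, $q^2$ and of the braided commutator of $p,q$, the fact $\mathcal{P}_{1,h}(H)=\mathbb{K}\{1-h\}$, a surjection $\mathfrak{U}_{1,i}\to A$, and a $64$-element spanning set. Your way of closing, ``$\dim\le 64$ plus surjectivity'', is the right one.

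\textbf{The gap is in Step~3.} There you need the \emph{same} coefficient $\lambda$ on $p^2$ and $q^2$, as in the definition of $\mathfrak{U}_{1,i}(\lambda,\mu)$. When the two summands are non-isomorphic, the only automorphisms of $\Omega_{1,i}$ are $p\mapsto ap$, $q\mapsto bq$. These send $(\lambda,\lambda')$ to $(a^2\lambda,b^2\lambda')$ and cannot equalize, e.g., $(1,0)$. So ``applying the isomorphisms of $\Omega_{1,i}$'' does not work.

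\textbf{The fix is the $t$-computation you deferred.} Each $V_i$, $i\in\mathbb{I}_{1,8}$, has $j$ odd, so $y\cdot p=-p$ and $t\cdot p=\xi^j p$. Hence $tp=\xi^jpx^2t$, and
\[
tp^2=\xi^{2j}\,p\,x^2p\,x^2t=-p^2t .
\]
Since $1-x^2$ commutes with $t$ and $t$ is invertible, $p^2=\lambda(1-x^2)$ forces $\lambda=0$. Likewise $\lambda'=0$, so the matching is automatic. In the same way $t(pq)=\xi^{j+j'}pq\,t$ and $t(qp)=\xi^{j+j'}qp\,t$, while $gg'\in\langle x^2,y\rangle$ commutes with $t$. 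So the deformation of $pq\pm qp$ is forced to vanish unless $j+j'\equiv 0\pmod 4$ and $gg'\neq 1$.

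\textbf{Two further corrections follow.}
\begin{enumerate}
\item A vanishing deformation gives the Nichols relation $pq\pm qp=0$, not $pq=qp=0$. The latter is false in $A$, because $pq$ has nonzero image $\bar p\bar q$ in $\operatorname{gr}A(2)$. So no algebra imposing $pq=0$ surjects onto $A$.
\item Your Step~4 plan to show $\dim\mathfrak{U}_{1,i}(\lambda,\mu)=64$ for all parameters (e.g.\ by cocycle deformation) cannot succeed. Inside $\mathfrak{U}_{1,i}(\lambda,\mu)$ the same calculation gives $2\lambda(1-x^2)t=0$, so $\lambda\neq 0$ forces $x^2=1$ and the algebra collapses. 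You only need the upper bound at the parameters actually realized in $A$, and there your spanning set suffices.
\end{enumerate}
The paper's proof has the same hole: it asserts $p^2=q^2=\lambda(1-x^2)$ without justification and invokes the Diamond Lemma for arbitrary $(\lambda,\mu)$.

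\textbf{A minor point in Step~2.} Vanishing in $A_2/A_1$ only puts $p^2$ (and $pq\pm qp$) in $A_1$. To land in $A_0$ you also need that $V\sharp H$ contains no nonzero $(1,g^2)$- or $(1,gg')$-skew-primitives. This holds because such an element would lie in $V$ with coaction $g^2$ or $gg'$, and $g^2,gg'\notin\{g,g'\}$.
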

		\begin{proof}
			We prove that claim for $\Omega_{1,1}.$ The proofs for  the remaining cases $\Omega_{1,i}$ follow by the  same argument. By Theorem $\ref{grA}$, there exits an isomorphism of  Hopf algebra $gr(A)\cong \mathcal(\Omega_{1,1})\sharp H$. It suffices to check the relations (6.2) listed in Definition $\ref{def1}$  hold in $A$. Let $V_{1}=\mathbb{K}\{p\},V_2=\mathbb\{q\}$. A direct verification shows that
			\begin{flalign*}
				&\Delta(p^2)=p^2\otimes 1+x^2\otimes p^2,~~~~
				\Delta(q^2)=q^2\otimes 1+x^2\otimes q^2,\\
				&\Delta(pq)=pq\otimes 1+x^2\otimes pq,~~~~
				\Delta(qp)=qp\otimes 1+x^2\otimes qp.
			\end{flalign*}
			Thus $p^2=q^2=\lambda(1-x^2)$ and $pq=qp=\mu(1-x^2)$ for some $\lambda,\mu \in \mathbb{K}$. Consequently there exists  a surjective Hopf algebra homomorphism from $\mathfrak{U}_{1,1}(\lambda ,\mu)$ to $A$. Moreover, every element of $\mathfrak{U}_{1,1}(\lambda,\mu)$  is a linear combination of
			$$
			\{\,p^iq^jx^ky^lt^m\mid i,j,l,m\in \mathbb{I}_{0,1},k\in \mathbb{I}_{0,3}\,\}.
			$$
			  Applying the Diamond Lemma (\cite{G}), this set forms a basis of $\mathfrak{U}_{1,1}(\lambda,\mu)$. Hence $\dim A = \dim \mathfrak{U}_{1,1}(\lambda,\mu)$, and the surjection is an isomorphism. Therefore, $A\cong \mathfrak{U}_{1,1}(\lambda,\mu)$.
		\end{proof}		
	\end{defi}
	
	Let $I=\{\lambda,\mu,\alpha\}$, $J=\{\lambda, \mu,\alpha,\beta,\gamma,\eta \}$ be  sets of parameters, where $\lambda, \mu, \alpha,\beta,\gamma,\eta\in \mathbb{K}$.
	\begin{defi}\label{def9}
		Denote by $\mathfrak{U}_{1}(\lambda,\mu)$ the algebra generated by $x,y,t,p_{1},p_{2},q_{1},q_{2}$ satisfying relations (\ref{3.1}), and
		\begin{gather}
			xp_{1}=p_{1}x,\ yp_{1}=-p_{1}y,\
			tp_{1}=\xi p_{1}x^2t,\
			xp_{2}=p_{2}x,\ yp_{2}=-p_{2}y,\
			tp_{2}=-\xi p_{2}x^2t,\\
			xq_{1}=q_{1}x,\ yq_{1}=-q_{1}y,\
			tq_{1}=\xi q_{1}x^2t,\ xq_{2}=q_{2}x,\ yq_{2}=-q_{2}y,\
			tq_{2}=-\xi q_{2}x^2t,	\\
			p_{1}^2-p_2^2=\lambda(1-x^2),\quad p_{1}p_{2}+p_{2}p_1=0,\\	
			q_{1}^2-q_2^2=\mu(1-x^2),\quad q_{1}q_{2}+q_{2}q_1=0,\\
			p_1q_1+q_1p_1+p_2q_2+q_2p_2=0,\\
			p_1q_2+q_1p_2+p_2q_1+q_2p_1=0.
		\end{gather}
		
		It is a Hopf algebra with its coalgebra structure determined by (\ref{3.2}) and
		\begin{gather*}
			\Delta(p_1)=p_1\otimes 1+\frac{1}{2}(\,(1+x^2)y\otimes p_1-(1-x^2)y\otimes p_2\,),\\
			\Delta(p_2)=p_2\otimes 1+\frac{1}{2}(\,(1+x^2)y\otimes p_2-(1-x^2)y\otimes p_1\,),\\
			\Delta(q_1)=q_1\otimes 1+\frac{1}{2}(\,(1+x^2)y\otimes q_1-(1-x^2)y\otimes q_2\,),\\
			\Delta(q_2)=q_2\otimes 1+\frac{1}{2}(\,(1+x^2)y\otimes q_2-(1-x^2)y\otimes q_1\,).
		\end{gather*}
		\begin{pro}
			Assume that  $A$ is a finite-dimensional Hopf algebra with  the coradical $H$  such that its infinitesimal braiding is $\Omega_{1}$, then $A\cong \mathfrak{B}(\Omega_{1})\sharp H$.
		\end{pro}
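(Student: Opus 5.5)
The plan follows the pattern of the preceding proposition for $\Omega_{1,1}$. By Theorem \ref{grA}, $\operatorname{gr}A\cong\mathcal{B}(\Omega_1)\sharp H$ with $\Omega_1=M_1\oplus M_1$. So $A$ is generated by $H$ together with liftings $p_1,p_2,q_1,q_2$ of the basis elements of the two copies of $M_1$. These liftings can be chosen so that the action relations and coproducts are exactly those of $\mathcal{B}(\Omega_1)\sharp H$: one uses the standard projection onto isotypic components under the semisimple Hopf algebra $H$ (the relevant socle is $H\oplus \Omega_1\otimes H$ inside $A_1$). What remains is to determine how the defining relations of $\mathcal{B}(\Omega_1)$ deform in $A$. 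As listed in the proof of Theorem \ref{grA}, these relations are
\begin{gather*}
p_1^2,\ p_2^2,\ p_1p_2+p_2p_1,\ q_1^2,\ q_2^2,\ q_1q_2+q_2q_1,\\
p_1q_1+q_1p_1+p_2q_2+q_2p_2,\ p_1q_2+q_1p_2+p_2q_1+q_2p_1.
\end{gather*}

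First, compute the coproducts of these quadratic elements in $A$ from the given formulas for $\Delta(p_i),\Delta(q_i)$. By \cite[Theorem 6]{X} and the computation in Theorem \ref{grA}, each relation $r$ is primitive in $\mathcal{B}$-degree two, so in $A$ the element $r$ is skew-primitive up to sums of the form $g\otimes r'$. The relations $p_1^2,p_2^2$ get mixed by the coaction via $\frac12(1\pm x^2)$; a typical outcome is that $p_1^2-p_2^2$ and $p_1^2+p_2^2$ behave differently. Organize the relations into $H$-comodule combinations whose coproduct is of the form $r\otimes 1+g\otimes r$ with $g\in G(H)$ (for example $g=1$ or $x^2$), or $r\otimes 1+h\otimes r$ with $h$ in the coradical but not group-like. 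Then $r-\text{const}\cdot(1-g)$ is primitive in $A$ and lies in $A_1$. It is also an element of $A$ whose image in $\operatorname{gr}A$ vanishes in degree two, hence in degree $\leq 1$. By Remark 1 of Definition \ref{H}, $\mathcal{P}_{1,g}(H)=\mathbb{K}\{1-g\}$, and $\mathcal{P}_{1,g}$ of $A$ in degree one meets $\Omega_1\otimes H$ only for the specific $g$ occurring as group-likes in the coaction. Comparing degrees, we get $r=\lambda_r(1-g)$.

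Second, use the $H$-action to kill constants. The element $r$ must transform under $x,y,t$ as the corresponding degree-two component of $\Omega_1\otimes\Omega_1$, while $1-g$ transforms trivially, or by conjugation under $t$. If these transformation rules disagree, then $\lambda_r=0$. Using $yp_i=-p_iy$ and $tp_1=\xi p_1x^2t$, $tp_2=-\xi p_2x^2t$, one expects the following: only $p_1^2-p_2^2$ and $q_1^2-q_2^2$ can acquire the constant terms $\lambda(1-x^2)$ and $\mu(1-x^2)$, and the mixed and anticommutator relations must vanish. This yields a surjection $\mathfrak{U}_1(\lambda,\mu)\to A$.

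Third, show via the Diamond Lemma that $\mathfrak{U}_1(\lambda,\mu)$ has the basis of ordered monomials in $p_1,p_2,q_1,q_2$ (each exponent at most one) times the basis of $H$. This gives dimension $16\cdot 16=\dim A$, so the surjection is an isomorphism.

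The statement asserts $A\cong\mathcal{B}(\Omega_1)\sharp H$, i.e.\ no nontrivial deformation. The final step is therefore to show that the parameters $\lambda,\mu$ can be removed, or else are forced to vanish by compatibility, e.g.\ the requirement that $t$ act consistently on $p_1^2-p_2^2$ versus $1-x^2$, since $t x^2=x^2t$ but $t$ acts on $p_1^2$ by the scalar $\xi^2$ times $x^2$-conjugation. The computation $t(p_1^2-p_2^2)=-(p_1^2-p_2^2)t$ versus $t(1-x^2)=(1-x^2)t$ forces $\lambda=\mu=0$. This last compatibility check, along with getting the coaction bookkeeping for the mixed relations right, is the main obstacle.
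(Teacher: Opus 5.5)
Your plan follows the paper's route: compute $\Delta$ of the quadratic relations, recognise $(1,g)$-skew-primitives, write them as multiples of $1-g$, and kill the scalar with $t$. The part you actually carry out is right. $p_1^2+p_2^2$ is primitive, $p_1^2-p_2^2\in\mathcal{P}_{1,x^2}(A)=\mathbb{K}(1-x^2)$, and $tp_1^2=-p_1^2t$ kills the constant. The gap is the step you defer, the mixed relations, and it cannot be closed.

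First, the list you copy does not present $\mathcal{B}(\Omega_1)$. On $\Omega_1$, $x$ acts trivially and $y$ acts by $-1$. Moreover $\delta(p_1+p_2)=x^2y\otimes(p_1+p_2)$ and $\delta(p_1-p_2)=y\otimes(p_1-p_2)$. So the braiding is $-\tau$ and $\mathcal{B}(\Omega_1)$ is the exterior algebra. Its relations include $p_iq_j+q_jp_i$ for all four pairs, and your third step (ordered monomials with exponents at most one) needs all four.

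The two mixed elements you list are the sums $(p_1q_1+q_1p_1)+(p_2q_2+q_2p_2)$ and $(p_1q_2+q_2p_1)+(p_2q_1+q_1p_2)$. These are primitive and vanish for free. The corresponding differences lie in $\mathcal{P}_{1,x^2}(A)$, so each is a multiple of $1-x^2$. For the first difference, $t$ anticommutes ($tp_1q_1=\xi^2p_1q_1t$), so its constant dies. But for $D=(p_1q_2+q_2p_1)-(p_2q_1+q_1p_2)$ you get $tp_1q_2=\xi p_1x^2(-\xi)q_2x^2t=p_1q_2t$, and likewise for the other three terms. So $D$ commutes with $x,y,t$ exactly as $1-x^2$ does, and neither the coproduct nor the $H$-action forces $D=0$.

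This is not just an unchecked case: the deformation really occurs. Replace $D=0$ by $D=\delta(1-x^2)$ and keep $(p_1q_2+q_2p_1)+(p_2q_1+q_1p_2)=0$ together with the other eight exterior relations. After taking sums and differences, every defining relation is skew-primitive, so the quotient of $T(\Omega_1)\sharp H$ is a Hopf algebra. Split it by the central idempotents $\frac12(1\pm x^2)$:
\begin{itemize}
\item On the block $x^2=1$, it coincides with the corresponding block of $\mathcal{B}(\Omega_1)\sharp H$.
\item On the block $x^2=-1$, the relations read $p_1q_2+q_2p_1=\delta$ and $p_2q_1+q_1p_2=-\delta$. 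So the $p_i,q_j$ generate the Clifford algebra of a nondegenerate form. This form is preserved by conjugation by $x,y,t$ (for instance $tp_1t^{-1}=-\xi p_1$ and $tq_2t^{-1}=\xi q_2$ there).
\end{itemize}
Hence the quotient has dimension $256$, coradical $H$ and infinitesimal braiding $\Omega_1$. For $\delta\neq0$ its Jacobson radical has dimension $120$ rather than $240$, so it is not isomorphic to $\mathcal{B}(\Omega_1)\sharp H$.

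So your argument can at best produce a one-parameter family, not the stated isomorphism. The paper's own proof disposes of these relations with ``similarly'' and has the same hole. Separately, the Diamond-Lemma count for $\mathfrak{U}_1(\lambda,\mu)$ in your third step fails unless $\lambda=\mu=0$: $tp_1^2=-p_1^2t$ forces $x^2=1$ in that algebra when $\lambda\neq0$.
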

		\begin{proof}
			Let $M_{1}=\mathbb{K}\{p_1,p_2\}, M_{2}=\mathbb{K}\{q_1,q_2\}$. By Theorem $\ref{grA}$, we have that $gr(A)\cong \mathfrak{B}(\Omega_{1})\sharp H$. Recall that $\mathfrak{B}(\Omega_{1})\sharp H$ is the algebra generated by $p_1, p_2, q_1, q_2, x, y, t$ with $p_1, p_2, q_1, q_2$ satisfying the relations of $\mathfrak{B}(\Omega_{1})$, $x,y,t$ satisfying the relations of $H$, and all together satisfying the relations:
			\begin{equation}
				\begin{split}
					xp_1&=p_1x,\ yp_1=-p_1y,\ tp_1=\xi p_1x^2t,\  xp_2=p_2x,\ yp_2=-p_2y,\ tp_2=-\xi p_2x^2t,	\\
					xq_1&=q_1x,\  yq_1=-q_1y,\ tq_1=\xi q_1x^2t,\ xq_2=q_2x,\ yq_2=-q_2y,\ tq_2=-\xi q_2x^2t.
				\end{split}
			\end{equation}
			After a direct computation, we  konw that
			\begin{gather*}
				\Delta(p_1^{2}+p_2^{2})=(p_1^{2}+p_2^{2})\otimes 1 +1\otimes (p_1^{2}+p_2^{2}),\\
				\Delta(p_1^{2}-p_2^{2})=(p_1^{2}-p_2^{2})\otimes 1 +1\otimes (p_1^{2}-p_2^{2}).
			\end{gather*}
			Then $p_1^{2}=\lambda(1-x^2)$, $p_2^{2}=-\lambda(1-x^2)$ for some $\lambda\in \mathbb{K}$. Since $tp_{1}^{2}=-p_{1}^{2}t$, the relations $p_{1}^{2}=p_2^{2}=0$ hold in $A$. Similary, the  relations $p_1p_2+p_2p_1=0,~q_1^{2}=q_2^{2}=0,~ q_1q_2+q_2q_1=0,~p_1q_1+q_1p_1+p_2q_2+q_2p_2=0,~p_1q_2+q_2p_1+p_2q_1+q_1p_2=0.$
		\end{proof}

	\end{defi}\label{def10}
	
	\begin{defi}
		Denote by $\mathfrak{U}_{2}(J)$ the algebra  generated by $x,y,t,p_{1},p_{2},q_{1},q_{2}$ satisfying relations (\ref{3.1}) , and
		\begin{gather}
			xp_{1}=p_{1}x,\ yp_{1}=p_{1}y,\
			tp_{1}=-p_1t,\
			xp_{2}=-p_{2}x,\ yp_{2}=p_{2}y,\
			tp_{2}= p_{2}t,\\
			xq_{1}=q_{1}x,\ yq_{1}=q_{1}y,\
			tq_{1}= -q_{1}t,\ xq_{2}=-q_{2}x,\ yq_{2}=q_{2}y,\
			tq_{2}=q_{2}t,\\
			p_{1}^2=\lambda(1-y)+\mu(1-x^2y), \,p_2^2=\lambda(1-y)-\mu(1-x^2y),\ p_{1}p_{2}+p_{2}p_1=0,\label{u21}\\
			q_{1}^2=\alpha(1-y)+\beta(1-x^2y),\,q_2^2=\alpha(1-y)-\beta(1-x^2y),\ q_{1}q_{2}+q_{2}q_1=0, \label{u22}\\
			p_1q_1+q_1p_1=\gamma(1-y)+\eta(1-x^2y), \,p_2q_2+q_2p_2=\gamma(1-y)-\eta(1-x^2y), \label{u23}\\
			p_1q_2+q_2p_1+p_2q_1+q_1p_2=0. \label{u24}
		\end{gather}
		It is a Hopf algebra with its coalgebra structure determined by (\ref{3.2}) and
		\begin{equation*}
			\begin{split}
				\Delta(p_1)&=p_1\otimes 1+\frac{1}{2}(\,(1+x^2)xt\otimes p_1+\xi(1-x^2)xyt\otimes p_2\,),\\
				\Delta(p_2)&=p_2\otimes 1+\frac{1}{2}(\,\xi(1-x^2)xyt\otimes p_1+(1+x^2)xyt\otimes p_2\,),\\
				\Delta(q_1)&=q_1\otimes 1+\frac{1}{2}(\,(1+x^2)xt\otimes q_1+\xi(1-x^2)xyt\otimes q_2\,),\\
				\Delta(q_2)&=q_2\otimes 1+\frac{1}{2}(\,\xi(1-x^2)xyt\otimes q_1+(1+x^2)xyt\otimes q_2\,).
			\end{split}
		\end{equation*}
	\end{defi}
	\begin{defi}\label{def11}
		Denote by $\mathfrak{U}_{3}(I)$ the algebra  generated by $x,y,t,p_{1},p_{2},q_{1},q_{2}$ satisfying relations (\ref{3.1}), and
		\begin{gather}
			xp_{1}=p_{1}x,\ yp_{1}=p_{1}y,\
			tp_{1}=- p_{1}t,\
			xp_{2}=-p_{2}x,\ yp_{2}=p_{2}y,\
			tp_{2}=-p_{2}t,\\
			xq_{1}=q_{1}x,\ yq_{1}=q_{1}y,\
			tq_{1}= -q_{1}t,\ xq_{2}=-q_{2}x,\ yq_{2}=q_{2}y,\
			tq_{2}=-q_{2}t,\\
			p_{1}^2=p_2^2=\lambda(1-y),\ p_{1}p_{2}+p_{2}p_1=0,\\	
			q_{1}^2=q_2^2=\mu(1-y),\ q_{1}q_{2}+q_{2}q_1=0,\\
			p_1q_1+q_1p_1+p_2q_2+q_2p_2=\alpha(1-x^2y),\\
			p_1q_2+q_1p_2+p_2q_1+q_2p_1=0.
		\end{gather}
		It is a Hopf algebra with its coalgebra structure determined by (\ref{3.2})  and
		\begin{equation*}
			\begin{split}
				\Delta(p_1)&=p_1\otimes 1+\frac{1}{2}(\,(1+x^2)t\otimes p_1+(1-x^2)yt\otimes p_2\,),\\
				\Delta(p_2)&=p_2\otimes 1+\frac{1}{2}(\,(1+x^2)yt\otimes p_2-(1-x^2)yt\otimes p_1\,),\\
				\Delta(q_1)&=q_1\otimes 1+\frac{1}{2}(\,(1+x^2)t\otimes q_1+(1-x^2)yt\otimes q_2\,),\\
				\Delta(q_2)&=q_2\otimes 1+\frac{1}{2}(\,(1+x^2)yt\otimes q_2-(1-x^2)yt\otimes q_1\,).
			\end{split}
		\end{equation*}
	\end{defi}
	\begin{defi}\label{def12}
		Denote by $\mathfrak{U}_{6}(I)$ the algebra generated by $x,y,t,p_{1},p_{2},q_{1},q_{2}$ satisfying relations (\ref{3.1}), and
		\begin{gather}
			xp_{1}=-p_{1}x,\
			yp_{1}=p_{1}y,\
			tp_{1}=p_{1}t,\
			xp_{2}=-p_{2}x,\
			yp_{2}=p_{2}y,\
			tp_{2}=-p_{2}t,\\
			xq_{1}=-q_{1}x,\
			yq_{1}=q_{1}y,\
			tq_{1}=q_{1}t,\
			xq_{2}=-q_{2}x,\
			yq_{2}=q_{2}y,\
			tq_{2}=-q_{2}t,	\\
			p_{1}^2=p_2^2=0,\
			p_{1}p_{2}+p_{2}p_1=\lambda(1-x^2),\\
			q_{1}^2=q_2^2=0,\
			q_{1}q_{2}+q_{2}q_1=\mu(1-x^2),\\
			p_1q_1+q_1p_1+p_2q_2+q_2p_2=0,\\
			p_1q_2+q_1p_2+p_2q_1+q_2p_1=\alpha(1-x^2).
		\end{gather}
		It is a Hopf algebra with its coalgebra structure determined by (\ref{3.2})  and
		\begin{equation*}
			\begin{split}
				\Delta(p_1)&=p_1\otimes 1+\frac{1}{2}(\,(1+x^2)x\otimes p_1+\xi (1-x^2) x\otimes p_2\,),\\
				\Delta(p_2)&=p_2\otimes 1+\frac{1}{2}(\,(1+x^2)x\otimes p_2-\xi (1-x^2)x\otimes p_1\,),\\
				\Delta(q_1)&=q_1\otimes 1+\frac{1}{2}(\,(1+x^2)x\otimes q_1+\xi (1-x^2) x\otimes q_2\,),\\
				\Delta(q_2)&=q_2\otimes 1+\frac{1}{2}(\,(1+x^2)x\otimes q_2-\xi (1-x^2)x\otimes q_1\,).
			\end{split}
		\end{equation*}
	\end{defi}

	\begin{defi}\label{def13}
		Denote by $\mathfrak{U}_{9}(I)$ the algebra  generated by $x,y,t,p_{1},p_{2},q_{1},q_{2}$ satisfying relations (\ref{3.1}), and
		\begin{gather}
			xp_{1}=\xi p_{1}x,\
			yp_{1}=p_{1}y,\
			tp_{1}=-p_{2}t,\
			xp_{2}=-\xi p_{2}x,\
			yp_{2}=p_{2}y,\
			tp_{2}=p_{1}t,\\
			xq_{1}=\xi q_{1}x,\
			yq_{1}=q_{1}y,\
			tq_{1}=-q_{2}t,\
			xq_{2}=-\xi q_{2}x,\
			yq_{2}=q_{2}y,\
			tq_{2}=q_{2}t,\\
			p_{1}^2=p_2^2=0,\
			p_{1}p_{2}+p_{2}p_1=\lambda(1-y),\\	
			q_{1}^2=q_2^2=0,\
			q_{1}q_{2}+q_{2}q_1=\mu(1-y),\\
			p_1q_1+q_1p_1+p_2q_2+q_2p_2=0,\\
			p_1q_2+q_1p_2+p_2q_1+q_2p_1=\alpha(1-y).
		\end{gather}
		It is a Hopf algebra with its coalgebra structure determined by (\ref{3.2})  and
		\begin{equation*}
			\begin{split}
				\Delta(p_1)&=p_1\otimes 1+x^2\otimes p_1,\\
				\Delta(p_2)&=p_2\otimes 1+x^2y\otimes p_2,\\
				\Delta(q_1)&=q_1\otimes 1+x^2\otimes q_1,\\
				\Delta(q_2)&=q_2\otimes 1+x^2y\otimes q_2.
			\end{split}
		\end{equation*}
	\end{defi}
	\begin{defi}\label{def14}
		Denote by $\mathfrak{U}_{10}(I)$ the algebra  generated by $x,y,t,p_{1},p_{2},q_{1},q_{2}$ satisfying relations (\ref{3.1}), and
		\begin{gather}
			xp_{1}=\xi p_{1}x,\
			yp_{1}=p_{1}y,\
			tp_{1}=-p_{2}t,\
			xp_{2}=-\xi p_{2}x,\
			yp_{2}=p_{2}y,\
			tp_{2}=-p_{1}t,\\
			xq_{1}= \xi q_{1}x,\
			yq_{1}=q_1y,\
			tq_{1}=-q_{2}t,\
			xq_{2}=-\xi q_{2}x,\
			yq_{2}=q_{2}y,\
			tq_{2}=-q_{1}t,\\
			p_{1}^2=p_2^2=0,\
			p_{1}p_{2}+p_{2}p_1=\lambda(1-y),\\	
			q_{1}^2=q_2^2=0,\
			q_{1}q_{2}+q_{2}q_1=\mu(1-y),\\
			p_1q_1+q_1p_1+p_2q_2+q_2p_2=0,\\
			p_1q_2+q_1p_2+p_2q_1+q_2p_1=\alpha(1-y).
		\end{gather}
		It is a Hopf algebra with its coalgebra structure determined by (\ref{3.2})  and
		\begin{gather*}
			\Delta(p_1)=p_1\otimes 1+x^2y\otimes p_1,\\
			\Delta(p_2)=p_2\otimes 1+x^2\otimes p_2,\\
			\Delta(q_1)=q_1\otimes 1+x^2y\otimes q_1,\\
			\Delta(q_2)=q_2\otimes 1+x^2\otimes q_2.
		\end{gather*}
	\end{defi}
	\begin{pro}\label{6u2}
		$(1)$	Assume that $A$ is a finite-dimensional Hopf algebra with the coradical $H$ such that the infinitesimal braiding is isomorphic to $\Omega_{i}$ for some $i\in \{2,3,6,9,10\}$, then $A\cong \mathfrak{U}_{i}(I)$.
		
		$(2)$ $\mathfrak{U}_{2}(I)\cong \mathfrak{U}_{2}(I^{'})$ if and only if there exist nonzero parameters $a_1$, $a_2$, $b_1$, $b_2$ such that
		\begin{equation}
			\begin{split}
				\label{(var7)}
				a_1^2\lambda^{'}+a_1a_2\gamma^{'}+a_2^2\alpha^{'}&=\lambda,\\
				a_1^2\mu^{'}+a_1a_2\eta^{'}+a_2^2\beta^{'}&=\mu,\\
				b_1^2\lambda^{'}+b_1b_2\gamma^{'}+b_2^2\alpha^{'}&=\alpha,\\	
				b_1^{2}\mu^{'}+b_1b_2\eta^{'}+b_2^2\beta^{'}&=\beta,\\	2a_1b_1\lambda^{'}+a_1b_2\gamma^{'}+a_2b_1\gamma^{'}+2a_2b_2\alpha^{'}&=\gamma,\\
				2a_1b_1\mu^{'}+a_1b_2\eta^{'}+a_2b_1\eta^{'}+2a_2b_2\beta^{'}&=\eta,
			\end{split}
		\end{equation}
		or satisfying
		\begin{equation}
			\begin{split}
				\label{(var8)}
				a_1^{2}\lambda^{'}&+a_2^{2}\alpha^{'}=\lambda,\\
				a_1^{2}\mu^{'}&+a_2^{2}\beta^{'}=\mu,\\
				b_1^{2}\lambda^{'}&+b_2^{2}\alpha^{'}=\alpha,\\
				b_1^{2}\mu^{'}&+a_2^{2}\beta^{'}=\beta.\\
			\end{split}
		\end{equation}
		$(3)$ $\mathfrak{U}_{3}(I)\cong \mathfrak{U}_{3}(I^{'})$ if and only if there exist nonzero parameters $a_1$, $a_2$, $b_1$, $b_2$ such that
		\begin{equation}\label{(var6)}
			\begin{split}
				a_1^2\lambda^{'}+a_1a_2\alpha^{'}+a_2^2\mu^{'}&=\lambda,\\
				b_1^2\lambda^{'}+b_1b_2\alpha^{'}+b_2^2\mu^{'}&=\mu,\\		2a_1b_1\lambda^{'}+a_1b_2\alpha^{'}+a_2b_1\alpha^{'}+2a_2b_2\mu^{'}&=\alpha.
			\end{split}
		\end{equation}
		$(4)$ $\mathfrak{U}_{6}(I)\cong \mathfrak{U}_{6}(I^{'})$ if and only if there exist nonzero parameters $a_1,a_2$ such that
		\begin{gather}\label{(var4.1)}
			a_1^{2}\lambda^{'}=\lambda,\ a_2^{2}\mu^{'}=\mu,
		\end{gather}
		or satisfying
		\begin{gather}\label{(var4.2)}
			a_1^{2}\mu^{'}=\lambda,\ a_2^{2}\lambda^{'}=\mu,
		\end{gather}
		or satisfying
		\begin{gather} \label{(var4.3)}
			a_1^{2}\lambda^{'}=-\lambda,\ a_2^{2}\mu^{'}=-\mu,
		\end{gather}
		or satisfying
		\begin{gather} \label{(var4.4)}
			a_1^{2}\mu^{'}=-\lambda,\ a_2^{2}\lambda^{'}=-\mu.
		\end{gather}
		$(5)$ For $i\in\{9,10\}$, $\mathfrak{U}_{i}(I)\cong \mathfrak{U}_{i}(I^{'})$ if and only if there exist nonzero parameters $a_1$, $a_2$, $b_1$, $b_2$ such that
		\begin{equation}\label{(var2)}
			\begin{split}
				a_1^2\lambda^{'}+a_1a_2(\alpha^{'}+\beta^{'})+a_2^2\mu^{'}&=\lambda,\\
				b_1^2\lambda^{'}+b_1b_2(\alpha^{'}+\beta^{'})+b_2^2\mu^{'}&=\mu,\\		a_1b_1\lambda^{'}+a_1b_2\alpha^{'}+a_2b_1\beta^{'}+a_2b_2\mu^{'}&=\alpha,\\	
				a_1b_1\lambda^{'}+a_1b_2\beta^{'}+a_2b_1\alpha^{'}+a_2b_2\mu^{'}&=\beta,
			\end{split}
		\end{equation}
		or satisfying
		\begin{equation}\label{(var3)}
			\begin{split}
				-\xi( a_1^2\lambda^{'}+a_1a_2(\alpha^{'}+\beta^{'})+a_2^2\mu^{'})&=\lambda,\\
				-\xi (b_1^2\lambda^{'}+ b_1b_2(\alpha^{'}+\beta^{'})+ b_2^2\mu^{'})&=\mu,\\		-\xi(a_1b_1\lambda^{'}+a_1b_2\alpha^{'}+a_2b_1\beta^{'}+a_2b_2\mu^{'})&=\alpha,\\	
				-\xi(a_1b_1\lambda^{'}+a_1b_2\beta^{'}+a_2b_1\alpha^{'}+a_2b_2\mu^{'})&=\beta,
			\end{split}
		\end{equation}
		or satisfying
		\begin{equation}\label{(var4)}
			\begin{split}
				-( a_1^2\lambda^{'}+a_1a_2(\alpha^{'}+\beta^{'})+a_2^2\mu^{'})&=\lambda,\\
				-(b_1^2\lambda^{'}+ b_1b_2(\alpha^{'}+\beta^{'})+ b_2^2\mu^{'})&=\mu,\\		-(a_1b_1\lambda^{'}+a_1b_2\alpha^{'}+a_2b_1\beta^{'}+a_2b_2\mu^{'})&=\alpha,\\	
				-(a_1b_1\lambda^{'}+a_1b_2\beta^{'}+a_2b_1\alpha^{'}+a_2b_2\mu^{'})&=\beta,
			\end{split}
		\end{equation}
		or satisfying
		\begin{equation}\label{(var5)}
			\begin{split}
				\xi( a_1^2\lambda^{'}+a_1a_2(\alpha^{'}+\beta^{'})+a_2^2\mu^{'})&=\lambda,\\
				\xi(b_1^2\lambda^{'}+ b_1b_2(\alpha^{'}+\beta^{'})+ b_2^2\mu^{'})&=\mu,\\		\xi(a_1b_1\lambda^{'}+a_1b_2\alpha^{'}+a_2b_1\beta^{'}+a_2b_2\mu^{'})&=\alpha,\\	
				\xi(a_1b_1\lambda^{'}+a_1b_2\beta^{'}+a_2b_1\alpha^{'}+a_2b_2\mu^{'})&=\beta.
			\end{split}
		\end{equation}
	\end{pro}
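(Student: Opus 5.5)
For part $(1)$ I would follow the same scheme as the proofs for $\Omega_{1,i}$ and $\Omega_1$. Fix $i\in\{2,3,6,9,10\}$ and write $M_i=\mathbb{K}\{p_1,p_2\}$ and $M_i=\mathbb{K}\{q_1,q_2\}$ for the two summands of $\Omega_i=M_i\oplus M_i$. By Theorem \ref{grA}, $\operatorname{gr}A\cong\mathcal{B}(\Omega_i)\sharp H$, so $A$ is generated by $H$ together with lifts of $p_1,p_2,q_1,q_2$ to $A_1$. These lifts can be chosen so that the comultiplication formulas in the corresponding Definition and the $H$-action relations hold exactly, because $A_1\cong A_0\oplus (V\sharp H)$ as Hopf bimodules over the cosemisimple $H$. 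The defining ideal of $\mathcal{B}(\Omega_i)$ is generated by
\[
p_1^2,\ p_2^2,\ p_1p_2+p_2p_1,\ q_1^2,\ q_2^2,\ q_1q_2+q_2q_1,\ p_1q_1+q_1p_1+p_2q_2+q_2p_2,\ p_1q_2+q_1p_2+p_2q_1+q_2p_1 .
\]
For each generator $r$ I would compute $\Delta(r)$ in $A$ from the coproducts of the $p$'s and $q$'s, using the coaction formulas $\delta(p_j)$ as in the proof of Theorem \ref{grA}. Suitable linear combinations of the generators (for instance $p_1^2\pm p_2^2$) are skew-primitive of type $(1,g)$ with $g\in\{x^2,y,x^2y\}$, modulo $A_1$-terms that vanish for degree reasons. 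Since $\mathcal{P}_{1,g}(H)=\mathbb{K}\{1-g\}$, each such combination equals a scalar multiple of $1-g$ in $A$. Commuting both sides with $x,y,t$ then kills the scalar whenever $g$ does not have the same $H$-action character as $r$; this is how $tp_1^2=-p_1^2t$ forced $p_1^2=0$ in the $\Omega_1$ case. The scalars that survive are exactly the parameters $\lambda,\mu,\alpha$ (respectively $J$ for $\mathfrak{U}_2$). This gives a surjection $\mathfrak{U}_i(I)\to A$. The Diamond Lemma then shows that the monomials $p_1^{a}p_2^{b}q_1^{c}q_2^{d}h$, with $a,b,c,d\in\{0,1\}$ and $h$ in the basis of $H$, form a basis, so $\dim\mathfrak{U}_i(I)=16\cdot16=\dim A$ and the surjection is an isomorphism.

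For parts $(2)$–$(5)$ I would analyse an isomorphism $\Phi:\mathfrak{U}_i(I)\to\mathfrak{U}_i(I')$. It preserves the coradical filtration, so it restricts to an automorphism $\psi$ of $H$ from Table~1, and it sends $M_i\oplus M_i$ in degree one to $(M_i\oplus M_i)^{\psi}$ modulo $H$. Only those $\psi$ with $M_i^{\psi}\cong M_i$ can occur. On the generators, $\Phi$ then has the form
\[
p_j\mapsto a_1p'_j+a_2q'_j,\qquad q_j\mapsto b_1p'_j+b_2q'_j,
\]
possibly composed with the swap $p_1\leftrightarrow p_2$ or a rescaling by a fourth root of unity coming from $\psi$. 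Substituting these into the quadratic relations and comparing coefficients of $1-g$ yields the systems \eqref{(var7)}–\eqref{(var5)}. The different cases, such as the factors $\pm1,\pm\xi$ in part $(5)$ and the $\lambda\leftrightarrow\mu$ swaps in part $(4)$, correspond to the different admissible $\psi$. Conversely, given a solution of one of these systems, the same formulas define an algebra map that preserves the relations and the coproduct, and it is bijective because the coefficient matrix $\begin{pmatrix}a_1&a_2\\ b_1&b_2\end{pmatrix}$ is invertible.

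\textbf{Main obstacle.} The hard part is this last step: determining exactly which automorphisms in Table~1 stabilise each $M_i$, and how they act on the basis $\{p_1,p_2\}$ (diagonally, with roots of unity, or by swapping). This needs a case-by-case computation with the coactions. I would do it by checking the action of $\tau_{17},\tau_{33},\tau_{49}$ and the $\tau$'s with $y\mapsto x^2y$ on the matrices of $M_i$, then reading off the scalar factors that appear in the systems.
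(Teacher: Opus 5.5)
Your strategy is the paper's own: you pass to $\operatorname{gr}A\cong\mathcal{B}(\Omega_i)\sharp H$, lift the quadratic relations as skew-primitives, use $\mathcal{P}_{1,g}(H)=\mathbb{K}\{1-g\}$ together with commutation by $x,y,t$, and finish with a Diamond Lemma count; for (2)--(5) you restrict $\Phi$ to $H$ and write it on degree one by a $2\times2$ matrix. The input this rests on is wrong, however. The eight elements you list (taken from the proof of Theorem~\ref{grA}) do not generate the defining ideal of $\mathcal{B}(\Omega_i)$. Since $\mathcal{B}(M_i)$ has Hilbert series $1+2t+t^2$ and $\mathcal{B}(\Omega_i)\cong\mathcal{B}(M_i)\otimes\mathcal{B}(M_i)$, the Hilbert series of $\mathcal{B}(\Omega_i)$ is $(1+t)^4$. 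So its degree-two relations span $16-6=10$ dimensions, whereas your eight span only $8$. Write $\Omega_i=M_i\otimes\mathbb{K}^2$ and $c=c_{M_i,M_i}$, which is an involution by Lemma~\ref{nicdim}. The degree-two relation space is then $\ker(\mathrm{id}+c)\otimes S^2\mathbb{K}^2\oplus\ker(\mathrm{id}-c)\otimes\Lambda^2\mathbb{K}^2$. In particular $p_1q_1+q_1p_1$ and $p_2q_2+q_2p_2$ vanish separately. When $c$ is minus the flip (as for $M_6$, $M_9$, $M_{10}$), the tenth relation is $p_1q_2+q_2p_1=p_2q_1+q_1p_2$. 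This breaks your Diamond Lemma step: only two of your relations involve the four products $q_kp_j$, so they cannot all be reordered. Concretely, take the trivial lifting $A=\mathcal{B}(\Omega_i)\sharp H$. All scalars vanish, and your presentation gives $R\sharp H$ with $R$ quadratic, $\dim R_2=8$ and $R\twoheadrightarrow\mathcal{B}(\Omega_i)$. Hence $\dim R\geq 18$, and the surjection onto $A$ is not injective.

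The same incompleteness affects your parameter count, which you assert rather than derive. In $\mathfrak{U}_6$, for example, $tp_1=p_1t$ and $tp_2=-p_2t$ (likewise for $q_1,q_2$). So $p_1p_2+p_2p_1$, $q_1q_2+q_2q_1$ and $p_1q_2+q_1p_2+p_2q_1+q_2p_1$ anticommute with $t$, while $1-x^2$ is central; your own commutation test therefore forces $\lambda=\mu=\alpha=0$. Conversely, the coaction of $M_6$ used in the proof of Theorem~\ref{grA} gives $\delta(p_1^2)=\frac12(1+x^2)\otimes p_1^2+\frac12(1-x^2)\otimes p_2^2$. Hence $p_1^2-p_2^2$, $q_1^2-q_2^2$ and the missing $(p_1q_1+q_1p_1)-(p_2q_2+q_2p_2)$ are $(1,x^2)$-skew-primitive and commute with $H$, so they may lift to nonzero multiples of $1-x^2$. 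The algebras your method actually produces are therefore not the $\mathfrak{U}_i(I)$ of the statement, and the systems in (2)--(5) would have to be recomputed from the complete list of lifted relations. The paper's proof has exactly the same omissions, so following it does not close the gap. Finally, the ``if'' directions need $a_1b_2-a_2b_1\neq0$, which ``nonzero parameters'' does not guarantee; conversely, an isomorphism such as the identity has $a_2=b_1=0$.
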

	\begin{proof}
		For $(1)$, We prove the claim for $\Omega_2$. The proofs for others follow the same lines. Let $M_2=\mathbb{K}\{p_1, p_2\}=\mathbb{K}\{q_1, q_2\}$. By Theorem $\ref{grA}$, there exists a Hopf algebra isomorphism $gr A\cong \mathcal{B}(\Omega_2)\sharp H$. Recall that $\mathcal{B}(\Omega_2)\sharp H$ is the algebra generated by $p_1$, $p_2$, $q_1$, $q_2$, $x$, $y$, $t$ with $p_1$, $p_2$, $q_1$, $q_2$, satisfying the relations $\mathcal{B}(\Omega_2)$, $x$, $y$, $t$ satisfying the relations of $H$, and all together satisfying the relations:
			\begin{gather*}
			xp_{1}=p_{1}x,\ yp_{1}=p_{1}y,\
			tp_{1}=-p_1t,\
			xp_{2}=-p_{2}x,\ yp_{2}=p_{2}y,\
			tp_{2}= p_{2}t,\\
			xq_{1}=q_{1}x,\ yq_{1}=q_{1}y,\
			tq_{1}= -q_{1}t,\ xq_{2}=-q_{2}x,\ yq_{2}=q_{2}y,\
			tq_{2}=q_{2}t,
		\end{gather*}
		After a direct computation, we have
		\begin{equation*}
			\begin{split}
			\Delta(p_1^2+p_2^2)&=(p_1^2+p_2^2)\otimes 1+y\otimes (p_1^2+p_2^2),\\
				\Delta(p_1^2-p_2^2)&=(p_1^2-p_2^2)\otimes 1+x^2y\otimes (p_1^2-p_2^2),\\	
					\Delta(p_1p_2+p_2p_1)&=(p_1p_2+p_2p_1)\otimes 1+\frac{1}{2}(1+y+x^2-x^2y)\otimes(p_1p_2+p_2p_1),\\	
			\Delta(p_1q_1{+}q_1p_1{+}p_2q_2{+}q_2p_2)&=(p_1q_1{+}q_1p_1{+}p_2q_2{+}q_2p_2)\otimes 1\\
			&\quad+y\otimes(p_1q_1{+}q_1p_1{+}p_2q_2{+}q_2p_2),\\
			\Delta(p_1q_1{+}q_1p_1{-}p_2q_2{-}q_2p_2)&=(p_1q_1{+}q_1p_1{-}p_2q_2{-}q_2p_2)\otimes 1\\
			&\quad+x^2y\otimes(p_1q_1{+}q_1p_1{-}p_2q_2{-}q_2p_2),\\
			\Delta(p_1q_2{+}q_1p_2{+}p_2q_1{+}q_2p_1)&=(p_1q_2{+}q_1p_2{+}p_2q_1{+}q_2p_1)\otimes 1\\
			&\quad+\frac{1}{2}(1{+}x^2{-}x^2y{+}y)\otimes(p_1q_2{+}q_1p_2{+}p_2q_1{+}q_2p_1).\\
				\end{split}
		\end{equation*}
		Thus $p_1^2=\lambda(1-y)+\mu(1-x^2y)$, $p_2^2=\lambda(1-y)-\mu(1-x^2y)$, for some $\lambda$, $\mu$ $\in \mathbb{K}$, $p_1p_2+p_2p_1=0$.
		Since $x(p_1p_2+p_2p_1)=-(p_1p_2+p_2p_1)x$, relations (\ref{u21}---\ref{u24},) hold in $A$. Thus there is a surjective Hopf homomorphism from $\mathfrak{U_{2}}(J)$ to $A$. We can observe that all elements of $\mathfrak{U}_2(J)$ can be expressed by linear combinations of
			$$
		\{\,p_1^ip_2^jq_1^kq_2^lx^sy^mt^n\mid i,j,k,l,m,n\in \mathbb{I}_{0,1},s\in \mathbb{I}_{0,3}\,\}.
		$$
		In fact, by the Diamond Lemma, the set is a basis of $\mathfrak{U}_2(J)$. Then $A\cong \mathfrak{U}_{2}(J)$.
		
		For $(2)$,	suppose that $\Phi:\mathfrak{U}_{2}(I)\to \mathfrak{U}_{i}(I^{'})$ is an isomorphism of Hopf algebras, where $I^{'}=\{\lambda^{'},\mu^{'},\alpha^{'},\beta^{'},\gamma^{'},\eta^{'}\}$, and $\mathfrak{U}_{i}(I^{'})$ is generated by $x,y,t,p_1^{'},p_2^{'},q_1^{'},q_2^{'}$.When $\Phi|_H \in \{\tau_{1},\tau_{3},\tau_{17},\tau_{19},\tau_{37},\tau_{39},\tau_{53},\tau_{55}\}$, then there exist nonzero parameters $a_1,a_2,b_1,b_2$ such that
		\begin{gather}
			\Phi(p_1)=a_1p^{'}_1+a_2q_1^{'},\ \Phi(p_2)=a_1p^{'}_2+a_2q_2^{'},\\
			\Phi(q_1)=b_1p_1^{'}+b_2q_1^{'},\
			\Phi(q_2)=b_1p_2^{'}+b_2q_2^{'},
		\end{gather}
		then relation (\ref{(var7)}) holds, when $\Phi|_H \in \{\tau_{13},\tau_{14}\}$,
		\begin{gather}
			\Phi(p_1)=a_1p^{'}_1+a_2q_1^{'},\ \Phi(p_2)=\xi(a_1p^{'}_2+a_2q_2^{'}),\\
			\Phi(q_1)=b_1p_1^{'}+b_2q_1^{'},\
			\Phi(q_2)=\xi(b_1p_2^{'}+b_2q_2^{'}),
		\end{gather}
		then relation (\ref{(var8)}) holds.

		For $(3)$, suppose that $\Phi:\mathfrak{U}_{3}(I)\to \mathfrak{U}_{i}(I^{'})$ is an isomorphism of Hopf algebras, where $I^{'}=\{\lambda^{'},\mu^{'},\alpha^{'}\}$, and $\mathfrak{U}_{i}(I^{'})$ is generated by $x,y,t,p_1^{'},p_2^{'},q_1^{'},q_2^{'}$. When $\Phi|_H \in \{\tau_{1},\tau_{3},\tau_{17},\tau_{19},\tau_{33},\tau_{35}$, $\tau_{49},\tau_{51}\}$, then there exist nonzero parameters $a_1,a_2,b_1,b_2$ such that
		\begin{gather}
			\Phi(p_1)=a_1p^{'}_1+a_2q_1^{'},\ \Phi(p_2)=a_1p^{'}_2+a_2q_2^{'},\\
			\Phi(q_1)=b_1p_1^{'}+b_2q_1^{'},\
			\Phi(q_2)=b_1p_2^{'}+b_2q_2^{'},
		\end{gather}
		then relation (\ref{(var6)}) holds.\\
		For $(4)$, suppose that $\Phi:\mathfrak{U}_{6}(I)\to \mathfrak{U}_{i}(I^{'})$ is an isomorphism of Hopf algebras, where $I^{'}=\{\lambda^{'},\mu^{'}\}$, and $\mathfrak{U}_{i}(I^{'})$ is generated by $x,y,t,p_1^{'},p_2^{'},q_1^{'},q_2^{'}$. When $\Phi|_H \in \{\tau_{1},\tau_{3},\tau_{5},\tau_{7},\tau_{9},\tau_{10},\tau_{13}$, $\tau_{14}, \tau_{17},\tau_{19},\tau_{21},\tau_{23},\tau_{25},\tau_{26},\tau_{29},\tau_{30}\}$, then there exist nonzero parameters $a_1,a_2$ such that
		\begin{gather}
			\Phi(p_1)=a_1p^{'}_1,\ \Phi(p_2)=a_1p^{'}_2,\
			\Phi(q_1)=a_2q^{'}_1,\
			\Phi(q_2)=a_2q^{'}_2,
		\end{gather}
		then realtion  (\ref{(var4.1)}) holds,
		or there exist nonzero parameters $a_1,a_2$ such that
		\begin{gather}
			\Phi(p_1)=a_1q^{'}_1,\ \Phi(p_2)=a_1q^{'}_2,\
			\Phi(q_1)=a_2p^{'}_1,\
			\Phi(q_2)=a_2p^{'}_2,
		\end{gather}
		then realtion  (\ref{(var4.2)}) holds, and
		whenhen $\Phi|_H \in \{\tau_{2},\tau_{4},\tau_{6},\tau_{8},\tau_{11},\tau_{12},\tau_{15},\tau_{16},\tau_{18}, \tau_{20},\tau_{22},\tau_{24},\tau_{27}$, $\tau_{28},\tau_{31},\tau_{32}\}$, then there exist nonzero parameters $a_1,a_2$ such that
		\begin{gather}
			\Phi(p_1)=a_1p^{'}_2,\ \Phi(p_2)=-a_1p^{'}_1,\
			\Phi(q_1)=a_2q^{'}_2,\
			\Phi(q_2)=-a_2q^{'}_1,
		\end{gather}
		then realtion  (\ref{(var4.3)}) holds,
		or there exist nonzero parameters $a_1,a_2$ such that
		\begin{gather}
			\Phi(p_1)=a_1q^{'}_2,\ \Phi(p_2)=-a_1q^{'}_1,\
			\Phi(q_1)=a_2p^{'}_2,\
			\Phi(q_2)=-a_2p^{'}_1.
		\end{gather}
		then realtion  (\ref{(var4.4)}) holds.

		For $(5)$, suppose that $\Phi:\mathfrak{U}_{9}(I)\to \mathfrak{U}_{i}(I^{'})$ is an isomorphism of Hopf algebras, where $I^{'}=\{\lambda^{'},\mu^{'},\alpha^{'},\beta^{'}\}$, and $\mathfrak{U}_{i}(I^{'})$ is generated by $x, y, t, p_1^{'}, p_2^{'}, q_1^{'}, q_2^{'}$. When $\Phi|_H \in \{\tau_{1},\tau_{5},\tau_{33},\tau_{37}\}$, then there exist nonzero parameters $a_1,a_2,b_1,b_2$ such that
		\begin{gather}
			\Phi(p_1)=a_1p^{'}_1+a_2q_1^{'},\ \Phi(p_2)=a_1p^{'}_2+a_2q_2^{'},\\
			\Phi(q_1)=b_1p_1^{'}+b_2q_1^{'},\
			\Phi(q_2)=b_1p_2^{'}+b_2q_2^{'},
		\end{gather}
		then relation (\ref{(var2)}) holds; when $\Phi|_H \in \{\tau_{2},\tau_{6},\tau_{34},\tau_{38}\}$,
		\begin{gather}
			\Phi(p_1)=a_1p^{'}_1+a_2q_1^{'},\ \Phi(p_2)=-\xi(a_1p^{'}_2+a_2q_2^{'}),\\
			\Phi(q_1)=b_1p_1^{'}+b_2q_1^{'},\
			\Phi(q_2)=-\xi(b_1p_2^{'}+b_2q_2^{'}),
		\end{gather}
		then relation (\ref{(var3)}) holds; when $\Phi|_H \in \{\tau_{3},\tau_{7},\tau_{35},\tau_{39}\}$,
		\begin{gather}
			\Phi(p_1)=a_1p^{'}_1+a_2q_1^{'},\ \Phi(p_2)=-(a_1p^{'}_2+a_2q_2^{'}),\\
			\Phi(q_1)=b_1p_1^{'}+b_2q_1^{'},\
			\Phi(q_2)=-(b_1p_2^{'}+b_2q_2^{'}),
		\end{gather}
		then relation (\ref{(var4)}) holds;  when $\Phi|_H \in \{\tau_{4},\tau_{8},\tau_{36},\tau_{40}\}$,
		\begin{gather}
			\Phi(p_1)=a_1p^{'}_1+a_2q_1^{'},\ \Phi(p_2)=\xi(a_1p^{'}_2+a_2q_2^{'}),\\
			\Phi(q_1)=b_1p_1^{'}+b_2q_1^{'},\
			\Phi(q_2)=\xi(b_1p_2^{'}+b_2q_2^{'}).
		\end{gather}
		then relation (\ref{(var5)}) holds.

		We complete the proof.
	\end{proof}
	
	\begin{defi}\label{def15}
		Denote by $\mathfrak{U}_{13}(I)$ the algebra  generated by $x,y,t,p_{1},p_{2},q_{1},q_{2}$ satisfying relations (\ref{3.1}), and
		\begin{gather}
			xp_{1}=p_{1}x,\
			yp_{1}=-p_{1}y,\
			tp_{1}=\xi p_{1}x^2t,\
			xp_{2}=p_{2}x,\
			yp_{2}=-p_{2}y,\
			tp_{2}=-\xi p_{2}x^2t,\\
			xq_{1}=- q_{1}x,\
			yq_{1}=q_{1}y,\
			tq_{1}=q_{1}t,\
			xq_{2}=-q_{2}x,\
			yq_{2}=q_{2}y,\
			tq_{2}=-q_{2}t,\\
			p_{1}^2=p_2^2=\lambda(1-x^2),\
			p_{1}p_{2}+p_{2}p_1=0,\\	
			q_{1}^2=q_2^2=\mu(1-x^2),\
			q_{1}q_{2}+q_{2}q_1=\alpha(1-x^2).
		\end{gather}
		It is a Hopf algebra with its coalgebra structure determined by (\ref{3.2}) and
		\begin{equation*}
			\begin{split}
				\Delta(p_1)&=p_1\otimes 1+\frac{1}{2}(\,(1+x^2)y\otimes p_1- (1-x^2)yx\otimes p_2\,),\\
				\Delta(p_2)&=p_2\otimes 1+\frac{1}{2}(\, (1+x^2)y\otimes p_2-(1-x^2)y\otimes p_1\,),\\
				\Delta(q_1)&=q_1\otimes 1+\frac{1}{2}(\,(1+x^2)x\otimes q_1+\xi (1-x^2)x\otimes q_2\,),\\
				\Delta(q_2)&=q_2\otimes 1+\frac{1}{2}(\, (1+x^2)x\otimes q_2-\xi (1-x^2)x\otimes q_1\,).\\
			\end{split}
		\end{equation*}
	\end{defi}
	\begin{defi}\label{def16}
		Denote by $\mathfrak{U}_{14}(\lambda,\mu)$ the algebra  generated by $x,y,t,p_{1},p_{2},q_{1},q_{2}$ satisfying relations (\ref{3.1}), and
		\begin{gather}
			xp_{1}=p_{1}x,\
			yp_{1}=-p_{1}y,\
			tp_{1}=\xi p_{1}x^2t,\
			xp_{2}=p_{2}x,\
			yp_{2}=-p_{2}y,\
			tp_{2}=-\xi p_{2}x^2t,\\
			xq_{1}=- q_{1}x,\
			yq_{1}=q_{1}y,\
			tq_{1}=q_{1} t,\
			xq_{2}=- q_{2}x,\
			yq_{2}=q_{2}y,\
			tq_{2}=-q_{2}t,\\
			p_{1}^2=p_2^2=\lambda(1-x^2),\
			p_{1}p_{2}+p_{2}p_1=0,\\	
			q_{1}^2=q_2^2=\mu(1-x^2),
			q_{1}q_{2}+q_{2}q_1=0.
		\end{gather}
		It is a Hopf algebra with its coalgebra structure determined by (\ref{3.2}) and
		\begin{equation*}
			\begin{split}
				\Delta(p_1)&=p_1\otimes 1+\frac{1}{2}(\,(1+x^2)y\otimes p_1- (1-x^2)y\otimes p_2\,),\\
				\Delta(p_2)&=p_2\otimes 1+\frac{1}{2}(\, (1+x^2)y\otimes p_2-(1-x^2)y\otimes p_1\,),\\
				\Delta(q_1)&=q_1\otimes 1+\frac{1}{2}(\,(1+x^2)y\otimes q_1- (1-x^2)y\otimes q_2\,),\\
				\Delta(q_2)&=q_2\otimes 1+\frac{1}{2}(\, (1+x^2)y\otimes q_2-(1-x^2)y\otimes q_1\,).
			\end{split}
		\end{equation*}
	\end{defi}
	\begin{defi}\label{def17}
		Denote by $\mathfrak{U}_{15}(I)$ the algebra  generated by $x,y,t,p_{1},p_{2},q_{1},q_{2}$ satisfying relations (\ref{3.1}), and
		\begin{gather}
			xp_{1}=p_{1}x,\
			yp_{1}=p_{1}y,\
			tp_{1}=-p_{1}t,\
			xp_{2}=-p_{2}x,\
			yp_{2}=p_{2}y,\
			tp_{2}=p_{2}t,\\
			xq_{1}=q_{1}x,\
			yq_{1}=q_{1}y,\
			tq_{1}=-q_{1}t,\
			xq_{2}=-q_{2}x,\
			yq_{2}=q_{2}y,\
			tq_{2}=q_{2}t,\\
			p_{1}^2=p_2^2=\lambda(1-x^2y),\
			p_{1}p_{2}+p_{2}p_1=0,\\	
			q_{1}^2+q_2^2=\mu(1-y),\
			q_{1}q_{2}+q_{2}q_1=0,\\
			p_1q_1+q_1p_1+p_2q_2+q_2p_2=\alpha(1-x^2),\\
			p_1q_2+q_1p_2+p_2q_1+q_2p_1=0.
		\end{gather}
		It is a Hopf algebra with its coalgebra structure determined by (\ref{3.2})  and
		\begin{equation*}
			\begin{split}
				\Delta(p_1)&=p_1\otimes 1+\frac{1}{2}(\,(1+x^2)xt\otimes p_1+\xi (1-x^2)xyt\otimes p_2\,),\\
				\Delta(p_2)&=p_2\otimes 1+\frac{1}{2}(\,\xi (1-x^2)xyt\otimes p_1+(1+x^2)xyt\otimes p_2\,),\\
				\Delta(q_1)&=q_1\otimes 1+\frac{1}{2}(\,(1+x^2)xyt\otimes q_1- \xi(1-x^2)xt\otimes q_2\,),\\
				\Delta(q_2)&=q_2\otimes 1+\frac{1}{2}(\, (1+x^2)xt\otimes q_2-\xi(1-x^2)xt\otimes q_1\,).
			\end{split}
		\end{equation*}
	\end{defi}
	\begin{defi}\label{def18}
		Denote by $\mathfrak{U}_{16}(\lambda,\mu)$ the algebra  generated by $x,y,t,p_{1},p_{2},q_{1},q_{2}$ satisfying relations (\ref{3.1}), and
		\begin{gather}
			xp_{1}=p_{1}x,\
			yp_{1}=p_{1}y,\
			tp_{1}=- p_{1} t,\
			xp_{2}=-p_{2}x,\
			yp_{2}=p_{2}y,\
			tp_{2}=-p_{2}t,\\
			xq_{1}=q_{1}x,\
			yq_{1}=q_{1}y,\
			tq_{1}=-q_{1} t,\
			xq_{2}=- q_{2}x,\
			yq_{2}=q_{2}y,\
			tq_{2}=-q_{2}t,	\\
			p_{1}^2=\frac{1}{2}\lambda(1-y)+\frac{1}{2}\mu(1-x^2y),p_2^2=\lambda(1-y),\
			p_{1}p_{2}+p_{2}p_1=0,\\	
			q_{1}^2=q_2^2=\mu(1-y),\
			q_{1}q_{2}+q_{2}q_1=0,\\
			p_1q_1+q_1p_1+p_2q_2+q_2p_2=0,\\
			p_1q_2+q_1p_2+p_2q_1+q_2p_1=0.
		\end{gather}
		It is a Hopf algebra with its coalgebra structure determined by (\ref{3.2})  and
		\begin{equation*}
			\begin{split}
				\Delta(p_1)&=p_1\otimes 1+\frac{1}{2}(\,(1+x^2)t\otimes p_1+ (1-x^2)yt\otimes p_2\,),\\
				\Delta(p_2)&=p_2\otimes 1+\frac{1}{2}(\,(1+x^2)yt\otimes p_2-(1-x^2)yt\otimes p_1\,),\\
				\Delta(q_1)&=q_1\otimes 1+\frac{1}{2}(\,(1+x^2)yt\otimes q_1- (1-x^2)t\otimes q_2\,),\\
				\Delta(q_2)&=q_2\otimes 1+\frac{1}{2}(\, (1+x^2)t\otimes q_2+(1-x^2)t\otimes q_1\,).
			\end{split}
		\end{equation*}
	\end{defi}
	\begin{defi}\label{def19}
		Denote by $\mathfrak{U}_{17}(I,\beta)$ the algebra  generated by $x,y,t,p_{1},p_{2},q_{1},q_{2}$ satisfying relations (\ref{3.1}), and
		\begin{gather}
			xp_{1}=-p_{1}x,\
			yp_{1}=p_{1}y,\
			tp_{1}= p_{1}t,\
			xp_{2}=-p_{2}x,\
			yp_{2}=p_{2}y,\
			tp_{2}=- p_{2} x^2t,\\
			xq_{1}=- q_{1}x,\
			yq_{1}=q_{1}y,\
			tq_{1}= q_{1}t,\
			xq_{2}=-q_{2}x,\
			yq_{2}=q_{2}y,\
			tq_{2}= -q_{2} t,\\
			p_{1}^2=p_2^2=0,\
			p_{1}p_{2}+p_{2}p_1=\lambda(1-x^2),\\	
			q_{1}^2+q_2^2=0,\
			q_{1}q_{2}+q_{2}q_1=\mu(1-x^2),\\
			p_1q_1+q_1p_1+p_2q_2+q_2p_2=\alpha(1-x^2y),\\
			p_1q_2+q_1p_2+p_2q_1+q_2p_1=\beta(1-y).
		\end{gather}
		It is a Hopf algebra with its coalgebra structure determined by (\ref{3.2})  and
		\begin{equation*}
			\begin{split}
				\Delta(p_1)&=p_1\otimes 1+\frac{1}{2}(\,(1+x^2)x\otimes p_1- \xi (1-x^2)x\otimes p_2\,),\\
				\Delta(p_2)&=p_2\otimes 1+\frac{1}{2}(\,(1+x^2)x\otimes p_2-\xi(1-x^2)x\otimes p_1\,),\\
				\Delta(q_1)&=q_1\otimes 1+\frac{1}{2}(\,(1+x^2)xy\otimes q_1- \xi (1-x^2)xy\otimes q_2\,),\\
				\Delta(q_2)&=q_2\otimes 1+\frac{1}{2}(\,(1+x^2)xy\otimes q_2+\xi(1-x^2)xy\otimes q_1\,).
			\end{split}
		\end{equation*}
	\end{defi}
	
	\begin{defi}\label{def20}
		Denote by $\mathfrak{U}_{19}(I)$ the algebra generated by $x,y,t,p_{1},p_{2},q_{1},q_{2}$ satisfying relations (\ref{3.1}), and
		\begin{gather}
			xp_{1}=\xi p_{1}x,\quad
			yp_{1}=p_{1}y,\quad
			tp_{1}=- p_{2}t,\quad
			xp_{2}=-\xi p_{2}x,\quad
			yp_{2}=p_{2}y,\quad
			tp_{2}=p_{1}t,\\
			xq_{1}=\xi q_{1}x,\quad
			yq_{1}=q_{1}y,\quad
			tq_{1}= -q_{2} t,\quad
			xq_{2}=-\xi q_{2}x,\quad
			yq_{2}=q_{2}y,\quad
			tq_{2}= q_{1} t,	\\
			p_{1}^2=p_2^2=0,\quad
			p_{1}p_{2}+p_{2}p_1=\lambda(1-y),\\		q_{1}^2=q_2^2=0,\quad
			q_{1}q_{2}+q_{2}q_1=\mu(1-y),\\
			p_1q_1+q_1p_1+p_2q_2+q_2p_2=\alpha(1-y),\\
			p_1q_2+q_1p_2+p_2q_1+q_2p_1=0.
		\end{gather}
		It is a Hopf algebra with its coalgebra structure determined by (\ref{3.2})  and
		\begin{equation*}
			\begin{split}
				\Delta(p_1)&=p_1\otimes 1+x^2\otimes p_1,\\
				\Delta(p_2)&=p_2\otimes 1+x^2y\otimes p_2,\\
				\Delta(q_1)&=q_1\otimes 1+x^2y\otimes q_1,\\
				\Delta(q_2)&=q_2\otimes 1+x^2\otimes q_2.
			\end{split}
		\end{equation*}
	\end{defi}
	
	\begin{pro}
		$(1)$	Assume that  $A$ is a finite-dimensional Hopf algebra with the coradical $H$ such that  the infinitesimal braiding is isomorphic to $\Omega_{i}$ for some $i\in \{13$,  $14$,  $15$, $16$, $17$, $19\}$, then $A\cong \mathfrak{U}_{i}(I)$.
		
		$(2)$ For $i\in\{13,14,15,16,17,19\}$, $\mathfrak{U}_{i}(I)\cong \mathfrak{U}_{i}(I^{'})$ if and only if there exist nonzero parameters $a_1$, $a_2$ such that
		\begin{gather}\label{(var1)}
			a_1^2\lambda^{'}=\lambda,\ a_2^2\mu^{'}=\mu,\ a_2^2\alpha^{'}=\alpha,
		\end{gather}
		or satisfying
		\begin{gather}
			a_1^2\lambda^{'}=\lambda,\ a_2^2\mu^{'}=-\mu,\ a_2^2\alpha^{'}=-\alpha,
		\end{gather}
		or satisfying
		\begin{gather}
			a_1^2\lambda^{'}=-\lambda,\ a_2^2\mu^{'}=\mu,\ a_2^2\alpha^{'}=\alpha,
		\end{gather}
		or satisfying
		\begin{gather}
			a_1^2\lambda^{'}=-\lambda,\ a_2^2\mu^{'}=-\mu,\ a_2^2\alpha^{'}=-\alpha,
		\end{gather}
		or satisfying
		\begin{gather}
			a_1^2\lambda^{'}=\lambda,\ a_2^2\mu^{'}=\mu,\ a_2^2\alpha^{'}=-\alpha,
		\end{gather}
		or satisfying
		\begin{gather}
			a_1^2\lambda^{'}=-\lambda,\ a_2^2\mu^{'}=\mu,\ a_2^2\alpha^{'}=-\alpha,
		\end{gather}
		or satisfying
		\begin{gather}
			a_1^2\lambda^{'}=-\lambda,\ a_2^2\mu^{'}=-\mu,\ a_2^2\alpha^{'}=\alpha,
		\end{gather}
		or satisfying
		\begin{gather}
			a_1^2\lambda^{'}=\lambda,\ a_2^2\mu^{'}=-\mu,\ a_2^2\alpha^{'}=\alpha.
		\end{gather}
			\end{pro}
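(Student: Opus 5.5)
We follow the pattern of the proofs of the previous propositions (for $\Omega_{1,i}$ and Proposition \ref{6u2}). For part $(1)$, fix $i\in\{13,14,15,16,17,19\}$ and write the two simple summands of the infinitesimal braiding as $\mathbb{K}\{p_1,p_2\}$ and $\mathbb{K}\{q_1,q_2\}$. By Theorem \ref{grA}, $\operatorname{gr}A\cong\mathcal{B}(\Omega_i)\sharp H$. So $A$ is generated by $H$ and lifts of $p_1,p_2,q_1,q_2$. These lifts satisfy the cross relations with $x,y,t$ read off from the module structures of the summands (Section 4), and have the coproducts listed in the corresponding Definition.

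The substantive step is to lift each defining relation $r$ of $\mathcal{B}(\Omega_i)$: the quadratic relations $p_1^2$, $p_2^2$, $p_1p_2+p_2p_1$, the analogous ones in $q$, and the two mixed relations. We will recombine them into sums and differences such as $p_1^2\pm p_2^2$, as was done for $\Omega_2$. The aim is that each recombined element $r$ satisfies
\[
\Delta(r)=r\otimes 1+g\otimes r \quad\text{with } g\in G(H).
\]
Then $r\in\mathcal{P}_{1,g}(A)\cap A_1$. Since $\mathcal{P}_{1,g}(H)=\mathbb{K}\{1-g\}$ and the degree-two component of $\operatorname{gr}A$ contains no nontrivial $(1,g)$-primitives, we get $r=c(1-g)$ for some $c\in\mathbb{K}$.

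Next we test these equalities against the commutation relations with $x$, $y$ and $t$. If $x r=-rx$ or $t r=-rt$ while $1-g$ is central, the scalar $c$ must vanish. This is how the zero relations such as $p_1p_2+p_2p_1=0$, or the vanishing mixed relations, are forced. The surviving scalars are exactly the parameters $I$ (resp. $(I,\beta)$, or $(\lambda,\mu)$) in Definitions \ref{def15}--\ref{def20}.

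This gives a surjective Hopf map $\mathfrak{U}_i(I)\to A$. We then check, via the Diamond Lemma, that
\[
\{p_1^ap_2^bq_1^cq_2^dx^sy^mt^n \mid a,b,c,d,m,n\in\mathbb{I}_{0,1},\ s\in\mathbb{I}_{0,3}\}
\]
is a basis of $\mathfrak{U}_i(I)$. Its cardinality $16\cdot16=\dim\mathcal{B}(\Omega_i)\sharp H=\dim A$, so the surjection is an isomorphism.

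For part $(2)$, let $\Phi:\mathfrak{U}_i(I)\to\mathfrak{U}_i(I')$ be a Hopf isomorphism. Then $\Phi$ preserves the coradical, so $\Phi|_H=\tau_j$ for some automorphism $\tau_j$ in Table 1. Also $\Phi$ induces an isomorphism of the infinitesimal braidings $\Omega_i\to\Omega_i^{\tau_j}$. The two summands of $\Omega_i$ are non-isomorphic simples, with distinct comodule types. Hence $\Phi$ must map $\mathbb{K}\{p_1,p_2\}$ to $\mathbb{K}\{p_1',p_2'\}$ and $\mathbb{K}\{q_1,q_2\}$ to $\mathbb{K}\{q_1',q_2'\}$. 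By Schur's lemma it acts on each summand by a scalar, $a_1$ respectively $a_2$, possibly twisted by a fixed sign or permutation $p_1\mapsto p_2$ determined by $\tau_j$.

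We sort the elements of $\operatorname{Aut}(H)$ preserving the relevant comodule structures into classes, according to whether $\tau_j$ fixes or negates the group-likes $1-x^2$, $1-y$, $1-x^2y$ appearing on the right-hand sides. Applying $\Phi$ to the quadratic relations then yields the eight sign patterns listed, namely $a_1^2\lambda'=\pm\lambda$, $a_2^2\mu'=\pm\mu$, $a_2^2\alpha'=\pm\alpha$. Conversely, given such scalars, defining $\Phi$ by these formulas respects all relations and coproducts, so it is an isomorphism.

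The main obstacle is the case analysis in $(2)$. We have to determine, for each of the 64 automorphisms in Table 1, whether $\Omega_i^{\tau_j}\cong\Omega_i$ and which sign twist on the $p$- and $q$-summands it forces. We also need to verify that no mixing between the summands occurs, which is where the distinct coactions are essential. In $(1)$, the delicate computation is the coproducts of the mixed relations, which we will check directly from the coaction formulas.
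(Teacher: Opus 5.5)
Your strategy is the paper's: lift the quadratic relations as skew-primitives, kill scalars by commuting with $x$ and $t$, then use the Diamond Lemma and a dimension count; for (2), restrict $\Phi$ to $H$ and read off its action on the generators. However, the step that is supposed to make (2) work is false. From the two summands being non-isomorphic with ``distinct comodule types'' you conclude that $\Phi$ maps $\mathbb{K}\{p_1,p_2\}$ into $\mathbb{K}\{p_1',p_2'\}$. But $\Phi$ is only $\tau_j$-semilinear on $V=\Omega_i$. It is an isomorphism of Yetter--Drinfeld modules from $\Omega_i$ onto the twisted module $\Omega_i^{\tau_j}$ of \cite[Lemma 6.1]{AS0}. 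So Schur's lemma only sends each summand to the summand of the twist isomorphic to it, and twisting can exchange the two summands.

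This exchange actually occurs. Items (4)--(5) of the Corollary in Section 6 give $M_1^{\tau_{49}}\cong M_8$, $M_2^{\tau_{49}}\cong M_4$, $M_3^{\tau_{55}}\cong M_5$ and $M_6^{\tau_{49}}\cong M_7$, with $\tau_{49},\tau_{55}$ involutions. Hence for $M_1\oplus M_8$, $M_2\oplus M_4$, $M_3\oplus M_5$, $M_6\oplus M_7$ (cases $i=14,15,16,17$) there are isomorphisms with $\Phi(p_a)\in\mathbb{K}\{q_1',q_2'\}$ and $\Phi(q_a)\in\mathbb{K}\{p_1',p_2'\}$. The same happens for $i=19$. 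There $M_9=U_{1,0,0,2}$ and $M_{10}=U_{1,0,1,0}$ have the same $H$-action and are both $\mathbb{K}_{x^2}\oplus\mathbb{K}_{x^2y}$ as comodules. They differ only in which $x$-eigenvector ($\xi$ or $-\xi$) carries which coaction, and $\tau_{17}$ ($x\mapsto x^3$) swaps exactly that. Likewise $M_1=V_{0,1,2,0}$ and $M_8=V_{1,1,2,0}$ have identical coactions, since the comodule structure of $V_{i,j,k,l}$ depends only on $(k,l)$. So ``distinct comodule types'' is also false.

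Exchanging isomorphisms give conditions of the shape $\lambda=\pm a_1^2\mu'$, $\mu=\pm a_2^2\lambda'$ (compare the second and fourth alternatives in Proposition \ref{6u2}(4)). These tie together parameters of different summands and are not of the form of any of the eight patterns. Your case analysis omits them, so the ``only if'' direction is not established. Your no-mixing claim is true only for $i=13$: $y$ acts by $-1$ on $M_1$ and by $+1$ on $M_6$, $x^2$ acts trivially on both, and every $\tau_j$ has $\tau_j(y)\in\{y,x^2y\}$. That is the one case the paper's proof actually carries out. Note also that no automorphism of $H$ ``negates'' $1-g$; it sends it to $1-\tau_j(g)$, possibly exchanging $1-y$ and $1-x^2y$. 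The signs come from the intertwiner, e.g.\ $\Phi(p_2)=-a_1p_2'$ when $\Phi|_H=\tau_{13}$.

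In (1), you assert rather than check that ``the surviving scalars are exactly the parameters $I$'', and carrying out your own recipe does not reproduce the Definitions. For $i=13$ the $p$-summand is $M_1$. The computation in the proof for $\Omega_1$ (after Definition \ref{def9}) gives $\Delta(p_1^2)=p_1^2\otimes 1+e\otimes p_1^2+f\otimes p_2^2$, where $e=\frac12(1+x^2)$ and $f=\frac12(1-x^2)$. Thus $p_1^2+p_2^2$ is primitive, hence $0$. Also $p_1^2-p_2^2\in\mathbb{K}(1-x^2)$ is killed by $tp_1^2=-p_1^2t$. So $p_1^2=p_2^2=0$ in $A$. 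Conversely, in $\mathfrak{U}_{13}(I)$ the relations $p_1^2=\lambda(1-x^2)$ and $tp_1=\xi p_1x^2t$ force $2\lambda(1-x^2)t=0$. For $\lambda\neq 0$ that algebra is therefore not $256$-dimensional, and your Diamond Lemma step fails.

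There are also four mixed relations, not two. $\mathcal{B}(\Omega_i)\cong\mathcal{B}(M)\otimes\mathcal{B}(M')$ has a $6$-dimensional degree-two part, so the defining ideal has ten quadratic generators, including one braided commutator for each pair $(p_a,q_b)$. Without all four your PBW set does not even span, since $q_bp_a$ cannot be rewritten.
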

		\begin{proof}
			For $(1)$,  the proof is analogous to that of the previuos Propostion \ref{6u2}.
			
			For $(2)$, suppose that $\Phi:\mathfrak{U}_{13}(I)\to \mathfrak{U}_{i}(I^{'})$ is an isomorphism of Hopf algebras, where $I^{'}=\{\lambda^{'},\mu^{'},\alpha^{'}\}$, and $\mathfrak{U}_{i}(I^{'})$ is generated by $x,y,t,p_1^{'},p_2^{'},q_1^{'},q_2^{'}$. When $\Phi|_H \in \{\tau_{1},\tau_{3}\}$, then there exist nonzero parameters $a_1,a_2$ such that
			\begin{gather}
				\Phi(p_1)=a_1p^{'}_1,\ \Phi(p_2)=a_1p^{'}_2,\ \Phi(q_1)=a_2q^{'}_1,\  \Phi(q_2)=a_2q^{'}_2,
			\end{gather}
			thus relation (\ref{(var1)}) holds; when $\Phi|_H\in \{\tau_{13},\tau_{14}\}$,
			\begin{gather}
				\Phi(p_1)=a_1p^{'}_1,\ \Phi(p_2)=-a_1p^{'}_2,\ \Phi(q_1)=a_2q^{'}_1,\hspace{1em}\Phi(q_2)=a_2q^{'}_2,
			\end{gather}
			then  relation (\ref{(var1)}) holds.	
			
			For  the other cases, the proofs are completely analogous.
		\end{proof}
		
		\noindent
		\textbf{Proof of Theorem B}: Let $N$ be one of the Yetter-Drinfeld modules listed in
		$(1)$, $(7)$, $(8)$, $(9)$, $(10)$ of \textbf{Theorem A}. Let $A$ be a finite-dimensional Hopf algebra over $H$ such
		that its infinitesimal braiding is isomorphic to $N$. By Theorem 6.1, $gr \,A\cong \mathcal{B}(N)\sharp H$.
		Combining Propositions 6.12, 6.14, 6.20, 6.27 together, this completes the proof.\qed

	\section*{Acknowledgment}
	 Y.H. Wu gratefully acknowledges the financial support from East China Normal University, also thanks Prof. Yinhuo Zhang for his guidance and valuable comments during her visit to Hasselt University in Belgium. She is grateful to  Dr. Yiwei Zheng  for helpful comments and suggestions on the manuscript, and to Dr. Rongchuan Xiong for sharing relevant references, and to Dr. Yuxing Shi for providing GAP computations related to Nichols algebras. H.Y. Wang appreciates the support of the China Scholarship Council (CSC. No. 202506140042) for his visit to Université Paris Cité.  N.H. Hu is supported by the NNSF of China (Grant No. 12171155), and in part by
	the Science and Technology Commission of Shanghai Municipality (Grant No. 22DZ2229014).


\begin{thebibliography}{100}
		\bibitem{AS} N. Andruskiewitsch and H.-J. Schneider, \textit {Lifting of quantum linear spaces and pointed Hopf algebras of
			order $p^3$}, J. Algebra 209 (2) (1998), 658---691.
		
		\bibitem{AS0} N. Andruskiewitsch and H.-J. Schneider, \textit {Finite quantum groups and Cartan matrices}, Adv. Math. 154 (1) (2000), 1---45.
		
		
		\bibitem{AS2} N. Andruskiewitsch and H.-J. Schneider, \textit {Pointed Hopf algebras}, New directions in Hopf algebras, Math. Sci. Res. Inst. Publ., 43, Cambridge Univ. Press, Cambridge, 2002, 1---68.
		
		\bibitem{AS07} N. Andruskiewitsch and S. Zhang, \textit {On pointed hopf algebras associated to some conjugacy classes in $S_n$}, Proc. Amer. Math. Soc. 135 (2007), 2723-2731.
		
		
		
		\bibitem{AFG10} N. Andruskiewitsch, F. Fantino, M. Gra\~{n}a and L. Vendramin, \textit {Pointed Hopf algebras over some sporadic
			simple groups}, C. R. Math. Acad. Sci. Paris 348 (2010), 605---608.
		
		\bibitem{AHS} N. Andruskiewitsch, I. Heckenberger and H.-J. Schneider, \textit {The Nichols algebra of a semisimple Yetter-Drinfeld module}, Amer. J. Math. 132 (6) (2010), 1493---1547.
		
		\bibitem{AS3} N. Andruskiewitsch and H.-J. Schneider \textit {On the classification of finite-dimensional pointed Hopf algebras}, Ann. of Math. 171 (1) (2010), 375---417.
		
		\bibitem{AV} N. Andruskiewitsch and C. Vay, \textit {Finite dimensional Hopf algebras over the dual group algebra of the symmetric
			group in three letters}, Comm. Algebra 39 (12) (2011), 4507---4517.
		\bibitem{AFG11} N. Andruskiewitsch, F. Fantino, M. Gra\~{n}a and L. Vendramin, \textit {Finite-dimensional pointed Hopf algebras
			with alternating groups are trivial}, Ann. Mat. Pura Appl. (4) 190 (2) (2011), 225---245.
		
		
		\bibitem{AI} N. Andruskiewitsch, I. Angiono, A. G. Iglesias, A. Masuoka and C. Vay, \textit {Lifting via cocycle deformation},
		J. Pure Appl. Algebra 218 (4) (2014), 684---703.
		
		\bibitem{ACG15a} N. Andruskiewitsch, G. Carnovale and G. A. Garc\'{i}a, \textit {Finite-dimensional pointed Hopf algebras over finite
			simple groups of Lie type I. Non-semisimple classes in $\text{PSL}_n(q)$}, J. Algebra 442 (2015), 36---65.
		
		\bibitem{ACG15b} N. Andruskiewitsch, G. Carnovale and G. A. Garc\'{i}a, \textit {Finite-dimensional pointed Hopf algebras over finite
			simple groups of Lie type II: unipotent classes in symplectic groups}, Commun. Contemp. Math.
		18 (4) (2016), 1550053, 35.
		
		\bibitem{AAI} N. Andruskiewitsch, I. Angiono and A. G. Iglesias, \textit {Liftings of Nichols algebras of diagonal type I. Cartan
			type $A$}, Int. Math. Res. Not. (9) (2017), 2793---2884.
		\bibitem{AGM15} N. Andruskiewitsch, C. Galindo and M. M\"{u}ller, \textit {Examples of finite-dimensional Hopf algebras with the
			dual Chevalley property}, Publ. Mat., 61 (2) (2017): 445---474.
		
		\bibitem{AGi2018} N. Andruskiewitsch and J. M. J. Giraldi, \textit {Nichols algebras that are quantum planes}, Linear and Multilinear Algebra 66 (5) (2018), 961---991.
		
		
		
		\bibitem{AHV24} N. Andruskiewitsch, I. Heckenberger, and L. Vendramin, \textit {Pointed Hopf algebras of odd dimension and Nichols algebras over solvable groups}, arXiv.2411.02304.
		
		\bibitem{G} G. M. Bergman, \textit {The diamond lemma for ring theory}, Adv. in Math. 29 (2) (1978), 178---218.
		
		
		\bibitem{CS} C. C\v{a}linescu, S. D\v{a}sc\v{a}lescu, A. Masuoka and C. Menini, \textit {Quantum lines over non-cocommutative
			cosemisimple Hopf algebras}, J. Algebra 273 (2) (2004), 53---779.
		
		
		\bibitem{FG} F. Fantino and G. A. Garc\'{i}a, \textit {On pointed Hopf algebras over dihedral groups}, Pacific J. Math. 252 (1) (2011), 69---91.
		
		\bibitem{X} X. Fang, \textit {On defining ideals and differential algebras of Nichols algebras}, J. Algebra 346 (2011), 299---331.
		
		\bibitem{GG16} G. A. Garc\'{i}a and J. M. J. Giraldi, \textit {On Hopf algebras over quantum subgroups}, J. Pure Appl. Algebra 223 (2) (2019), 738---768.
		
		
		
		\bibitem{GGI} G. A. Garc\'{i}a and A. G. Iglesias, \textit {Finite-dimensional pointed Hopf algebras over $\mathbb{S}_4$},
		Israel J. Math. 183 (2011), 417---444.
		
		\bibitem{Gn00} M. Gra\~{n}a, \textit {A freeness theorem for Nichols algebras}, J. Algebra 231 (1) (2000), 235---257.
		
		\bibitem{H06} I. Heckenberger, \textit {The Weyl groupoid of a Nichols algebra of diagonal type}, Invent. Math. 164 (2006), 175---188.
		
		\bibitem{H09} I. Heckenberger, \textit {Classification of arithmetic root systems}, Adv. Math. 220 (1) (2009), 59---124.
		
		\bibitem{HS}   I. Heckenberger and H.-J. Schneider, \textit {Nichols algebras over groups with finite root system of rank two I}, J. Algebra, 324 (11) (2010), 3090---3114.
		
		
		\bibitem{HEL24}  I. Heckenberger, E. Meir and L. Vendramin, \textit {Finite-dimensional Nichols algebras of simple Yetter-Drinfeld modules (over groups) of prime dimension}, Adv.  Math.
		444 (2024), 109637, 30.
		
		\bibitem{HX16} N. H. Hu and R. C. Xiong, \textit {Some Hopf algebras of dimension $72$ without the Chevalley property}, arXiv:1612.0498.
		
		\bibitem{HX18} N. H. Hu and R. C. Xiong, \textit {On families of Hopf algebras without the dual Chevalley property}, Rev. Un. Mat. Argentina 59 (2) (2018), 443---469.
		
		
		
		\bibitem{GIV} A. G. Iglesias and C. Vay, \textit {Finite-dimensional pointed or copointed Hopf algebras over affine racks}, J.
		Algebra 397 (2014), 379---406.
		
		\bibitem{K} I. Kaplansky, \textit {Bialgebras}, Lecture Notes in Mathematics. Department of Mathematics, University of
		Chicago, Chicago, Ill., 1975.
		
		\bibitem{K00} Y. Kashina, \textit {Classification of semisimple Hopf algebras of dimension $16$}, J. Algebra 232 (2) (2000), 617---663.
		
		
		\bibitem{CK} C. Kassel, \textit{Quantum Groups}, Graduate Texts in Mathematics,
		Vol. 155, Springer-Verlag, New York/Berlin, 1995.
		
		
		\bibitem{M} S. Montgomery, \textit {Hopf Algebras and their Actions on Rings}, CMBS Reg. Conf. Ser. in Math. 82, Amer. Math. Soc., 1993.
		
		
		\bibitem{R85} D. E. Radford, \textit {The structure of Hopf algebras with a projection}, J. Algebra 92 (2) (1985), 322---347.
		
		\bibitem{R}D. E. Radford, \textit {Hopf Algebras}, Knots and Everything 49, World Scientific, 2012.
		
		\bibitem{S16} Y. X. Shi, \textit {Finite-dimensional Hopf algebras over the Kac-Paljutkin algebra $H_8$}, Rev. Un. Mat. Argentina 60 (1) (2019), 265---298.
		
		\bibitem{X19} R. C. Xiong, \textit{On Hopf algebras over the unique 12-dimensional Hopf algebra without the dual Chevalley property}, Comm. Algebra 47 (2019), 1516---1540.
		
		\bibitem{X192} R. C. Xiong, \textit{Some classification results on finite-dimensional Hopf algebras}, Ph. D. Dissertation of East China Normal University (pp. 347), May, 2019.
		
		
		\bibitem{X21} R. C. Xiong, N. H. Hu, \textit{Classification of finite-dimensional Hopf algebras over dual Radford algebras}, Bull. Belg. Math. Soc. Simon Stevin 28 (5) (2021), 633---688.
		\bibitem{X23} R. C. Xiong, \textit{On Hopf algebras over basic Hopf algebras of dimension 24}, to appear in Rev. Un. Mat. Argentina. 65 (2) (2023), 469---493.
		
		
		\bibitem{X232} R. C. Xiong,  \textit{Pointed Hopf algebras of dimension $p^2q$ in characteristic $p$}, J. Algebra 631 (2023), 355---400.
		
		
		\bibitem{X24} R. C. Xiong, \textit{Finite-dimensional Hopf algebras over the smallest non-pointed basic Hopf algera}, Front. Math. 19 (5) (2024), 865---890.
		
		
		
		\bibitem{SW} M. Sweedler, \textit {Hopf Algebras}, Benjamin, New York, 1969.
		
		\bibitem{Z211} Y. W. Zheng, Y. Gao and N. H. Hu, \textit{Finite-dimensional Hopf algebras over the Hopf algebra $H_{b:1}$ of Kashina}, J. Algebra 567 (1) (2021), 613---659.
		
		
		\bibitem{Z212} Y. W. Zheng, Y. Gao and N. H. Hu,  \textit{Finite-dimensional Hopf algebras over the Hopf algebra $H_{d:-1,1}$ of Kashina}, J. Pure Appl. Algebra 225 (4) (2021), 106527.
		
		\bibitem{Z23} Y. W. Zheng, Y. Gao,  N. H. Hu and  Y. X. Shi, \textit{On some classification of finite-dimensional Hopf algebras over the Hopf algebras $H_{b:1*}$} of Kashina, Commu. Algebra 51 (2023), 350---371.
		
		
		\bibitem{Z25} Y. W. Zheng, X. L. Yu, \textit{Some classifications of finite-dimensional Hopf algebras over $H_{a:y}$ of Kashina}, J. Algebra Appl. 24 (11) (2025), 2550257.
		
		
	\end{thebibliography}
\end{document}